\newtheorem{thm}{Theorem}[section]
\newtheorem{lemma}[thm]{Lemma}
\newtheorem{prop}[thm]{Proposition}
\definecolor{ultragreen}{RGB}{24,120,50} \theoremstyle{remark}
\newtheorem{example}[thm]{\color{ultragreen}Example}
\theoremstyle{plain}
\newtheorem{defn}[thm]{Definition}
\newtheorem{rmk}[thm]{Remark}
\newtheorem{remark}[thm]{Remark}
\newcommand{\target}{\mathrel{\bigcirc\hspace{-2.7mm}\bullet}}
\numberwithin{thm}{section}
\newcommand{\close}{\circlearrowright\! }
\newcommand{\rank}{\mathcal{W}}
\newcommand{\rmm}{\mathcal{M}}
\newcommand{\drm}{\mathcal{M}}
\newcommand{\lloop}{\mathrm{loop}}
\newcommand{\rloop}{\mathrm{loop}_{t}}
\newcommand{\mup}{\mathrm{U}}
\newcommand{\tf}{T_\updownarrow}
\newcommand{\tfm}{\begin{bmatrix}1&-1\\1&0\end{bmatrix}}
\newcommand{\mdo}{\mathrm{D}}
\newcommand{\mupm}[1]{\begin{bmatrix}#1 &1\\0&1 \end{bmatrix}}
\newcommand{\mdom}[1]{\begin{bmatrix} 1+ #1 & -#1\\1&0 \end{bmatrix}}
\newcommand{\definition}[1]{{\color{coral}\emph{#1}}}
\newcommand{\ds}{\displaystyle}
\definecolor{coral}{RGB}{245, 93, 159}
\definecolor{magenta}{RGB}{164, 93, 245}
\definecolor{teal}{RGB}{28, 157, 186}
\definecolor{pea}{RGB}{118, 186, 28}
\definecolor{violet}{RGB}{138,43,226}
\title[ Cluster Expansions]{Cluster Expansions: T-walks, Labeled Posets and Matrix Calculations}
\author{Ezgi Kantarcı Oğuz and Emine Yıldırım}
\begin{document}
\maketitle

\begin{abstract}
   We give two new combinatorial methods for computing cluster expansion formulas for arcs coming from possibly punctured surfaces. The first is by using \emph{T-walks}, an extension of $T$-path models from~\cite{S10,ST09} for unpunctured surfaces to general surfaces. To do so, we introduce a new combinatorial way to generate these paths. The second is by using order ideals of labeled posets associated to arcs. In this context, we use the methods introduced in~\cite{O22,OR21} to give a quick way to calculate the expressions using $2$ by $2$ matrices. The techniques introduced are applicable to different settings in cluster algebras and beyond. 
\end{abstract}

\tableofcontents

\section{Introduction}

Cluster algebras were introduced by Fomin-Zelevinsky~\cite{FZ02} in early 2000 in the study of Lusztig's dual canonical bases and total positivity in semisimple groups. They are commutative rings with a distinguished set of generators called \definition{cluster variables}. Usually rings are given by set of generators and relations, however cluster algebras are given by a set of \definition{initial cluster variables} and an iterative rule, called \definition{mutation}, to obtain the rest of generators. Cluster algebras can be defined in many different ways such as using matrices, directed graphs which we call quivers, surfaces and are widely studied in different areas~\cite{BMRRT06, CCS06, FG06, FG09, FST08, FST12, FZ02, FZ07, GLFS22}. We will work with the surface definition following~\cite{FST08} which initiated the study of cluster algebras arising from triangulations of a surface. A triangulation gives a set of initial cluster variables and mutation can be seen as going from a triangulation to another by flipping arcs. Thus, cluster variables will be arcs in the surface we work with. Since the invention of cluster algebras, people are interested in trying to give direct and easier formulae to find the expansion of cluster variables from the initial ones~\cite{BMR09, BK20, BZ11, CK06, CK08, CL12, CS13, CS15, CT19, GLFS22, GM15, L09, MP07, M11, P08, Ra18, Z07}. This is especially important to understand Lusztig's dual canonical bases elements in terms of cluster algebra elements. Thanks to the fact that the cluster variables corresponding to the arcs can be interpreted as certain "\definition{lambda length}" in the decorated Teichmüller space~\cite{FG06, FG09, FT18}, one can express the expansions of cluster variables as a product of matrices in $PSL_2(\mathbb{R})$. In~\cite{MW13}, Musiker-Williams associate products of matrices $PSL_2(\mathbb{R})$ to the \emph{generalized} arcs and closed loops. Our approach in this paper again uses matrices from $PSL_2(\mathbb{R})$ but we have a different approach to compute the expansions.

The idea of calculating the expansion formulae via matrices is not a new, in fact it might be thought of as predating the combinatorial methods such as looking at matchings in snake graphs or ideals of posets. It has not been the go-to method for doing the calculations however, possibly because the framework of snake graphs proved more accessible. In the meantime, with the definition of the new $q$-deformations of rational numbers by Morier-Geoaud and Ovsienko \cite{MG020}, the combinatorics of fence posets and corresponding polynomials came into a new attention of mathematical community, prompting new works on the combinatorical aspects and calculation methods (\cite{McCSS21},\cite{O22},\cite{OR21}). This work aims to bring these developments back into the cluster setting and give a matrix characterization that can be fully visualized as building posets step by step. The biggest advantage of this method is that the ideas are easy to extend to new settings in cluster algebras and beyond. We will, in addition to the classical cases of snake and band graph, give a characterization of calculating expansion formulae for Wilson's loop graphs \cite{wilson} using matrices. We believe that the methods we use can be adapted to the framework of these papers~\cite{MOZ21, MOZ22} and plan to pursue this in the future.

Schiffler and Thomas \cite{ST09} gave expansion formulae of cluster variables using certain paths, (complete) $T$-paths, on a triangulation of an unpunctured surface. Schiffler in \cite{S10} generalized this to the cluster algebras with coefficients again in the unpunctured setting. Later on, Gunawan and Musiker \cite{GM15} studied these path for a surface only with one puncture. There are further works in the literature about $T$-paths, see for instance \cite{CJ20,BCKZ22}. We consider a generalization of $T$-paths associated to generalized arcs on all surfaces possibly with punctures in this paper. Moreover, our construction includes the computation of coefficients in the cluster algebra we work with. As already in literature, there is a bijection between perfect matchings and $T$-paths for the unpunctured surface is shown by Musiker-Schiffler \cite{MS10}, we furthermore show that this bijection holds in general. 


We start by recalling some well-known constructions in Section~\ref{sec:prelim}. Then in Section~\ref{sec:Twalks} we give our first expansion formula using \emph{$T$-walks}:

{\bf Theorem~\ref{thm:texpansion}} The cluster expansion formula for an arc $\gamma$ (possibly ordinary, closed, singly or doubly notched) on a punctured surface can be calculated via T-walks as follows:
\begin{align*}x_{\gamma}=\displaystyle  \sum_{T_{\vec{v}} \in \mathrm{TW}(\gamma)} x(T_{\vec{v}})y(T_{\vec{v}}).
\end{align*} 

$T$-walks are quite similar in flavor to $T$-paths; for $T$-walks some repetitions are allowed to accommodate the notched cases. In fact, by removing these repetitions in a $T$-walk, we obtain the corresponding unique $T$-path (Proposition~\ref{T-bijection}). 

In Section~\ref{sec:labeled_posets} we give another formulation; this time in terms of labeled posets. Calculating cluster expansions in this manner is a natural and straightforward approach. In essence, we directly extract the combinatorial information required for the expansion from the surface, bypassing the need to transform it into snake graphs as an intermediary step.

{\bf Theorem.~\ref{thm:poset extension}} Let $P_\gamma$ be the labeled poset associated to an arc $\gamma$ (possibly ordinary, closed, singly or doubly notched). Then the expansion of $x_{\gamma}$ is given by:
\begin{align*}x_{\gamma}=\displaystyle \frac{x(M_{-})}{cross(T,\gamma)} \, \, \rank(P_\gamma;xy)
\end{align*}

In Subsection~\ref{subsec:proof}, we explore how two different methods of computing cluster expansions are related. Specifically, even though our two models may seem quite distinct, there is a very natural bijection between them (See Theorem~\ref{thm:bijection}).

We want to mention that following the initial sharing of this work on the arXiv, there has been increased interest in utilizing posets for computing cluster expansions. In particular, in a recent paper~\cite{PRS23} by Pilaud, Reading and Schroll, they write cluster variables using labeled posets in a complete general setting. They provide proofs which uses hyperbolic structure of the surface. Moreover, labeled posets appear in~\cite{W23} to study certain Donaldson-Thomas $F$-polynomials.

The rest of this work is devoted to giving matrix methods to efficiently compute the cluster expansion via matrices.  For this, we make use of  \emph{oriented posets} a combinatorial tool recently defined by Kantarcı~Oğuz. In Section ~\ref{sec:matrix}, after recalling the necessary notation from \cite{O22}, we give efficient matrix formulas for the different types of arcs we are considering. To illustrate our matrices in action, we include full calculations for different types of arcs in the appendix. 

We further want to note that, though all our methods work for the degenerate case where the initial triangulation includes some self-folded triangles, there are some technicalities involved that we chose to address in a separate section. See Section~\ref{sec:selffold} for  details on how the expansion formulas work in the self-folded case.

\subsection*{Acknowledgement} EKO greatfully acknowledges support by T\"{U}B\.{I}TAK 2218 Grant \#121C385 and by T\"{U}B\.{I}TAK 1001 Grant \#123F121. EY is supported in part by the Royal Society Wolfson Award RSWF/R1/180004.

\section{Preliminaries}\label{sec:prelim}

\subsection{Surface Cluster Algebras}

Let $S$ be an orientable, connected, compact $2$-dimensional real surface  with (possibly empty) boundary $\partial S$. Let us assume positive orientation of $S$ as counterclockwise.

We fix a nonempty set $M$ of marked points on $S$ or $\partial S$ such that each boundary component of $S$ contains at least one point from $M$. Marked points not on $\partial S$ are called \definition{punctures}. The pair $(S,M)$ is called surface with marked points. 

We exclude the cases when the pair $(S,M)$ is a sphere with one, two or three punctures; a monogon with zero or one puncture; and a bigon or triangle without punctures as in~\cite{FT18}.

An ordinary arc (simply called \definition{arc}) $\gamma \in (S,M)$ is a simple curve on $S$ (i.e. does not cross except endpoints), considered up to isotopy, between two marked points in $M$  and $\gamma$ that does not cut out a monogon or a digon. Note also that an arc does not intersect the boundary except possibly at its endpoints. We say that two arcs in $S$ are \definition{noncrossing} if the arcs can be drawn with no intersection except possibly at their endpoints.

An \definition{ideal triangulation} $T$ of $(S,M)$ is a maximal collection of pairwise noncrossing arcs inside $S$ where no arc is isotopic to a boundary segment. The triangulation $S$ with the boundary segments, partitions $S$ into triangles. However, the two sides of a triangle may not necessarily be distinct, as illustrated in Figure~\ref{fig:self-fold} below. Such a triangle is called \definition{a self-folded triangle}. Self folded triangles are formed by a monogon encircling a puncture point and a \definition{radius} arc connecting the puncture to the marked point on the monogon.

\begin{figure}[ht]
\includegraphics[width=7.7cm]{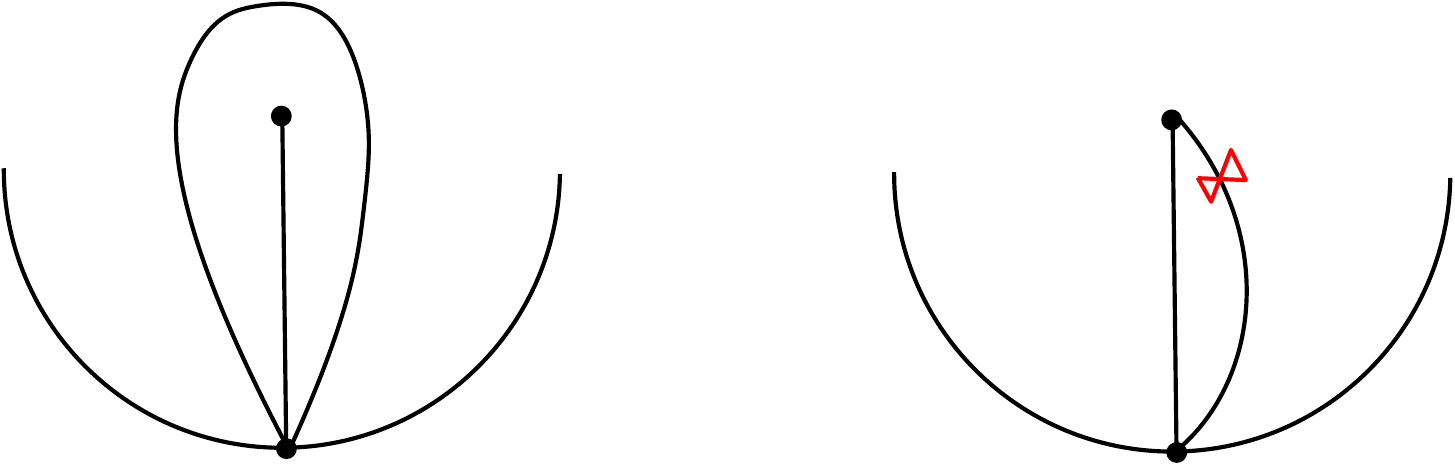}
\caption{An illustration of a self-folded triangle on the left and a tagged arc on the right.} \label{fig:self-fold}
\end{figure}

To be able to flip the arcs inside self-folded triangles, we need a new definition: a \emph{tagged} arc.

\begin{defn}
A \definition{tagged arc} is an arc that does not cut out an once-punctured monogon, with each of its ends marked either \emph{plain} or \emph{tagged}, such that every endpoint on $\partial S$ marked plain and if the start and end of an arc coincide then they are both marked the same way. 
\end{defn}

The definition of being noncrossing can be extended to tagged arcs.

\begin{defn} Two tagged arcs$\gamma$ and $\gamma'$ are said to be \definition{noncrossing} if one of the following is satisfied:
    \begin{enumerate}
    \item Untagged versions of $\gamma$ and $\gamma'$ are not isotopic, noncrossing and any shared endpoints are marked the same way in both tagged arcs.
    \item Untagged versions of $\gamma$ and $\gamma'$ are isotopic and exactly \emph{one} endpoint of $\gamma$ is marked in the same way as the corresponding endpoint of $\gamma'$.
\end{enumerate}
\end{defn}

A maximal pairwise noncrossing collection of tagged arcs is called a \definition{tagged triangulation}. Throughout this work, any use of the word triangulation will refer to a tagged triangulation, unless stated otherwise.

In this section, we will give some results on \emph{cluster algebras} which can be found in the papers suggested below. The aim of this paper is to give combinatorial formulae for finding expansions of cluster algebra elements. For the sake of combinatorics in this paper, it is enough to work with the geometric model. Thus we will not need a detailed definition of cluster algebras. We refer the reader to ~\cite{FZ02, FZ07, FT18, FST08} for the definitions and extraordinarily beautiful properties of cluster algebras and references in there. We also refer to \cite{MSW11, MW13, wilson} for the background on expansion formulae in cluster algebras coming from triangulated surfaces. 

For a cluster algebra, we have a set of generators, called \definition{cluster variables}, which are grouped into certain subsets, called \definition{clusters} or \definition{seeds}. Mutations in this setting are just flipping arcs in a triangulation to get another triangulation of the marked surface. The ground ring for cluster algebras is the group ring $\mathbb{ZP}$ of a semifield (sometimes called tropical semifield) $\mathbb{P}.$ If $\mathbb{P}=1$, we says that the cluster algebra has trivial coefficients. Otherwise, the cluster algebra have so called principle coefficients. Briefly, a cluster algebra with principle coefficients is the $\mathbb{ZP}$-subalgebra of a field of rational functions $\mathcal{F}:=\mathbb{QP}(x_1,\ldots,x_n)$ and comes with the following data; $(\boldsymbol{x},\boldsymbol{y},B)$ where
\begin{itemize}
    \item $\boldsymbol{x}=(x_1,\ldots,x_n)$ is a tuple of algebraically independent variables of $\mathcal{F}$, called \definition{initial cluster variables};
    \item $\boldsymbol{y}=(y_1,\ldots,y_n)$ is a tuple of generators of $\mathbb{P}$, called \definition{initial coefficients};
    \item a so-called skew-symmetric $B$-matrix or a quiver $Q$ without loops and $2$-cycles. (Here we note that Fomin-Zelevinsky~\cite{FZ02} defined cluster algebras in a more general setting using skew-symmetrizable matrices.)
\end{itemize}

\begin{thm}[\cite{FT18}] Let $(S,M)$ be a surface with marked points and we set an initial triangulation of the surface $S.$ Then there is a cluster algebra $\mathcal{A}$ associated to this surface which has the following properties:
\begin{itemize}
    \item the seeds are in bijection with the tagged triangulations of $(S,M)$;
    \item the cluster variables are in bijection with tagged arcs in $(S,M)$; and 
    \item the cluster variable $x_{\gamma}$ associated to an arc $\gamma$ is given by the lambda length of the arc $\gamma.$
\end{itemize}
    
\end{thm}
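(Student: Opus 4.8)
The plan is to prove both bijections and the lambda length statement simultaneously by realizing the combinatorial data (tagged triangulations and their flips) and the algebraic data (seeds and mutations) inside one geometric object, Penner's decorated Teichm\"uller space $\widetilde{\mathcal{T}}(S,M)$, and matching them there. I would organize the argument into four stages: (i) the combinatorics of tagged flips, (ii) the geometry of lambda lengths, (iii) bootstrapping from geometry to algebra, and (iv) bookkeeping for coefficients.

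For stage (i), I would check that flips of tagged arcs are everywhere defined --- every tagged arc in a tagged triangulation can be flipped, including the radius of a self-folded triangle, which is exactly the defect in the ideal picture that the tagged formalism repairs --- and that the \definition{signed adjacency matrix} $B(T)$ attached to a tagged triangulation transforms under a flip precisely by the Fomin--Zelevinsky matrix-mutation rule. This is a finite local verification, organized by the combinatorial type of the flipped arc (two distinct adjacent triangles, a side glued to itself, a self-folded configuration, tags present or absent). I would also record that the flip graph on tagged triangulations is connected; for unpunctured surfaces this is classical, and with punctures one argues in the tagged picture, using the tag-changing involution at a puncture to reduce to the unpunctured statement.

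For stage (ii), fix an ideal (tagged) triangulation $T=\{\tau_1,\dots,\tau_n\}$. By Penner's theorem, the map sending a decorated hyperbolic structure to the tuple of lambda lengths $(\lambda_{\tau_1},\dots,\lambda_{\tau_n})$ is a diffeomorphism $\widetilde{\mathcal{T}}(S,M)\xrightarrow{\sim}\mathbb{R}_{>0}^{\,n}$; in particular these functions are algebraically independent, and the same holds for the lambda lengths of any tagged triangulation. At a puncture one uses a horocycle there, and in the tagged case a modified (``conjugate'') lambda length obtained by inverting the relevant horocyclic datum. The key geometric input is the \emph{Ptolemy relation}: if $\gamma'$ is obtained from $\gamma$ by a flip inside an ideal quadrilateral with cyclic sides $a,b,c,d$, then $\lambda_\gamma\lambda_{\gamma'}=\lambda_a\lambda_c+\lambda_b\lambda_d$, with the appropriate modifications when the quadrilateral degenerates or when tags occur. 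This is established by an explicit hyperbolic-trigonometry computation in the upper half-plane. Comparing this with the exchange relation read off from $B(T)$ in stage (i) shows the two relations coincide. Stage (iii) then runs the standard bootstrap: the assignment $x_{\tau_i}\mapsto\lambda_{\tau_i}$ extends along any sequence of flips to a consistent assignment $x_\gamma\mapsto\lambda_\gamma$ on all cluster variables, defining an algebra map $\varphi$ from $\mathcal{A}$ into the ring of regular functions on $\widetilde{\mathcal{T}}(S,M)$; using the coordinate property of each tuple of lambda lengths together with the Laurent phenomenon, one shows $\varphi$ is injective, that it carries cluster variables exactly onto lambda length functions of tagged arcs (distinct tagged arcs being separated by their lengths in a suitable decorated structure), and, applied to whole tuples, that it carries seeds onto tagged triangulations. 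Stage (iv) repeats the argument keeping track of the $y$-variables via Thurston's shear coordinates and the lamination picture, matching the tropical exchange relations of the coefficients with mutation of the coefficient tuple.

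The main obstacle is, as usual in this theory, the punctured and tagged case throughout: ensuring connectedness of the tagged flip graph, pinning down the Ptolemy relation when a quadrilateral degenerates (a side glued to itself, a self-folded triangle) or when tags are present, and verifying that the tagged correspondence is genuinely a bijection rather than a two-to-one cover that must be quotiented. All of these subtleties are resolved in~\cite{FST08,FT18}, and for the purposes of this paper we take the theorem as given.
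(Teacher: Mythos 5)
The paper does not prove this statement at all; it is imported verbatim from the cited reference \cite{FT18}, so there is no internal proof to compare against. Your outline correctly reproduces the strategy of that reference (tagged flips and matrix mutation, Penner's lambda-length coordinates and Ptolemy relations in decorated Teichm\"uller space, the bootstrap identifying seeds with tagged triangulations, and laminations/shear coordinates for the coefficients), and your closing remark that the result is taken as given matches exactly how the paper uses it.
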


\subsection{Snake Graphs, Loop Graphs, Band Graphs}

Given a surface $S$ with a triangulation $T$ and an arc $\gamma$ on $S$, one would like to compute the expansion formula for the corresponding cluster variable in an efficient way. For this purpose, many authors suggested various methods. We will review the ones that first generate a combinatorial object from the given data (a snake graph, a band graph or a loop graph) and then express the expansion formula in terms of perfect matchings on that graph.

We will review the connection between these graphs and their corresponding expansion formulae, as well as how these graphs and their perfect matchings are in bijection with certain posets and their order ideals. Our goal in this paper, after generating the posets from snake graphs or directly from surfaces, to express the expansion formulae using the combinatorics of the posets and calculating the expansion formulae as products of $2$ by $2$ matrices.

Snake graphs appear in \cite{P05} as combinatorial objects assigned to triangulations of polygonal surfaces, also in \cite{CS13, CS15, MSW11} in the context of cluster algebras.  Band graphs appear in \cite{MSW13} as a combinatorial tool to assign a cluster expansion to closed loops on a surface $(S,M)$. Loop graphs appear in Definition 3.7 and 4.7 of \cite{wilson} in an attempt to parameterize expansion formula for tagged arcs. Band or loop graphs are constructed from a snake graph by gluing some edges. One considers good matchings on the resulting graphs which are matchings that can be extended to a perfect matching of the corresponding snake graph.   

For the detailed definitions and constructions, see \cite{MSW11, MSW13, wilson}.

\begin{defn}
\begin{enumerate}
    \item A perfect matching $M$ of a snake graph $G$ uses a fixed subset $\{i_1,i_2,\ldots,i_k\}$ of edges in $G.$ Then the \definition{weight} of $M$, $x(M)$, is defined as $x_{i_1}x_{i_2}\ldots x_{i_k}$ which is obtained by multiplying the labels on the edges of $M$.
    \item A snake graph has exactly two perfect matching that only include boundary edges of $G$; we called them \definition{minimal} and \definition{maximal} perfect matchings, $M_{-}$ and $M_{+}$ for a fixed convention. We follow the standard convention for $M_{-}$ which is starting with the south edge of the first tile for a perfect matching.
    \item Let $M$ be a perfect matching of a snake graph $G$. The set $\mathcal{M}:=(M_{-}\cup M)\setminus (M_{-}\cap M)$ is the collection of some boundary edges which is a subgraph $G_M$ (possibly disconnected) of $G$. Then we can define the \definition{coefficient monomial}, $y(M)$ of $M$ as the product of all $y_i$ where the tile $G_i$ lies inside $\mathcal{M}$ possibly with multiplicities.
\end{enumerate}
    
\end{defn}

Here is a simple example. 
\begin{figure}[H]
    \centering
    \includegraphics[width=7.9cm]{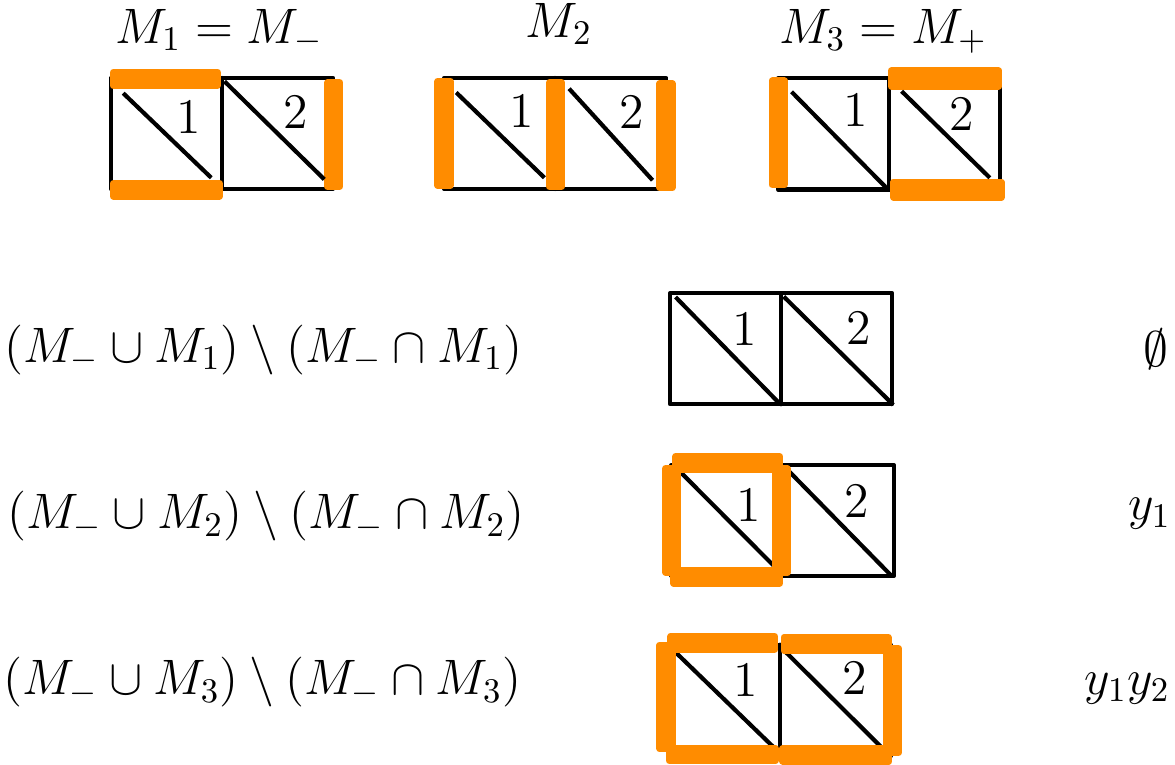}
    \caption{}
    \label{}
\end{figure}

\begin{rmk}\label{rmk:1} We need to define coefficient monomial, $y(M)$ of $M$, more carefully in the case where we have self-folded triangles in the initial triangulation of the surface. For an arc inside the self-folded triangle, say $r$, we consider the $y$ variable as $y_r/y_l$ where $l $ is the loop arc around the arc $r$. We refer reader to \cite{MW13} for details. See Section~\ref{sec:selffold} for more discussion of the self-folded case.
\end{rmk}

\begin{rmk}\label{rmk:2}The notion of perfect matching of a snake graph can be extended to band and loop graphs, in that case they are called \definition{good} matchings by \cite{wilson}. Briefly, a good matching on a snake graph is just a matching that can be extended to a perfect matching of the corresponding band or loop graph. \end{rmk}

\begin{thm}~\cite{FZ07, MSW11, wilson}\label{thm:snake} The expansion formula for an arc $\gamma$ has a general form

\begin{center}
    $\ds x_\gamma=\frac{1}{cross(T,\gamma)}\sum_M x(M)y(M) $
\end{center}
where $M$ ranges over a certain set of matchings of the associated snake (band or loop) graph $G$.

Here
\begin{itemize}
    \item $\ds cross(T,\gamma)=\prod_{j=1}^{k}x_{i_j}$ is the monomial obtained by multiplying the labels of the edges of the triangulation $T$ crossed by $\gamma$,
    
    \item $\ds x(M)=\prod_{j=1}^{r}x_{i_j}$  is the weight of the perfect (good) matching $M$ of the associated graph (snake, band or loop),
    
    \item $\ds y(M)=\prod_{j\in J}y_{i_j}$ is the coefficient monomial of the perfect (good) matching of the underlying graph (snake, band or loop).
\end{itemize}
\end{thm}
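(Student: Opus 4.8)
Since this is the classical expansion theorem (due to Musiker--Schiffler--Williams for snake and band graphs, and to Wilson for loop graphs), the plan is to organize the argument the way those references do: reduce to ordinary arcs on unpunctured surfaces, and then induct on $cross(T,\gamma)$ using a Ptolemy-type exchange (skein) relation as the inductive engine. An alternative route, closer to the spirit of the rest of this paper, is to compute $x_\gamma$ directly as a lambda length via a product of matrices and match it against the matching sum; but the cleanest self-contained argument is the inductive one.

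First, the base case. When $\gamma$ crosses no arc of $T$, either $\gamma\in T$ or $\gamma$ is isotopic to a boundary segment, and the associated graph is a single edge (resp. empty), so the formula reads $x_\gamma=x_\gamma$ (resp. $x_\gamma=1$). For the next few values of $cross(T,\gamma)$ one checks the formula by hand against the exchange relation that defines $x_\gamma$.

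Second, the inductive step. Pick an arc $\tau\in T$ crossed by $\gamma$; at a crossing point, resolve $\gamma$ against $\tau$ (or a nearby arc of the triangulation) by the skein relation, writing $x_\gamma x_\tau$ as a sum of two products of cluster variables attached to arcs (or closed curves) that cross $T$ strictly fewer times. On the combinatorial side, the snake graph $G_{T,\gamma}$ decomposes accordingly, and the grafting / self-overlapping identities of snake-graph calculus show that the matching polynomial $\sum_M x(M)y(M)$ obeys exactly the same relation. Comparing the two relations and invoking the inductive hypothesis yields the formula for $\gamma$. For band and loop graphs, the only extra point is that the \emph{good} matchings of Remark~\ref{rmk:2} are precisely the matchings that survive the edge-gluing producing the band or loop graph, so the same resolution goes through once one restricts to good matchings.

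Third, the coefficient bookkeeping. One must check that under the skein relation the coefficient monomials multiply as dictated by the $B$-matrix, while on the graph side $y(M)$ is read off from the symmetric difference $\mathcal{M}=(M_-\cup M)\setminus(M_-\cap M)$; the task is to verify that this symmetric-difference count transforms compatibly with the exchange relation. This forces one to pin down the conventions (which matching is $M_-$, the orientation of the first tile) so that the two exchange relations agree term by term. Finally, for the punctured and tagged cases one passes to a suitable cover, or uses the tagged-to-plain correspondence, to reduce singly- and doubly-notched arcs and closed loops to arcs on an auxiliary unpunctured surface where the previous steps apply; Wilson's loop-graph construction is designed to package exactly this reduction. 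The main obstacle is precisely this last step together with the coefficient bookkeeping: making sure that notching, self-folded triangles (cf. Remark~\ref{rmk:1}, where the relevant variable is $y_r/y_l$), and the gluing that produces band and loop graphs are all compatible with one fixed sign/orientation convention, so that the inductive skein relation and the snake-graph resolution identity line up on the nose.
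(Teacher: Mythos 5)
You should first note that the paper itself does not prove Theorem~\ref{thm:snake}: it is imported as a known result from \cite{FZ07,MSW11,wilson} (and later used as the input to the proof of Theorem~\ref{thm:poset extension}), so there is no internal argument to measure your write-up against. Judged on its own terms, what you have written is a strategy memo rather than a proof. Every substantive ingredient is named but none is carried out: you assert that the skein/Ptolemy resolution of $x_\gamma x_\tau$ is matched by a grafting identity for the matching polynomial $\sum_M x(M)y(M)$, that the coefficient monomials $y(M)$ read off the symmetric difference with $M_-$ transform compatibly with the exchange relation, and that notched arcs and closed curves reduce to the unpunctured ordinary case ``by a suitable cover or the tagged-to-plain correspondence.'' Those three assertions are precisely the content of \cite{MSW11} (with \cite{MSW13} for band graphs and \cite{wilson} for loop graphs); each occupies a substantial portion of those papers and none of them follows from anything you wrote. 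Moreover, the induction cannot be run on ordinary arcs of unpunctured surfaces alone: resolving a crossing via the skein relation produces generalized arcs with self-intersections and closed curves, so the statement being inducted on must from the outset include band/loop graphs and the good matchings of Remark~\ref{rmk:2}, which your ``reduce to ordinary arcs on unpunctured surfaces'' framing does not set up.

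Two concrete soft spots deserve emphasis. First, the coefficient bookkeeping is not a convention check: with principal coefficients the skein relation carries $y$-monomials determined by intersection data, and the identification of $y(M)$ (the symmetric-difference/height statistic relative to $M_-$, with the $y_r/y_l$ modification of Remark~\ref{rmk:1} in the self-folded case) with those monomials is itself a theorem requiring a bijective argument, not a normalization one can ``pin down.'' Second, the notched cases are not handled in the literature by passing to a cover: the known routes are either the relation $x_{\ell}=x_{\gamma}\,x_{\gamma^{(p)}}$ (where $\ell$ is the loop around the puncture) used in \cite{MSW11}, or Wilson's direct analysis of good matchings of loop graphs; your sketch neither chooses nor executes one of these. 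As it stands, the proposal is an accurate pointer to where a proof can be found, but it does not constitute one.
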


\section{Expansions via T-paths and T-walks}~\label{sec:Twalks}

In this section we will extend the definition of T-paths previously given for ordinary arcs and loops  to all surfaces, including punctured surfaces notched arcs. To this end, we will make use of an intermediary structure that we call \emph{T-walks}, which are similar to T-paths but they allow revisiting arcs. Let us start with defining T-paths in the ordinary case.

Let $(S,M)$ be a surface and $T$ be a triangulation. Recall that we work with Riemann surfaces whose positive orientation assumed to be counterclockwise. Set $\gamma$ be an ordinary arc between marked points on S which is not in $T$. We fix the orientation of $\gamma$ such that it has a start and an end point. Assume $\gamma$ crosses some arcs in $T$ $n$-times.

\begin{defn} A $(T,\gamma)$-path $\alpha=(\alpha_1,\cdots,\alpha_n)$, $T$-path for short, is a concatenation of arcs $\alpha_1,\cdots,\alpha_n$ of the triangulation, where the start and end of a $T$-path align with $\gamma$; and $\alpha$

\begin{enumerate}
    \item has to cross $\gamma$ at least once,
    \item can use both boundary and internal arcs of the surface,
    \item is of odd length,
    \item cannot reuse an arc as it follows $\gamma$,
    \item at every even step of the concatenation it must cross $\gamma$.
\end{enumerate} 
\end{defn}

Note that it may cross $\gamma$ at the odd steps. Also, the places where it crosses $\gamma$ are in order along $\gamma$. We associate a Laurent polynomial $x(\alpha)$ to a $(T,\gamma)$-path $\alpha$, which is a cluster algebra element, as follows.

\[x(\alpha)=\prod_{i \text{ odd}} x_{\alpha_i}\prod_{i \text{ even}} x^{-1}_{\alpha_i}\]

\begin{thm}~\cite{ST09} Let $\mathcal{T}$ denote the set of all $(T,\gamma)$-path for a given arc $\gamma\in T$. Then
    \[x_{\gamma}=\sum_{\alpha\in\mathcal{T}}x(\alpha)\] is the cluster variable correnponding to an arc $\gamma$ in an unpunctured surface $(S,M).$
\end{thm}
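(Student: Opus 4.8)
The statement to prove is the Schiffler--Thomas expansion theorem: for an unpunctured surface $(S,M)$ with triangulation $T$ and an arc $\gamma$ crossing arcs of $T$ exactly $n$ times, the cluster variable $x_\gamma$ equals $\sum_{\alpha \in \mathcal{T}} x(\alpha)$, where $\mathcal{T}$ is the set of $(T,\gamma)$-paths. Since Theorem~\ref{thm:snake} (the snake-graph expansion) is available to us, the cleanest route is not to re-derive the formula from lambda lengths but to establish a \emph{weight-preserving bijection} between $(T,\gamma)$-paths and perfect matchings of the snake graph $G_\gamma$ associated to $\gamma$, and then check that the contributions match term by term after clearing the denominator $cross(T,\gamma)$. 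This is essentially the Musiker--Schiffler correspondence \cite{MS10} referenced in the introduction, so the plan is to reconstruct that bijection in the present notation.

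First I would set up the snake graph $G_\gamma$ explicitly: label the arcs of $T$ crossed by $\gamma$ in order as $\tau_{i_1}, \dots, \tau_{i_n}$, so that the crossing sequence cuts $\gamma$ into $n+1$ segments passing through triangles $\Delta_0, \dots, \Delta_n$, and $G_\gamma$ is the band of $n$ tiles glued along the edges labeled $\tau_{i_1}, \dots, \tau_{i_{n-1}}$ where tile $G_j$ records triangle $\Delta_{j-1}$ (or $\Delta_j$). Next, given a $(T,\gamma)$-path $\alpha = (\alpha_1, \dots, \alpha_{2k+1})$, I would describe how to read off a perfect matching $M(\alpha)$ of $G_\gamma$: the even-indexed arcs $\alpha_2, \alpha_4, \dots, \alpha_{2k}$ are precisely the arcs of $T$ that $\alpha$ shares with the crossing sequence of $\gamma$ (condition (5) forces them to be crossings, and they occur in order along $\gamma$), and these pick out which interior/boundary edges of $G_\gamma$ lie in the matching; the odd-indexed arcs fill in the remaining matched edges within each maximal block of tiles between consecutive ``jumps.'' The key local check is that in each such block the $T$-path, being forced to be a geodesic-like concatenation in the triangulated surface with no repeats (condition (4)), traverses exactly the two boundary sides of the block that a perfect matching of a straight strip of tiles must use. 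Then I would verify $x(\alpha) \cdot cross(T,\gamma) = x(M(\alpha))$: the odd edges of $\alpha$ contribute positively, the even edges (the shared crossings) contribute with inverse, and multiplying by $cross(T,\gamma) = \prod_j x_{i_j}$ cancels exactly the even contributions while converting the odd ones into the boundary-edge weights of the matching. Finally I would check surjectivity and injectivity by running the construction backward — every perfect matching of $G_\gamma$, read as an alternating sequence of boundary edges, reassembles into a unique $T$-path — and conclude by invoking Theorem~\ref{thm:snake} with trivial coefficients ($\mathbb{P} = 1$, so $y(M) = 1$ and the sum is just over $x(M)$).

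The main obstacle, I expect, is the bookkeeping at the ``turning points'' of the snake graph: where $G_\gamma$ switches from growing east to growing north (equivalently, where consecutive triangles $\Delta_{j-1}, \Delta_j, \Delta_{j+1}$ change the side of $\gamma$ on which the shared vertex lies), the local picture of which edges a $T$-path uses is genuinely case-dependent, and one must check that conditions (1)--(5) — especially the parity condition (3) and the ``cross at every even step'' condition (5) — are exactly equivalent to the matching condition at those tiles. A secondary subtlety is the treatment of boundary arcs of $(S,M)$: the $T$-path is allowed to use boundary segments (condition (2)), and these correspond to the boundary edges of $G_\gamma$ that are never shared between tiles, so one must confirm the weight $x_b = 1$ convention for boundary arcs is consistent on both sides. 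Once the local dictionary at each tile (including the two turning types and the two endpoint triangles $\Delta_0, \Delta_n$) is tabulated, the global bijection and the weight identity follow by induction on $n$, and the theorem drops out of Theorem~\ref{thm:snake}.
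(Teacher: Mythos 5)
The paper does not actually prove this statement: it is quoted from Schiffler--Thomas \cite{ST09} (whose original argument is an induction on the number of crossings compatible with flips), and the closest the paper comes to a proof is its own generalization, namely deriving the T-walk formula (Theorem~\ref{thm:texpansion}) from the snake-graph expansion (Theorem~\ref{thm:snake}) via the bijection with poset ideals, together with Proposition~\ref{T-bijection} relating T-walks to T-paths. Your plan --- a weight-preserving bijection between $(T,\gamma)$-paths and perfect matchings of $G_\gamma$, then invoking Theorem~\ref{thm:snake} with $y\equiv 1$ --- is exactly the Musiker--Schiffler \cite{MS10} route and is the same strategy in spirit as the paper's machinery, so as a route it is viable. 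One logical caveat you should at least acknowledge: the published proofs of Theorem~\ref{thm:snake} for unpunctured surfaces (\cite{MSW11}) themselves lean on the T-path expansions of \cite{ST09,S10} through precisely this bijection, so presented as a proof of the Schiffler--Thomas theorem your argument is only non-circular if you take an independent proof of the matching formula as the starting point; if instead you present it as a consistency/translation statement (as this paper implicitly does), the issue disappears.

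Two details in your local dictionary need repair. First, a perfect matching of $G_\gamma$ is generally \emph{not} a collection of boundary edges: only $M_\pm$ are all-boundary, and interior (glued) edges, which carry the labels of the crossed arcs $\tau_{i_j}$, do occur in matchings. Correspondingly, your claim that multiplying $x(\alpha)$ by $cross(T,\gamma)$ ``converts the odd contributions into boundary-edge weights'' is off: the correct bookkeeping is that the even-indexed arcs of the T-path cancel against part of $cross(T,\gamma)$, while the crossed arcs that do \emph{not} occur as even steps reappear as interior matched edges of $M(\alpha)$ (compare the T-path $(8,2,5)$ in the paper's running example, whose matching uses the interior labels $1$ and $3$). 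Second, and relatedly, the even-indexed arcs of a T-path form only a subset of the crossing sequence, not all of it, so ``read the matching as an alternating sequence of boundary edges'' does not give the inverse map; the inverse must be built tile by tile, exactly the turning-point case analysis you defer. These are fixable (they are carried out in \cite{MS10}), but as written the sketch's central identity $x(\alpha)\cdot cross(T,\gamma)=x(M(\alpha))$ is justified by an incorrect description of $M(\alpha)$, so the local dictionary, not just its bookkeeping, still has to be supplied.
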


Let $\gamma$ be an arc on a (possibly punctured) surface, crossing the arcs in $T$ in order $a_1,a_2,\ldots,a_n$ and label triangles along the way with $\triangle_i$ starting from $i=0$. Here we allow $\gamma$ to be a notched arc or a loop for full generality. Therefore, $a_i$'s may not be all distinct. Let us assign a direction to each arc $a_i$ crossed by $\gamma$ by orienting $a_i$ towards one of its endpoints. Since $\gamma$ crosses $n$ arcs, there are $2^n$ possible choices of assigning the directions to the collection of arcs crossed by $\gamma$. We will encode choices using vectors  $\vec{v} \in \{0,1\}^n$, where $v_i=0$ if $a_i$ is oriented counterclockwise as the orientation of   $\triangle_{i-1}$, and $v_i=1$ otherwise. (See Section~\ref{sec:selffold} for the self- folded case). We call the direction $\vec{0}$ the \definition{minimal direction}. See Figure~\ref{fig:vyon} below the examples, the example on the left having arc $3$ crossed twice with each crossing oriented separately.

\begin{figure}[H]
\includegraphics[width=9.3cm]{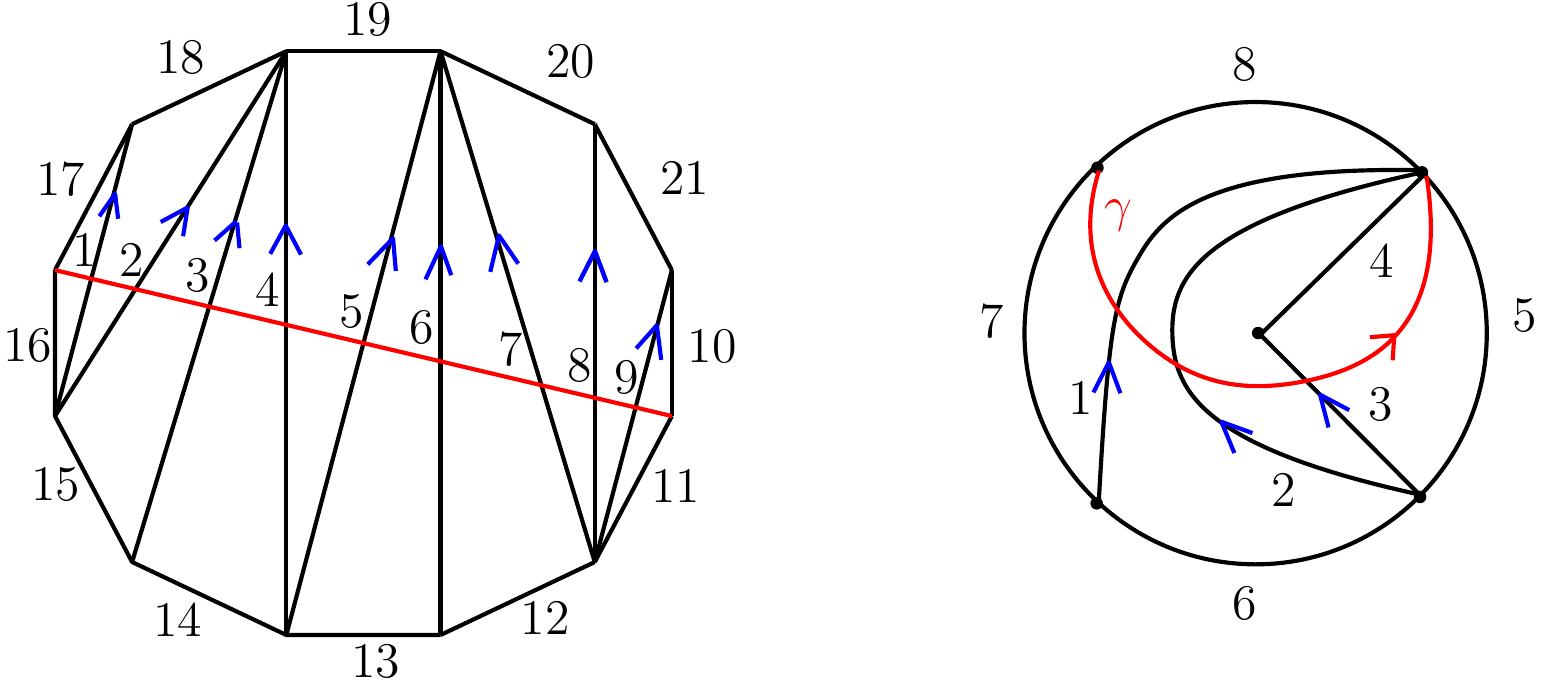}
\caption{The minimal direction $\vec{v}$ for chosen $\gamma$} \label{fig:vyon}
\end{figure}

\begin{lemma}{\label{lem:minimal direction}} In the minimal direction $\vec{0}$, if the arc $a_{i+1}$ follows $a_i$ counterclockwise, then $a_i$ and $a_{i+1}$ are both directed towards the shared vertex in the $\triangle_i$. They are both directed away from the shared vertex in the $\triangle_i$ otherwise.
\end{lemma}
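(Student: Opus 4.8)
The plan is to localize the whole statement to the single triangle $\triangle_i$ and then finish with a short case check. First I would record the basic geometry: since $\gamma$ passes from $\triangle_{i-1}$ through $\triangle_i$ to $\triangle_{i+1}$, crossing $a_i$ and then $a_{i+1}$, both $a_i$ and $a_{i+1}$ are sides of $\triangle_i$; assuming $\triangle_i$ is not self-folded they are distinct sides and meet in exactly one vertex $v$ of $\triangle_i$, namely the vertex opposite the third side $c_i$, the side $\gamma$ does not cross. So ``the shared vertex in $\triangle_i$'' is well defined, and the assertion is purely about the two directed sides $a_i$ and $a_{i+1}$ of this one triangle.

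The key step is to express both minimal directions inside $\triangle_i$. The minimal direction of $a_{i+1}$ is, by definition, the one induced by the counterclockwise boundary orientation of $\triangle_{(i+1)-1}=\triangle_i$, so it runs counterclockwise along $\partial\triangle_i$. The minimal direction of $a_i$ is defined via $\triangle_{i-1}$, but using the standard gluing fact for oriented surfaces --- two triangles sharing an edge induce opposite orientations on that edge --- the counterclockwise orientation of $\triangle_{i-1}$ induces on $a_i$ the reverse of what the counterclockwise orientation of $\triangle_i$ induces. Hence, read inside $\triangle_i$, the minimal direction makes $a_i$ run clockwise along $\partial\triangle_i$ and $a_{i+1}$ run counterclockwise along $\partial\triangle_i$.

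To conclude, I would run a two-case check on $\triangle_i$. Listing the counterclockwise cyclic order of the three vertices of $\triangle_i$, there are exactly two positions for $v$ relative to the other endpoints of $a_i$ and $a_{i+1}$, and these are precisely the two alternatives in the statement (``$a_{i+1}$ follows $a_i$ counterclockwise'' --- i.e.\ rotating counterclockwise about $v$ through the interior of $\triangle_i$ sends $a_i$ to $a_{i+1}$, equivalently the counterclockwise vertex order is (other end of $a_i$, then $v$, then other end of $a_{i+1}$) --- versus the opposite). In the first case one reads off directly, essentially from a picture of an oriented triangle, that the clockwise arrow on $a_i$ and the counterclockwise arrow on $a_{i+1}$ both point into $v$; in the other case both point out of $v$. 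This is exactly the claimed toward/away dichotomy.

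The hard part will not be any deep geometry --- it is just keeping the conventions straight: matching the paper's sign and indexing conventions for the minimal direction (the role of $\triangle_{i-1}$ versus $\triangle_i$) and the precise reading of ``$a_{i+1}$ follows $a_i$ counterclockwise'' so that the conclusion comes out as stated and not reversed. I would pin these down once against Figure~\ref{fig:vyon} before doing the case analysis. I would also flag that the self-folded case, where $a_i$ and $a_{i+1}$ may coincide, is deliberately excluded here and dealt with in Section~\ref{sec:selffold}.
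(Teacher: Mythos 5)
The paper itself gives no written proof of this lemma --- it is stated and then supported only by the accompanying figure --- so there is no argument to compare against, and your plan is the natural way to make it rigorous: localize to $\triangle_i$, use the fact that the two triangles adjacent to the interior edge $a_i$ induce opposite boundary orientations on it (so that, read inside $\triangle_i$, the minimal direction runs $a_i$ clockwise and $a_{i+1}$ counterclockwise along $\partial\triangle_i$), and finish with a two-case check. That skeleton is sound.

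There is, however, a concrete error at exactly the spot you flagged as delicate. Your two glosses of ``$a_{i+1}$ follows $a_i$ counterclockwise'' are not equivalent; they are opposite conditions. Take the shared vertex $v=(0,0)$, the other endpoint $P$ of $a_i$ at $(1,0)$, and the other endpoint $Q$ of $a_{i+1}$ at $(0,1)$. Rotating counterclockwise about $v$ through the interior (the first quadrant) does carry $a_i$ to $a_{i+1}$, but the counterclockwise vertex order is $(v,P,Q)$, not $(P,v,Q)$. In this configuration the counterclockwise boundary cycle is $v\to P\to Q\to v$, so the clockwise direction on $a_i$ is $P\to v$ and the counterclockwise direction on $a_{i+1}$ is $Q\to v$: both point \emph{into} $v$, matching the lemma. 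In the mirror configuration --- which is the one your vertex-order gloss actually describes --- the same computation gives both arrows pointing \emph{away} from $v$, i.e.\ the reversed dichotomy. So as written, your case check is correct under one of your two readings and contradicted under the other, and since you assert them as equivalent the proof does not yet pin down the dichotomy. The fix is to discard the vertex-order gloss and keep the rotation-about-the-shared-vertex reading (checking it against Figure~\ref{fig:vyon} and against the convention that $v_i=0$ means $a_i$ carries the boundary orientation induced by the counterclockwise-oriented $\triangle_{i-1}$); with that reading, your orientation-reversal step plus the two-case check yields the lemma exactly as stated, and the same resolution should be used wherever the phrase recurs, e.g.\ in the definition of $P_\gamma$ and in the proof of Theorem~\ref{thm:bijection}.
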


\begin{figure}[H]
\includegraphics[width=5.6cm]{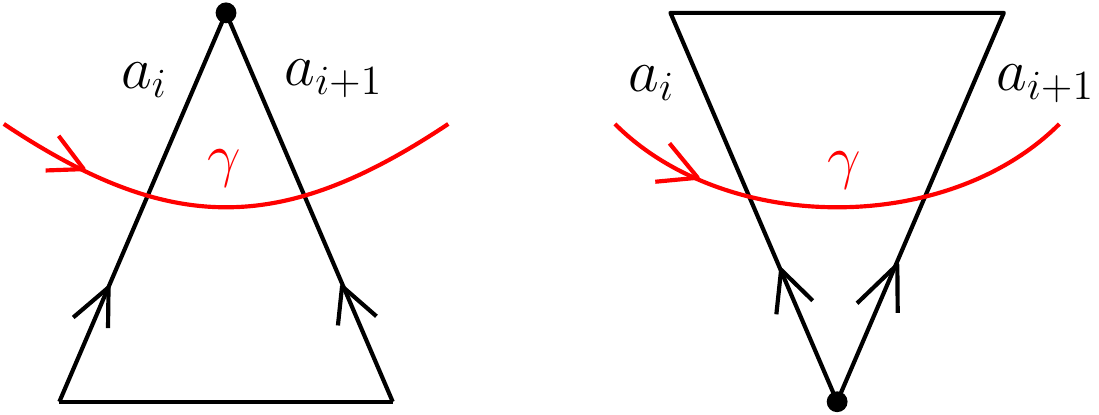}
\caption{}
\end{figure}

We now give a \emph{T-walk} definition which generalize the definition of a T-path. Earlier definitions can be found in~\cite{S10} for unpunctured surfaces and \cite{GM15} for once-punctured disk. Informally, a T-walk is a certain walk which is formed by concatenating arcs on a given triangulated surface respecting the order of crossings of the arc $\gamma$ as in the case of a T-path but now we allow to use an arc more than one. Note that we drop the notation $\gamma$, but when we say a T-walk we mean a T-walk for a given arc $\gamma$. 

\begin{defn} For an arc $\gamma$ with crossings $e_1,e_2,\ldots, e_n$ and a direction $\vec{v} \in \{0,1\}^n$, let $\alpha_i$ denote the arc with crossing $e_i$, directed according to $\vec{v}$. If we can find arcs $\beta_1,\ldots,\beta_{n-1}$ (adding $\beta_0$ and $\beta_n$ if necessary) satisfying the following properties, we call the concatenation of arcs  ($\beta_0$) $\alpha_1,\beta_1,\alpha_2,\beta_2,\cdots,\beta_{n-1},\alpha_n$ ($\beta_n$) the \definition{T-walk} $T_{\vec{v}}$. Else, we say there is no T-walk corresponding to $\vec{v}$:
\begin{itemize}
    \item For $i\in [n-1]$, $\beta_i$ is the unique directed arc in $\triangle_i$  that connects the end point of $\alpha_i$ with the starting point of $\alpha_{i+1}$. 
    \item If $\gamma$ is a loop, $\beta_n$ is the unique directed arc in $\triangle_0$ connecting the $\alpha_n$ to the starting point of $\alpha_1$. There is no $\beta_0$ in this case to consider.
    \item If $\gamma$ is not a loop, $\beta_0$ exists if and only if the starting point of $\gamma$ and the starting point of $\alpha_1$ are distinct. It is the unique directed arc in $\triangle_0$ that connects the starting point of $\gamma$ to the starting point of $\alpha_1$.
    \item Similarly, if $\gamma$ is not a loop, $\beta_n$ exists if and only if the endpoint of $\gamma$ and the end point of $\alpha_b$ are distinct. In that case, $\beta_n$ is the unique directed arc in $\triangle_n$ that connects the end point of $\alpha_n$ to the end point of $\gamma$.
\end{itemize} 
\end{defn}

\begin{rmk} Note that each choice of $\vec{v}$ gives you at most one valid T-walk, as in any triangle $\triangle_i$ there is a single arc connecting any distinct points (See Section \ref{sec:selffold} for self-folded case). Furthermore, the T-walk $T_{\vec{v}}$, if it exists, has the following properties:
    \begin{enumerate}
        \item  The T-walk $T_{\vec{v}}$ begins at the starting point of $\gamma$ and ends at the end of $\gamma$, which may be the same point if $\gamma$ is a loop.
        \item One has to cross $a_i$ at every other step of the concatenation.
        \item An arc in $T$ can be used several times in a walk; including using an arc back and forth repetitively.
    \end{enumerate}
\end{rmk}

We use $\mathrm{TW}(\gamma)$ to denote the set of all T-walks corresponding to $\gamma$. As in the case of T-paths, we associate an element of the cluster algebra to each T-walk $T_{\vec{v}}$ as follows:
$$x(T_{\vec{v}}):=\displaystyle\frac{\prod x_{\beta_i}}{\prod x_{\alpha_i}}.$$
We further set $y_{\vec{v}}:=\prod_{{v_i}=1} y_{\alpha_i}$. 

Note that the denominator $\prod x_{\alpha_i}$ is the same for all $T$-walks and is equal to $cross(T,\gamma)$.

\begin{example}~\label{ex:Twalk} The following example can be compared with Example 3.9 in~\cite{GM15}. Consider the triangulation of the disk with an arc $\gamma$ as shown in Figure~\ref{fig:twalks}.

\begin{figure}[H]
    \centering
    \includegraphics[width=3.5cm]{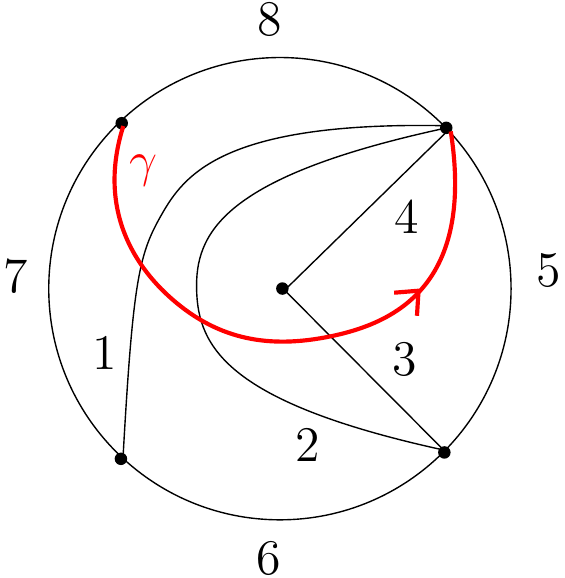}
    \caption{An arc $\gamma$ crossing three arcs.}
    \label{fig:twalks}
\end{figure}

Since $\gamma$ crosses three arcs, there are $2^3=8$ possible vectors $\vec{v}$ for the orientation of arcs $a_1, a_2$ and $a_3$ which are crossed by $\gamma$. The five valid $T$-walks are drawn below. Notation is simplied by just using numbers $i$ instead of $a_i$'s.

\begin{figure}[H]
    \centering
    \includegraphics[width=15cm]{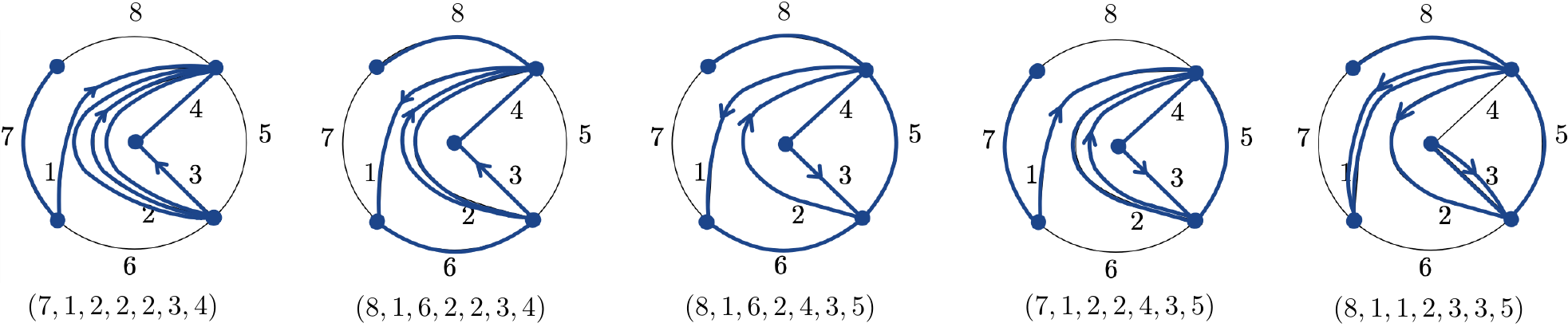}
\end{figure}

 Note that choices $(1,1,0),(0,1,1)$ and $(0,1,0)$ do not give valid T-walks. That is because the if the endpoint of $\alpha_1=1$ and the starting point of $\alpha_2=2$ are the same as in $(0,1,1)$, there is no valid choice for $\beta_1$ on $\triangle_1$. Similarly the endpoint of $\alpha_2=2$ and the starting point of $\alpha_3$ being the same creates a problem, as in the case of $(1,1,0)$. The case $(0,1,0)$ has both these issues.   In general, though there are $2^n$ available directions, the cardinality of the set of T-walks will be a lot smaller, matching the number of terms in the cluster expansion.  
 
For each of vectors that actually give valid $T$-walks, the corresponding walks, $x_{\vec{v}}$ and $y_{\vec{v}}$ values are listed below. 
\begin{itemize}
    \item $\vec{v}_1=(0,0,0)$: $\quad$ $T_{\vec{v}_1}=(7,1,2,2,2,3,4)$,$\quad$  $x_{\vec{v}_1}=\frac{x_2 x_4 x_7}{x_1 x_3}$,$\quad$  $y_{\vec{v}_1}=1$.
    \item $\vec{v}_2=(1,0,0)$: $\quad$ $T_{\vec{v}_2}=(8,1,6,2,2,3,4)$,$\quad$ $x_{\vec{v}_2}=\frac{x_4 x_6 x_8}{x_1 x_3}$,$\quad$  $y_{\vec{v}_2}=y_1$.
    \item $\vec{v}_3=(0,0,1)$: $\quad$ $T_{\vec{v}_4}=(7,1,2,2,4,3,5)$,$\quad$ $x_{\vec{v}_2}=\frac{x_4 x_5 x_7}{x_1 x_3}$,$\quad$  $y_{\vec{v}_2}=y_3$.
    \item $\vec{v}_4=(1,0,1)$: $\quad$ $T_{\vec{v}_4}=(8,1,6,2,4,3,5)$,$\quad$  $x_{\vec{v}_4}=\frac{x_4 x_6 x_8}{x_1 x_3}$,$\quad$  $y_{\vec{v}_4}=y_1y_3$.
    \item $\vec{v}_5=(1,1,1)$: $\quad$  $T_{\vec{v}_5}=(8,1,1,2,3,3,5)$,$\quad$  $x_{\vec{v}_5}=\frac{x_5 x_8}{x_2}$,$\quad\hspace{.32cm}$  $y_{\vec{v}_4}=y_1 y_2 y_3$.

\end{itemize}

\end{example}

We will see next how the values above can be used to calculate the cluster expansion of our arc.

 \begin{thm}\label{thm:texpansion} The expansion formula for a generalized arc $\gamma$ on a possibly punctured surface can be calculated via T-walks as follows:
\begin{align}x_{\gamma}=\displaystyle  \sum_{T_{\vec{v}} \in \mathrm{TW}(\gamma)} x(T_{\vec{v}})y(T_{\vec{v}}).
\end{align}\label{eqn:main2} 
\end{thm}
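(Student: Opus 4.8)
The plan is to establish the T-walk expansion formula by reducing it to the already-known snake/band/loop graph expansion of Theorem~\ref{thm:snake}, via an explicit bijection between T-walks for $\gamma$ and perfect (good) matchings of the associated graph $G$. Concretely, I would first recall the standard correspondence between crossings $a_1,\dots,a_n$ of $\gamma$ with $T$ and the tiles of the snake graph $G$ (or band/loop graph, when $\gamma$ is a loop or notched): the $i$-th tile $G_i$ records the quadrilateral associated to the crossing $e_i$, and the two diagonals of that quadrilateral are the arcs of $T$ sharing the triangle $\triangle_{i-1}$ resp. $\triangle_i$. A choice of direction $\vec v\in\{0,1\}^n$ for the crossed arcs is then translated into a choice, tile by tile, of which of the two "diagonal-type" edges of $G_i$ is selected; I would check that the local compatibility condition in the definition of a T-walk (the existence and uniqueness of the connecting arc $\beta_i$ inside $\triangle_i$) corresponds exactly to the matching condition that no vertex of $G$ is covered twice, so that valid $\vec v$ biject with perfect matchings $M$ of $G$ (and with good matchings in the band/loop case, after checking the gluing is respected).

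The second step is to match the weights. Under the bijection $T_{\vec v}\leftrightarrow M$, I would show $x(T_{\vec v})\cdot cross(T,\gamma)=x(M)$, i.e.\ the numerator $\prod x_{\beta_i}$ equals the product of edge labels of $M$; this is a tile-by-tile check using the labeling conventions on $G$ (the $\beta_i$ are precisely the arcs labeling the edges chosen by the matching). Since $cross(T,\gamma)=\prod x_{a_i}=\prod x_{\alpha_i}$ is the common denominator, dividing by it turns $\frac1{cross(T,\gamma)}\sum_M x(M)y(M)$ into $\sum_{\vec v} x(T_{\vec v})\,y(M)$. Then I would verify $y(M)=y(T_{\vec v})=\prod_{v_i=1}y_{\alpha_i}$: the symmetric difference $M_-\triangle M$ encloses exactly the tiles $G_i$ for which the $i$-th choice differs from the minimal matching $M_-$, and by Lemma~\ref{lem:minimal direction} the minimal direction $\vec 0$ is precisely the one corresponding to $M_-$, so $v_i=1$ iff tile $i$ lies in $M_-\triangle M$. (In the self-folded case I would invoke the conventions of Remark~\ref{rmk:1} and defer to Section~\ref{sec:selffold}.)

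Finally, for the notched and loop cases I would handle the endpoint arcs $\beta_0,\beta_n$ and the closing arc $\beta_n$ of a loop: these correspond to the extra edges that distinguish band/loop graphs from snake graphs, and I would confirm that the "exists iff the endpoints are distinct" clauses in the T-walk definition mirror exactly the good-matching conditions of~\cite{wilson, MSW13}. For doubly-notched arcs one typically passes through the known identities expressing $x_\gamma$ in terms of the plain arc and a once-notched arc; I would either reduce to those or check the graph-level statement directly. Summing over all valid $\vec v$ and using Theorem~\ref{thm:snake} then yields the claimed identity.

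The main obstacle I anticipate is the bookkeeping in the non-unpunctured cases: ensuring that the local arc $\beta_i$ inside a triangle adjacent to a puncture (or inside/around a self-folded triangle) is genuinely unique and that the direction-to-matching dictionary remains a clean bijection there, including correctly accounting for the $y_r/y_l$ substitution of Remark~\ref{rmk:1}. The unpunctured case is essentially a repackaging of Schiffler--Thomas together with Musiker--Schiffler's matching/$T$-path bijection; the real work is verifying that introducing "repetitions" (an arc used back-and-forth, as in Example~\ref{ex:Twalk}) exactly captures the band/loop and notched phenomena without over- or under-counting, which is where I would spend the most care — likely organized as a separate lemma identifying, in each triangle type, the unique connecting arc and its edge-label, followed by the global bijection and weight-matching argument.
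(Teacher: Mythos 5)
Your proposal is correct and is essentially the paper's argument: the paper proves this theorem by exactly the bijection you describe, only factored through the labeled poset — Theorem~\ref{thm:bijection} identifies the valid direction vectors $\vec v$ with order ideals of $P_\gamma$ (your ``tiles flipped from the minimal matching''), Proposition~\ref{prop:minimalexists} is precisely your identification of the minimal direction with $M_-$ via $x(T_0)=x(M_-)/cross(T,\gamma)$, and Theorem~\ref{thm:poset extension} carries the reduction to the matching expansion of Theorem~\ref{thm:snake}, with the self-folded and notched/loop bookkeeping deferred to Section~\ref{sec:selffold} and to the cited good-matching results just as you suggest. The only difference is organizational: you would redo the matching-level weight comparison directly, whereas the paper obtains Theorem~\ref{thm:texpansion} as a short corollary of results it has already established.
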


The proof of the Theorem is postponed to Section \ref{subsec:proof}.

\begin{example} \label{ex:twalksviatwalks} For the surface from Figure~\ref{fig:twalks}, the 5 possible $T$-walks were listed in Example~\ref{ex:Twalk} above. We can use this to calculate $x_{\gamma}$:

\begin{align*}
x_{\gamma}&=\displaystyle  \frac{x_2 x_4 x_7}{x_1 x_3}+\frac{x_4 x_6 x_8 y_1}{x_1 x_3} +\frac{x_4 x_5 x_7 y_3}{x_1 x_3} +\frac{x_4 x_6 x_8 y_1 y_3}{x_1 x_3} +\frac{x_5 x_8 y_1 y_2 y_3}{x_2}
\\ &=\displaystyle \frac{1}{x_1 x_2 x_3} \left({x_2^2 x_4 x_7}+{x_2 x_4 x_6 x_8 y_1}+{x_2 x_4 x_5 x_7 y_3} +{x_2 x_4 x_6 x_8 y_1 y_3} +{ x_1 x_3 x_5 x_8 y_1 y_2 y_3}
\right)
\end{align*}
\end{example}


Earlier in the literature, Schiffler~\cite[Theoem 3.1]{S10} proved such a result for arcs in the unpunctured surfaces.
  
 Furthermore, $T_{\vec{v}}$ crosses $\gamma$ at each $\alpha_i$, which are either all the even steps or all the odd steps. It is also possible to have T-walks of even length. This is a necessary generalization to accommodate different cases, such as notches. The biggest difference between T-walks and T-paths is that, in T-walks arcs are often reused, but that can easily be remedied.

 For a T-walk $T_{\vec{v}}$ cancelling pairs of redundancies (steps where we follow a vertex and go back immediately) eventually gives us a simpler walk which we denote by $\lfloor T_{\vec{v}} \rfloor$. Note that the order of cancellations does not matter. Next we show that in settings where T-paths are defined, what we end up with is a T-path 

\begin{prop}~\label{T-bijection} Let $\gamma$ be an ordinary arc. For any T-walk $T_{\vec{v}}$ corresponding to $\gamma$, $\lfloor T_{\vec{v}} \rfloor$ is a T-path. In fact, the operation  $\lfloor \cdot \rfloor$ defined above gives a bijection between T-walks and T-paths. Furthermore we have:
\begin{equation*}
    x(T_\epsilon)=x(\lfloor T_\epsilon \rfloor).
\end{equation*}
\end{prop}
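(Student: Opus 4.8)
\textbf{Proof proposal for Proposition~\ref{T-bijection}.}

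The plan is to prove this in three parts: first that $\lfloor T_{\vec v}\rfloor$ is always a valid $T$-path, second that $\lfloor\cdot\rfloor$ is a bijection, and third the weight identity, which will essentially come for free along the way. For the first part, I would start by recording the basic structural features of a $T$-walk: it alternates $\alpha$-steps (crossing $\gamma$) with $\beta$-steps (connector arcs within a triangle $\triangle_i$), it respects the order of crossings along $\gamma$, and the only repetitions that can occur are of the ``back-and-forth'' type, i.e.\ a $\beta_i$ immediately followed (after one $\alpha$-step, or directly) by the reverse of the same edge. The key combinatorial observation is that since $\gamma$ is an ordinary arc, no crossing $e_i$ is repeated, so the $\alpha_i$ are forced by $\vec v$ and cannot themselves be cancelled against each other; only consecutive $\beta$-edges (possibly separated by a degenerate $\alpha$-step where $\alpha_i$ and $\alpha_{i+1}$ share a crossing) get removed. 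I would then argue that after all cancellations (1) the result still starts and ends where $\gamma$ does, (2) it has odd length — here I would track parity: each cancellation removes two edges, so parity of the length is preserved, and I need to check the original $T$-walk has odd length in the ordinary-arc case (this follows from the $\alpha$'s occupying a single parity class of positions and the endpoints matching up), (3) no arc is reused as it follows $\gamma$, which is exactly what cancellation of the redundant back-and-forths achieves, and (4) the even-step-crossing condition is inherited. The cleanest route for (3) and (4) together is to show that $\lfloor T_{\vec v}\rfloor$ visits the triangles $\triangle_0,\triangle_1,\dots,\triangle_n$ in order, using each internal arc at most once, which mirrors the defining data of a $T$-path directly from Schiffler--Thomas.

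For the bijection, I would construct the inverse explicitly: given a $T$-path $\alpha=(\alpha_1,\dots,\alpha_m)$ for the ordinary arc $\gamma$, read off from it the direction vector $\vec v\in\{0,1\}^n$ by recording, for each of the $n$ crossings of $\gamma$, which endpoint the $T$-path's crossing edge points toward (relative to the orientation of the ambient triangle, per the convention fixed before Lemma~\ref{lem:minimal direction}). Then I would show that the $T$-walk $T_{\vec v}$ produced by the construction, once reduced, returns $\alpha$ — and conversely that starting from $T_{\vec v}$, reducing, and re-reading the direction gives back $\vec v$. The point is that the reduction $\lfloor\cdot\rfloor$ never touches the $\alpha$-steps (they carry the crossings, which are all distinct for an ordinary arc), so the direction data is preserved by reduction; hence $\vec v\mapsto\lfloor T_{\vec v}\rfloor$ and $\alpha\mapsto\vec v(\alpha)$ are mutually inverse. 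I would also need that every $\vec v$ arising from a $T$-path actually yields a valid $T$-walk (no obstruction in choosing the $\beta_i$); this should follow because a genuine $T$-path already furnishes the connectors, so the uniqueness of the connecting arc in each triangle forces $T_{\vec v}$ to exist.

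The weight identity $x(T_{\vec v})=x(\lfloor T_{\vec v}\rfloor)$ is then immediate: a single cancellation removes a $\beta$-edge $b$ together with its reverse $b$, contributing $x_b\cdot x_b$ to the numerator, but one of the two occurrences is at an odd step and the other at an even step of the alternating concatenation — wait, I should be careful here: in $x(T_{\vec v})$ all $\beta_i$ sit in the numerator and all $\alpha_i$ in the denominator, so a back-and-forth pair of $\beta$'s contributes $x_b^2$ to the numerator and, if it is flanked by a degenerate $\alpha$-step with some $x_{\alpha}$, that also must be examined. I expect the honest statement is that a redundant pair in a $T$-walk contributes a factor of $x_b/x_b=1$ once one matches the alternating odd/even bookkeeping of the $T$-path weight $x(\alpha)=\prod_{i\text{ odd}}x_{\alpha_i}\prod_{i\text{ even}}x_{\alpha_i}^{-1}$ with the numerator/denominator bookkeeping of $x(T_{\vec v})$; so each cancellation leaves the monomial unchanged, and induction on the number of cancellations finishes it.

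\textbf{Anticipated main obstacle.} The delicate point is the bijectivity — specifically, verifying that distinct $T$-walks reduce to distinct $T$-paths and that every $T$-path is hit. The parity/endpoint bookkeeping needed to see that reduction leaves the $\alpha$-steps (hence the direction vector) intact, and to see that the reduced object satisfies the ``odd length'' and ``at every even step it crosses $\gamma$'' axioms exactly, is where the real work lies; in particular one must rule out the possibility that a $T$-walk could reduce to something whose crossing-steps land on the wrong parity, which would require a careful analysis of how the optional $\beta_0$ and $\beta_n$ interact with the cancellations at the two ends. Everything else is essentially formal once the structure of $T$-walks for ordinary arcs (all crossings distinct, only back-and-forth redundancies) is pinned down.
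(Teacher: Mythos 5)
Your structural premise about which entries cancel is wrong, and the error propagates into the two substantive parts of your argument. In a $T$-walk the $\beta$'s are never adjacent to one another (they alternate with the $\alpha$'s), and for an ordinary arc the connector $\beta_i$, being a side of $\triangle_i$, can only coincide with the crossed arcs $e_i$ or $e_{i+1}$; hence every back-and-forth redundancy consists of one $\alpha$-step and one adjacent $\beta$-step on the same arc, and the reduction \emph{does} delete $\alpha$-steps. Concretely, in Example~\ref{ex:Twalk} the walk $T_{(1,1,1)}=(8,1,1,2,3,3,5)$ reduces to the $T$-path $(8,2,5)$: the crossing steps $\alpha_1$ and $\alpha_3$ are removed. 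This is the opposite of your claims that the $\alpha_i$ ``cannot themselves be cancelled'' and that ``the reduction never touches the $\alpha$-steps.'' As a consequence, your proposed inverse map --- read off $\vec v$ by recording, for each of the $n$ crossings of $\gamma$, the direction of the $T$-path's crossing edge --- is not even defined: a $T$-path need not cross $\gamma$ at all $n$ crossings (for $(8,2,5)$ only the second crossing survives as a step), so there is no step from which to read $v_1$ and $v_3$. The bijectivity, which you correctly single out as the delicate point, is therefore not established. Note that the paper does not prove it head-on either: it deduces it from the fact that both $T$-walks and $T$-paths are in bijection with order ideals of $P_\gamma$ (Theorems~\ref{thm:texpansion}, \ref{thm:poset extension} and \ref{thm:bijection}); any direct inverse would have to reconstruct the directions of the crossings that no longer appear as steps from the global shape of the $T$-path, which your recipe does not do.

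The weight identity suffers from the same misdiagnosis. In $x(T_{\vec v})=\prod x_{\beta_i}/\prod x_{\alpha_i}$ both members of a $\beta$--$\beta$ pair would sit in the numerator, so if cancellations really removed two $\beta$'s the value would change by a factor $x_b^{-2}$ and the claimed identity would fail; your hedged ``I expect the honest statement is $x_b/x_b=1$'' is the right conclusion but is incompatible with your own description of the cancellation. The correct reason, and the one the paper uses, is that each cancelling pair is one $\alpha$ (denominator) and one $\beta$ (numerator) carrying the same label, so every cancellation multiplies $x(T_{\vec v})$ by $x_b/x_b=1$; moreover, since cancellations remove consecutive pairs, in the reduced walk the even positions are exactly the surviving $\alpha$'s and the odd positions the surviving $\beta$'s, which matches the odd/even convention in $x(\alpha)$ for $T$-paths. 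Your part (1) is salvageable along the lines you sketch once two facts are in place: for an ordinary arc both end connectors $\beta_0,\beta_n$ exist (the endpoints of $\gamma$ do not lie on $e_1$ or $e_n$) and can never cancel, so the walk has the form $(\beta_0,\alpha_1,\beta_1,\dots,\alpha_n,\beta_n)$ of odd length; and the cancellations are the $\alpha$--$\beta$ pairs described above, which is what guarantees no arc is reused and at least one crossing step survives.
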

\begin{proof} First note that as $\gamma$ does not contain any notches (and arcs are considered up to isotopy) the end points of $\gamma$ are not on the arcs $\alpha_1$ and $\alpha_n$. That means all T-walks corresponding to $\gamma$ are of form $ T_\epsilon=(\beta_0,\alpha_1,\beta_1,\ldots,\alpha_n,\beta_n$).

As cancellations always involve consecutive pairs, $\lfloor  T_\epsilon \rfloor$ is of odd length and every even step is given by an $\alpha_i$ entry correponding to a crossing of $\gamma$. 

Furthermore, as we are considering only the minimal number crossings $\gamma$ up to isotopy, it does not revisit any triangle of the triangulation. That means do not reuse any $\alpha_i$ and any $\beta_i$ can only match $\alpha_{i-1}$ or $\alpha_{i+1}$ in which case they get cancelled out. Note that $\beta_0$ and $\beta_n$ can not be cancelled. That means no arc is reused, and $\lfloor  W(\epsilon) \rfloor$ crosses $\gamma$ at least once. 

Pairwise cancellations of matching entries do not affect the $x(T_\epsilon)$ value, $x(T_\epsilon)$ is given by $x$ variables corresponding to even indexed entries of $\lfloor  T_\epsilon \rfloor$ divided by $x$ variables corresponding to odd indexed entries of $\lfloor  W(\epsilon) \rfloor$.

    The fact that this correspondence is a bijection can be recovered from Theorem~\ref{thm:poset extension} and Theorem~\ref{thm:texpansion}, where we show both models are in bijection with poset ideals.

\end{proof}

\begin{example} The corresponding $T$-paths to the $T$-walks listed in Example~\ref{ex:Twalk} are as follows.

\begin{itemize}
    \item $(7,1,3,4)$,
    \item $(8,1,6,3,4)$,
    \item $(8,1,6,2,4,3,5)$,
    \item $(7,1,4,3,5)$,
    \item $(8,2,5)$.
\end{itemize}
\end{example}

\begin{figure}[H]
    \centering
    \includegraphics[width=16cm]{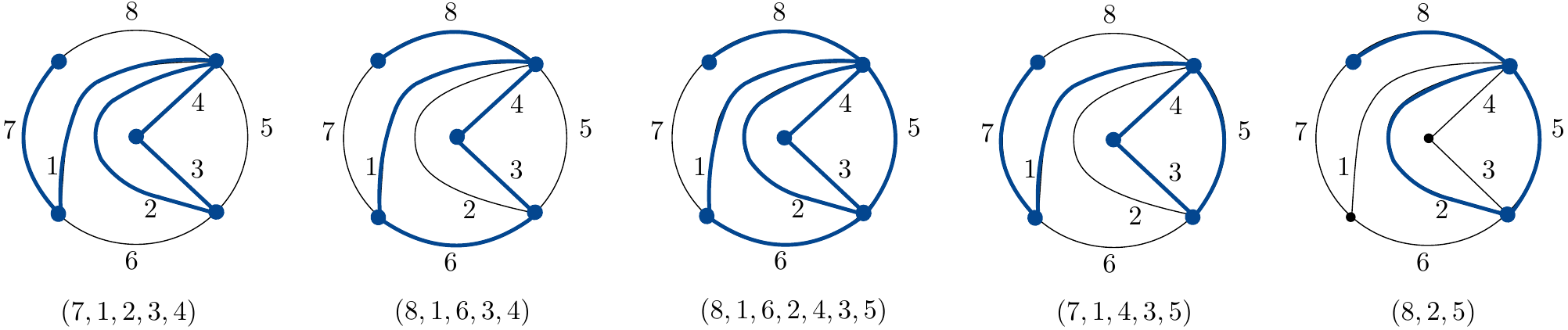}
    \caption{T-paths drawn on the disk.}
    \label{fig:twalks3}
\end{figure}

\section{Expansions via labeled posets}~\label{sec:labeled_posets}

In this chapter, we describe an alternative way to calculate the expansion of an arc by defining a corresponding labeled poset. Once this is achieved, the actual calculations can be done easily by matrix multiplication, using the theory of oriented posets.

Let $P$ denote a partially ordered set; we call $P$ a \emph{poset} for short. An \emph{order ideal} $I$ in $P$ is a down-closed subset of $P$, i.e. for every $x\in I$, $y$ is also in $I$ if and only if $y\leq x$ for $y\in P.$ 

Let $P$ be a poset and $J(P)$ be the order ideal lattice of $P.$  We will associate each node in $P$ by a variable $w_i$ called the \definition{weight} of $i$, and for each $I\in J(P)$, we will let the \definition{weight} of $I$, denoted $w(I)$, be given by the product of the weight of the vertices in $I$. The \definition{weight polynomial} of $P$ is defined by:
  	\begin{align*}
		\rank(P;w)=& \sum_{I\in J(P)} \prod_{i\in I}w_i= \sum_{I\in J(P)} w(I).
	\end{align*}

  Of particular interest to us are a family of posets called \emph{fence posets}. For a composition $\alpha=(\alpha_1,\alpha_2,\ldots,\alpha_k)$ of $n$, the corresponding \definition{fence poset } is a poset on $n+1$ elements, whose Hasse diagram is given by $\alpha_1$ up steps, followed by $\alpha_2$ down steps, followed by $\alpha_3$ up steps...
	\begin{figure}[ht]
	\begin{tabular}{c c}
	\begin{tikzpicture}[scale=.8]
\draw (0,0)--(1.5,1)--(6,-2);
\fill (0,0) circle(.1) node[above,yshift=.17cm] {$1$} ;
\fill (1.5,1) circle(.1) node[above,yshift=.17cm] {$2$} ;
\fill (3,0) circle(.1) node[above,yshift=.17cm] {$3$} ;
\fill (4.5,-1) circle(.1) node[above,yshift=.17cm] {$4$} ;
\fill (6,-2) circle(.1) node[above,yshift=.17cm] {$5$} ;
\end{tikzpicture}& \raisebox{15mm}{\begin{tabular}{c} The poset on the left has the  ideals:\\
\\ $\varnothing$, $\{1\}$, $\{5\}$, $\{1,5\}$,\\ $\{4,5\}$, $\{1,4,5\}$, $\{3,4,5\}$,\\ $\{1,3,4,5\}$, $\{1,2,3,4,5\}$. 
\end{tabular}}
	\end{tabular}
	    \centering

\caption{The fence poset for $\alpha=(1,3)$ (left) and its ideals (right).}\label{fig:ex1}
\end{figure}
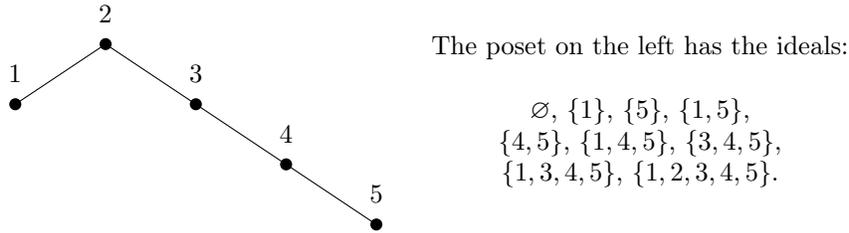

\begin{example}\label{ex:1} The poset given in Figure~\ref{fig:ex1} has nine ideals in total, listed next to it. Accordingly, one gets the weight polynomial:
\begin{dmath*}
1+w_1+w_5+w_1w_5+w_4w_5+w_1w_4w_5+w_3w_4w_5+w_1w_3w_4w_5+w_1w_2w_3w_4w_5.
\end{dmath*}
\end{example}


Consider a triangulation $T$ and an arc $\gamma$ which is not in the triangulation. Label the crossings of $\gamma$ by $a_1,a_2,\ldots,a_m$ in order, where at crossing $a_i$, $\gamma$ crosses an arc $e_i$ from a triangle $\triangle_{i-1}$ to a triangle $\triangle_i.$ Note that the same arc may be crossed multiple times and the triangles visited are not necessarily distinct). 

We put a poset structure $P_{\gamma}$ on the set of crossings $\{a_1,a_2,\ldots,a_m\}$ by taking the transitive closure of the following relations:

\begin{itemize}
    \item For each consecutive pair $a_i$ to $a_{i+1}$, add the relation $a_i\succeq a_{i+1}$ if the arc $e_{i+1}$ follows the arc $e_{i}$ in the counterclockwise orientation in $\triangle_i$, $a_i \preceq a_{i+1}$ otherwise. 
(See Section \ref{sec:selffold} for how this is resolved in case of self-folded triangles.) 
\item If the arc is a loop, also add the relation $a_1\succeq a_{n}$ if the arc $e_{1}$ follows the arc $e_{n}$ in the counterclockwise orientation in $\triangle_n$, $a_n \preceq a_{1}$ otherwise. 
\item If the crossings $a_1,a_2,\ldots,a_{t_1-1}$ (resp. $a_{t_2+1},a_{t_2+2},\ldots,a_m$)  are formed by the resolution of a notch in the sense of Wilson \cite{wilson}, add a relation for the pair $a_1,a_{t_1}$ (resp. $a_{t_2}$, $a_k$) using the same procedure.
\end{itemize}

Our definition extends the definition given in \cite{ST09,S10} to include loops and notched graphs. 

Now we assign a labeling to the vertices of $P_\gamma$: For each vertex $a_i$, we define a monomial $x(a_i)$ and $y(a_i)$ as follows. The monomial $x(a_i)$ is given by

$$\displaystyle x(a_i):=\frac{x_{\alpha} x_{\beta}}{x_{\sigma} x_{\epsilon}},$$

where $\alpha$ and $\beta$ are the labels of the arcs that follow the arc $e_i$ counterclockwise and $\sigma$ and $\epsilon$ are the arcs that follow $e_i$ clockwise in the triangles $\triangle_i$ and $\triangle_{i-1}$. (Any contribution from arcs in a self-folded triangle can be taken to be $1$ because of cancellations.)

We set $y(a_i)=y_{e_i}$ unless $e_i$ is a radius of a self folded triangle. If $e_i$ is a radius, we define $y(a_i)=y_\ell/y_{e_i}$ where $\ell$ is the label of the loop around $a_i$. See Section~\ref{sec:selffold} for more on the subject.



Now we are ready to state the second main result of this work.

\begin{thm}~\label{thm:poset extension} Let $(S,M)$ be a as usual and $\gamma$ an be arc on a triangulation $T$ of $(S,M)$. Let $P_\gamma$ be the labeled poset associated to $\gamma$. Then the expansion of $x_{\gamma}$ with respect to the triangulation $T$ is given by:
\begin{align}x_{\gamma}=\displaystyle {x(T_0)} \, \, \rank(P_\gamma;xy)
\end{align}\label{eqn:main}
where $T_0$ is the minimal $T$-walk of $\gamma$.
\end{thm}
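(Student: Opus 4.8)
The plan is to deduce Theorem~\ref{thm:poset extension} from the already-established T-walk expansion (Theorem~\ref{thm:texpansion}) by exhibiting a weight-preserving bijection between the set $\mathrm{TW}(\gamma)$ of T-walks of $\gamma$ and the set $J(P_\gamma)$ of order ideals of the labeled poset $P_\gamma$. Recall that a T-walk is indexed by a direction vector $\vec{v}\in\{0,1\}^n$, and that not every $\vec{v}$ yields a valid walk. So the first step is to characterize exactly which $\vec{v}$ are valid. I expect the valid directions to be precisely those for which the set $I_{\vec v} := \{a_i : v_i = 1\}$ is an order ideal of $P_\gamma$: the poset relation $a_i \succeq a_{i+1}$ was defined exactly from the local combinatorics (whether $e_{i+1}$ follows $e_i$ counterclockwise in $\triangle_i$) that governs, via Lemma~\ref{lem:minimal direction}, whether $\beta_i$ can be chosen to connect the endpoint of $\alpha_i$ to the start of $\alpha_{i+1}$. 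The obstruction to forming $\beta_i$ (the two points coinciding) should translate into the forbidden configuration ``$a_{i+1}\in I$ but $a_i\notin I$ when $a_i\succeq a_{i+1}$'' — i.e., the complement-of-a-down-set condition. This is the conceptual heart; once it is in place, the map $\vec v \mapsto I_{\vec v}$ is manifestly a bijection $\mathrm{TW}(\gamma)\to J(P_\gamma)$.

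The second step is to match weights. By definition $y(T_{\vec v}) = \prod_{v_i=1} y_{\alpha_i} = \prod_{a_i\in I_{\vec v}} y_{e_i} = \prod_{a_i \in I_{\vec v}} y(a_i)$, so the $y$-part is immediate (modulo the self-folded bookkeeping of Remark~\ref{rmk:1}, which is already baked into the definition of $y(a_i)$). For the $x$-part I would compare $x(T_{\vec v}) = \prod_i x_{\beta_i}/\prod_i x_{\alpha_i}$ with $x(T_0)\cdot\prod_{a_i\in I_{\vec v}} x(a_i)$. Here $x(T_0)$ is the weight of the minimal T-walk (the one for $\vec v=\vec 0$), so it suffices to show
\[
\frac{x(T_{\vec v})}{x(T_0)} \;=\; \prod_{a_i\in I_{\vec v}} x(a_i)\;=\;\prod_{a_i\in I_{\vec v}}\frac{x_\alpha x_\beta}{x_\sigma x_\epsilon}.
\]
The numerators of both $x(T_{\vec v})$ and $x(T_0)$ differ only in the connector arcs $\beta_i$ (the $\alpha_i$ denominators are identical and equal to $cross(T,\gamma)$). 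I would argue this by a telescoping/local-change computation: flipping $v_i$ from $0$ to $1$ re-routes the walk through the triangles $\triangle_{i-1}$ and $\triangle_i$, changing the two connector arcs incident to $\alpha_i$ from the ``clockwise-following'' arcs $\sigma,\epsilon$ to the ``counterclockwise-following'' arcs $\alpha,\beta$ — exactly the ratio $x(a_i)$. The subtlety is that when several consecutive $v_i$ are set to $1$, the connector arcs shared between $\alpha_i$ and $\alpha_{i+1}$ should be accounted once; I expect this to work out because in the product $\prod x(a_i)$ the shared arc appears once in a numerator and once in a denominator of adjacent factors and cancels, mirroring the fact that in the walk only one $\beta$ sits between $\alpha_i$ and $\alpha_{i+1}$.

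The main obstacle is the local case analysis establishing the validity criterion and the single-flip weight change: one must carefully track orientations in each triangle $\triangle_i$, handle the endpoint/boundary arcs $\beta_0,\beta_n$ (which never cancel and contribute to $x(T_0)$, not to any $x(a_i)$), and treat the loop case (the extra relation $a_1$ vs.\ $a_n$ through $\triangle_n$) and the notched case (the relations coming from Wilson's resolution) on the same footing. I would organize this as a lemma: ``$\vec v$ is valid $\iff I_{\vec v}\in J(P_\gamma)$, and then $x(T_{\vec v})y(T_{\vec v}) = x(T_0)\prod_{a_i\in I_{\vec v}} x(a_i)y(a_i)$.'' Granting that lemma, summing over $\vec v$ and applying Theorem~\ref{thm:texpansion} gives
\[
x_\gamma = \sum_{T_{\vec v}\in\mathrm{TW}(\gamma)} x(T_{\vec v})y(T_{\vec v}) = x(T_0)\sum_{I\in J(P_\gamma)}\prod_{i\in I} x(a_i)y(a_i) = x(T_0)\,\rank(P_\gamma; xy),
\]
which is the claim. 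The self-folded case would be deferred to Section~\ref{sec:selffold} as the paper indicates, checking only that the ``$=1$'' cancellation claims for self-folded arcs are consistent on both sides.
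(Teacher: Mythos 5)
Your proposal has a structural problem: it is circular within the logic of this paper. You take Theorem~\ref{thm:texpansion} (the T-walk expansion) as ``already established'' and deduce Theorem~\ref{thm:poset extension} from it via the bijection $\vec v \mapsto I_{\vec v}$. But Theorem~\ref{thm:texpansion} is not independently established: its proof is explicitly postponed to Section~\ref{subsec:proof}, and there it is derived \emph{from} Theorem~\ref{thm:poset extension} together with the T-walk/ideal bijection (Theorem~\ref{thm:bijection}). The T-walk expansion in the generality needed here (punctured surfaces, singly and doubly notched arcs, closed loops) is a new result of this paper; the T-path results you could import from the literature (Schiffler--Thomas, Schiffler, Gunawan--Musiker) cover only the unpunctured and once-punctured-disk cases. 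So the step that actually ties the combinatorial sum to the cluster variable $x_\gamma$ is missing from your argument: what you have constructed is (essentially) the content of Theorem~\ref{thm:bijection} plus the weight bookkeeping used later to prove Theorem~\ref{thm:texpansion}, i.e.\ you have described the implication in the reverse direction of the one the paper needs at this point.

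The paper's anchor is different: it grounds the theorem in the known matching expansion formula for snake, band and loop graphs (Theorem~\ref{thm:snake}, from Musiker--Schiffler--Williams and Wilson). Concretely, it uses the correspondence between perfect (good) matchings of the graph and order ideals of $P_\gamma$, shows that twisting a tile $i$ multiplies $x(M)$ by the local ratio $x(i)$ of Equation~\ref{eqn:x} (so $x(M)=x(M_-)\prod_{i\in h(M)}x(i)$ and $y(M)=\prod_{i\in h(M)}y(i)$), sums over matchings, and then converts $x(M_-)/cross(T,\gamma)$ into $x(T_0)$ via Proposition~\ref{prop:minimalexists}, which proves that the minimal direction always yields a valid T-walk whose weight equals $x(M_-)/cross(T,\gamma)$. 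If you want to keep your route, you would have to supply an independent proof of the T-walk expansion (for instance by generalizing the hyperbolic-geometry or matching arguments behind the T-path theorems to the notched and loop cases), which is precisely the work your proposal defers; otherwise the local analysis you outline — valid $\vec v$ iff $I_{\vec v}\in J(P_\gamma)$, and the single-flip ratio $x(a_i)$ — should instead be packaged as it is in the paper: as the bridge from the poset formula to the T-walk formula, not the other way around.
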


\begin{remark}\label{remark:clusterversion} For the readers more familiar with the language of snake graphs, Equation~\ref{eqn:main} can be reformulated as 
\begin{align*} 
x_{\gamma}= \displaystyle \frac{x(M_{-})}{cross(T,\gamma)} 
 \rank(P_{\gamma};xy).
\end{align*} 
\end{remark}

To give a straightforward proof of Theorem~\ref{thm:poset extension}, we will make use of the theory of snake graphs, which we will recall in the next subsection, where the notation from the above remark will also be made clear. We end this subsection with the poset calculations for the example from Figure~\ref{fig:twalks}.

\begin{figure}[H]
    \centering
    \includegraphics[width=3.5cm]{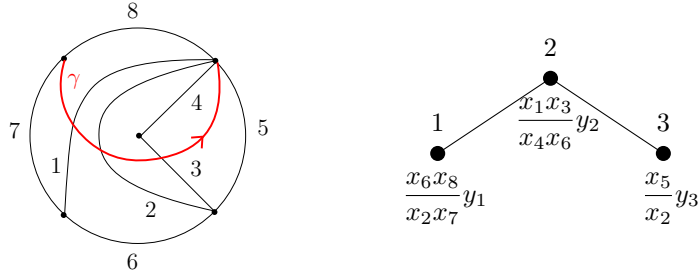} \qquad \qquad\raisebox{.5cm}{ \begin{tikzpicture} 
\fill (3,0) circle(.1)node[above,yshift=.17cm] {$1$} ;
\draw (3.1,-.6) node{$\displaystyle \frac{x_{6} x_{8}}{x_2 x_{7}}y_1$};
\fill (4.5,1) circle(.1) node[above,yshift=.17cm] {$2$} ;
\draw (4.6,.4) node{$\displaystyle \frac{x_{1}x_{3}}{x_{4}x_{6}}y_2$} ;
\fill (6,0) circle(.1) node[above,yshift=.17cm] {$3$} ;
\draw (6.1,-.6) node{$\displaystyle \frac{x_{5}}{x_{2}}y_3$} ;
\draw (3,0)--(4.5,1)--(6,0);
    \end{tikzpicture}}
    \caption{The arc $\gamma$ from Figure~\ref{fig:twalks} with its corresponding labeled poset.}
    \label{fig:twalks4}
\end{figure}
\begin{example}  \label{ex:twalksviaposets} The poset given on the right in Figure~\ref{fig:twalks4} has $5$ ideals: $\varnothing$, $\{1\}$, $\{3\}$, $\{1,3\}$, $\{1,2,3\}$. It has the weight polynomial
  	\begin{align*}
		\rank(P_{\gamma};w)= 1 +w_1+w_3+w_1w_3+w_1w_2w_3.
	\end{align*}
 Plugging in the labels gives us:
   	\begin{align*}
		\rank(P_{\gamma};xy)= 1 +{\displaystyle \frac{x_{6} x_{8}}{x_2 x_{7}}y_1}+\frac{x_{5}}{x_{2}}y_3+ \frac{x_5 x_{6} x_{8}}{x_2^2 x_{7}}y_1 y_3+ \frac{x_1 x_3 x_5 x_{8}}{x_2^2 x_4 x_{7}}y_1 y_2 y_3.
	\end{align*}
Multiplying this with our previously calculated value $x(T_0)=(x_2x_4x_7)/(x_1x_3)$ from Example~\ref{ex:twalksviatwalks} gives:
  	\begin{align*}
		\displaystyle x_\gamma &= \frac{x_2x_4x_7}{x_1x_3} +{\frac{x_4 x_{6} x_{8}}{x_1 x_{3}}y_1}+\frac{x_4 x_5 x_{7}}{x_{1}x_3}y_3+ \frac{x_4 x_5 x_{6} x_{8}}{x_1 x_2 x_{3}}y_1 y_3+ \frac{x_5 x_{8}}{x_2}y_1 y_2 y_3\\
  \displaystyle &= \frac{1}{x_1 x_2 x_3} \left( x_2^2x_4x_7 +x_2x_4x_6x_8y_1+x_2x_4x_5x_7y_3+x_4x_5x_6x_8y_1y_3+x_1x_3x_5x_8y_1y_2y_3\right).
	\end{align*}
 Note that this matches our calculations from Example~\ref{ex:twalksviatwalks}.
\end{example}













This theorem is written an over-simplified form here, as discussed in Remarks~\ref{rmk:1} and \ref{rmk:2} above For self-folded triangles in the initial triangulation, coefficient monomial $y(M)$ is expressed differently, see Section~\ref{sec:selffold} below. The interested reader can refer to \cite[Theorem 5.4]{MSW11} for snake graphs, \cite[Theorem 5.7]{MSW11} for band graphs and \cite[Theorem 7.9]{wilson} for loop graphs. 

\subsection{Connection to order ideals of posets}

The perfect matchings of a snake (band or loop) graph $G$ are in bijection with the order ideals of a poset $P$, where snake graphs correspond to fence posets. The correspondence between a snake graph and a fence poset is as follows: Diagonals inside the tiles of a snake graph are the vertices of the poset and edges are between consecutive tiles: The direction of an edge is obtained as we go along the tiles by changing direction only if the tiles are align in the same direction in the snake graph.

We note that as we can read fence posets from snake graph, we can also read the corresponding fence posets from triangulations of the surface by putting one vertex for each diagonal in the triangulation and drawing a directed edge between vertices with respect to the orientation of the surface if they are the sides of the same triangle. On a related note, we can say that order ideals in fence posets can be thought as the submodules of a certain module over the quivers of the fence posets. We refer reader to \cite{CS21} to find out more about perfect matchings of a snake graph versus submodules of quivers of fence posets.

Any gluing on a snake graph to obtain band or loop graph of $G$ will introduce new directed edges between the corresponding vertices in the fence poset. See Examples~\ref{fig:band},~\ref{singlenotched},~\ref{doublynotched},~\ref{doubly2}. We note that posets corresponding to band or loop graphs can be easily obtained from the triangulated surface as well. 

The set of perfect (good) matchings of a snake (band or loop) graph $G$ has a poset structure, where two perfect matchings are connected if one can be obtained from the other by rotating two markings in a tile. This poset is isomorphic to the poset of order ideals (with respect to inclusion) in the corresponding fence poset. One can say that counting perfect matchings of a graph translates into counting order ideals of a fence poset. 

\begin{figure}[H]
\includegraphics[width=11cm]{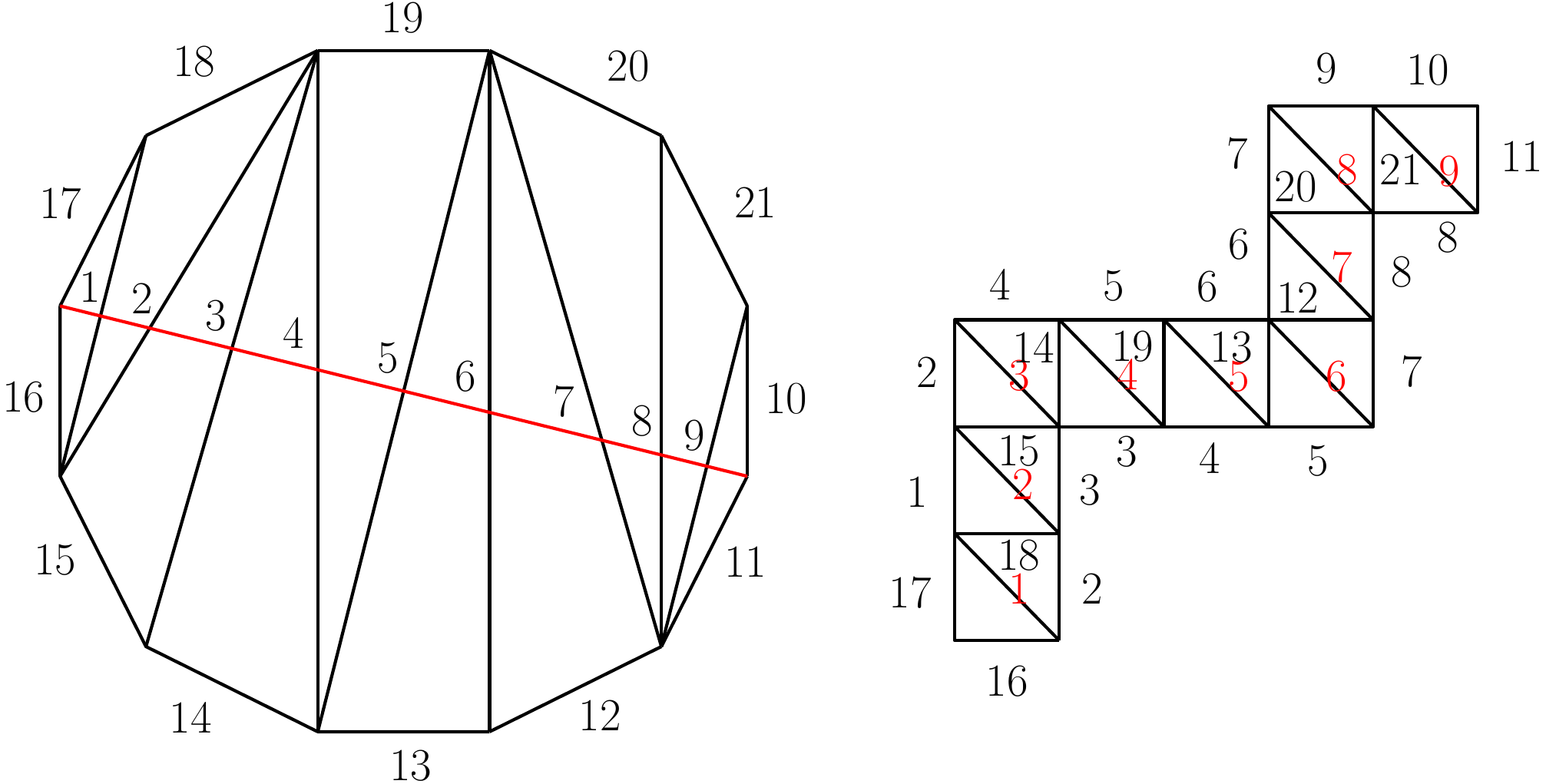}
\caption{An example of an arc $\gamma$ in an initial triangulation and its snake graph} \label{fig:sch10}
\end{figure}

\begin{figure}[H]
\includegraphics[width=9cm]{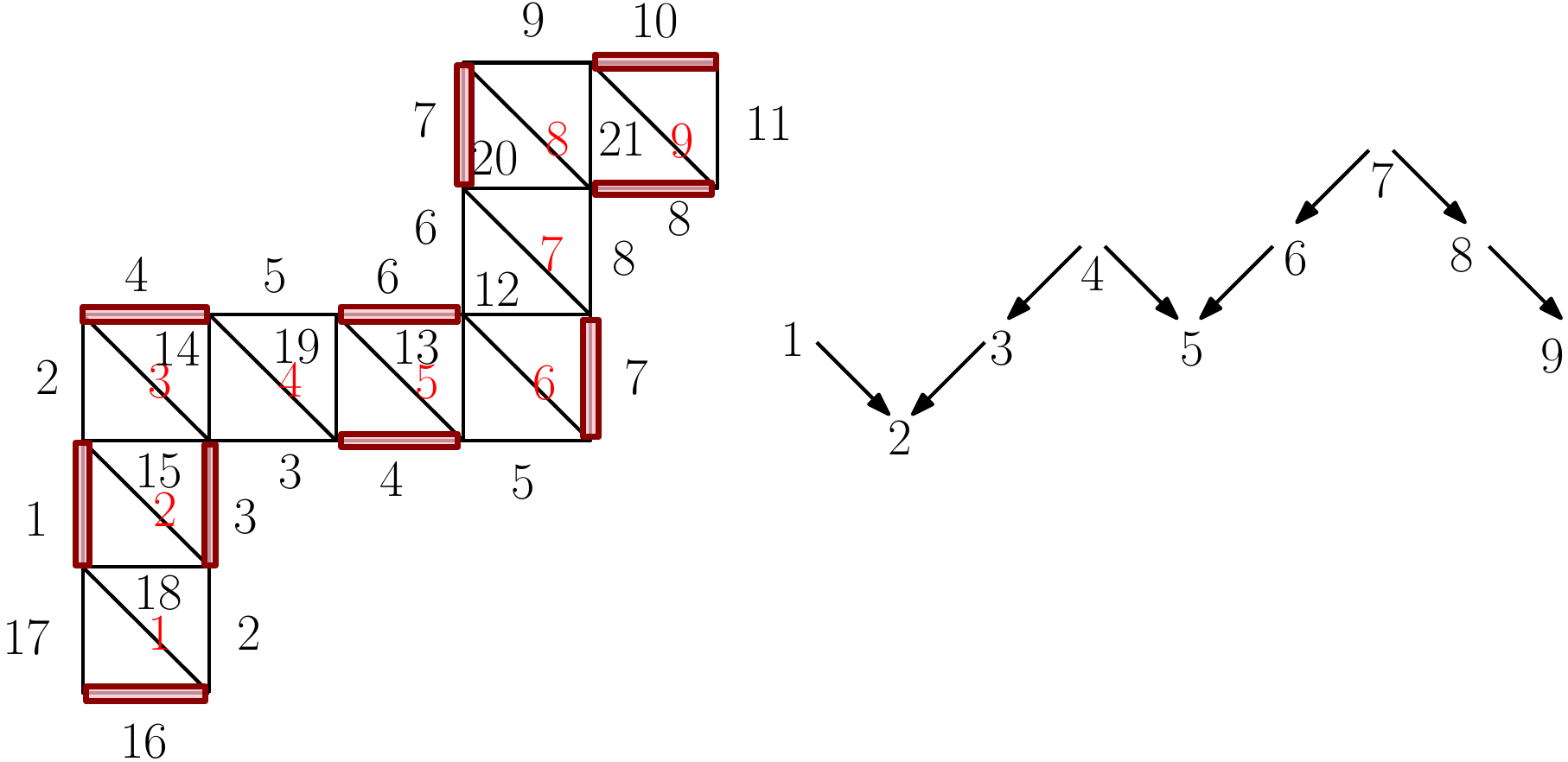}
\caption{The minimal perfect matching and the fence poset associated to the arc $\gamma$} \label{fig:min}
\end{figure}

\subsection{Proof of Theorem~\ref{thm:poset extension}}

Consider a poset coming from a snake graph $G$ with vertices in $[k]$. Let $N(i)$, $S(i)$, $E(i)$, $W(i)$ and $D(i)$ denote the labels of the north, south, east, west and diagonal edges of the tile $G_i$ of $G$ that is matched to vertex $a_i$ of the poset. Note that in snake graphs we have the additional structure of signs, where boxes with sign $-$ are flipped. The $x$ and $y$ variables we have previously defined for the vertices can be described as follows in terms of the snake graph labels:
\begin{align}
x(a_i)&= \begin{cases}\displaystyle \frac{x_{E(i)}x_{W(i)}}{x_{N(i)}x_{S(i)}} &\text{if box $G_i$ has sign $+$,}\\ \\
\displaystyle \frac{x_{N(i)}x_{S(i)}}{x_{E(i)}x_{W(i)}} &\text{if box $G_i$ has sign $-$}
\end{cases} \label{eqn:x}\\
y(a_i)&:= \begin{cases}
 y_{r}y^{-1}_{l} &\text{if $D(i)=r$ is a radius and $l$ is the loop around it}\\ 
 y_{D(i)}& \text{otherwise.} 
\end{cases} \label{eqn:y}
\end{align}

\begin{prop} \label{prop:minimalexists} Consider a curve $\gamma$. The $T$-walk $T_0$ for the minimal direction always exists. Furthermore, if $M_-$ is the minimal matching of the snake graph for $\gamma$, then $x(T_0)=\frac{x(M_-)}{cross(T,\gamma)}$ 
\end{prop}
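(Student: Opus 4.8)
The plan is to prove the two assertions of Proposition~\ref{prop:minimalexists} separately: first that the minimal $T$-walk $T_0$ exists for any curve $\gamma$, and then that its weight matches $x(M_-)/cross(T,\gamma)$. For the existence of $T_0$, I would argue directly from the construction of the $T$-walk. Recall that in the minimal direction $\vec 0$, Lemma~\ref{lem:minimal direction} tells us that for consecutive crossings $a_i, a_{i+1}$, the arcs $e_i$ and $e_{i+1}$ are either both directed toward their shared vertex in $\triangle_i$ or both directed away from it. In either case the endpoint of $\alpha_i$ and the starting point of $\alpha_{i+1}$ are the \emph{same} vertex of $\triangle_i$, so the obstruction that kills invalid $T$-walks (no arc in $\triangle_i$ joining two coinciding points) never arises: one simply takes $\beta_i$ to be the degenerate/empty step, i.e.\ there is no $\beta_i$ needed. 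The same reasoning handles $\beta_0$ and $\beta_n$ at the two ends (and, if $\gamma$ is a loop, $\beta_n$ in $\triangle_0$ — here one must check the loop relation in Lemma~\ref{lem:minimal direction} likewise forces the coincidence). Hence $T_0$ always exists, and moreover it is precisely the concatenation $(\beta_0)\alpha_1\alpha_2\cdots\alpha_n(\beta_n)$ with all interior $\beta_i$ absent.

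For the weight computation, the cleanest route is to translate everything into the snake graph. Since $cross(T,\gamma) = \prod_i x_{\alpha_i}$ by the remark following the $T$-walk definition, and $x(T_0) = \bigl(\prod x_{\beta_i}\bigr)/\bigl(\prod x_{\alpha_i}\bigr)$ with the interior $\beta_i$ absent, we get $x(T_0) = x_{\beta_0} x_{\beta_n}/cross(T,\gamma)$ (interpreting an absent $\beta$ as contributing $1$). So it remains to identify $x_{\beta_0} x_{\beta_n}$ with $x(M_-)$, the weight of the minimal perfect matching of the snake graph $G$ for $\gamma$. Here I would use the standard dictionary between the triangles $\triangle_0,\ldots,\triangle_n$ crossed by $\gamma$ and the tiles of $G$: the minimal matching $M_-$ consists only of boundary edges of $G$, chosen (by the convention stated in the excerpt) starting from the south edge of the first tile and then alternating. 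Tracing which boundary edges of $G$ this selects, one finds they correspond exactly to the two "outer" arcs $\beta_0$ and $\beta_n$ emanating from the endpoints of $\gamma$ into $\triangle_0$ and $\triangle_n$ — all the intermediate boundary edges picked by $M_-$ are the arcs $\alpha_i$ that get cancelled, or contribute trivially. Matching the labels $N(i),S(i),E(i),W(i)$ against the arcs of the triangles via Equations~\eqref{eqn:x}, one checks tile-by-tile that the product of labels on $M_-$ telescopes down to $x_{\beta_0}x_{\beta_n}$.

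I expect the main obstacle to be the careful bookkeeping in this last identification, especially across sign changes of the tiles (the "$+$" vs "$-$" boxes) and at the two ends of the snake graph where the boundary-edge conventions for $M_-$ interact with whether $\beta_0$ or $\beta_n$ is present. One has to be consistent about orientation: a tile with sign $-$ is flipped, which swaps the roles of $\{N,S\}$ and $\{E,W\}$ in Equation~\eqref{eqn:x}, and one must verify that the minimal-direction convention $\vec 0$ is exactly the one that makes the selected boundary edges of $M_-$ line up with the counterclockwise-following arcs. A clean way to organize this is to do an induction on $n$, peeling off one tile at a time and checking that adding a crossing multiplies $cross(T,\gamma)$ by $x_{\alpha_{n+1}}$ while $x(M_-)$ and the endpoint data update consistently. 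The degenerate self-folded case can be deferred to Section~\ref{sec:selffold} as the paper does elsewhere, since the contributions from radius arcs cancel as noted.
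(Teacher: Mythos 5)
Your argument breaks down at its very first step, and the error propagates into the weight computation. You claim that in the minimal direction the endpoint of $\alpha_i$ and the starting point of $\alpha_{i+1}$ coincide, so that the interior $\beta_i$ can be taken to be empty. Lemma~\ref{lem:minimal direction} says the opposite: if both $\alpha_i$ and $\alpha_{i+1}$ point toward the shared vertex of $\triangle_i$, then the \emph{endpoint} of $\alpha_i$ is the shared vertex while the \emph{starting point} of $\alpha_{i+1}$ is the far endpoint of $e_{i+1}$ --- two distinct vertices of $\triangle_i$ (and symmetrically in the ``both away'' case, where the relevant points are the far endpoint of $e_i$ and the shared vertex). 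It is precisely this non-coincidence that guarantees a connecting arc $\beta_i$ exists, namely $e_{i+1}$ (resp.\ $e_i$) traversed backwards; coincidence of the two points is exactly the obstruction that kills a $T$-walk, as the paper spells out for the vectors $(0,1,1)$, $(1,1,0)$, $(0,1,0)$ in Example~\ref{ex:Twalk}, and the definition only allows $\beta_0$ and $\beta_n$ to be absent, never the interior $\beta_i$. So you have the existence mechanism backwards.

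Because of this, your weight identity is false. The minimal $T$-walk is not $(\beta_0)\alpha_1\cdots\alpha_n(\beta_n)$; it is, e.g., $(7,1,2,2,2,3,4)$ in Example~\ref{ex:Twalk} and $(16,1,1,2,3,3,4,4,4,5,6,6,7,7,7,8,8,9,10)$ in Figure~\ref{fig:minimalscompared}, with nontrivial interior $\beta_i$ each equal to some $a_i$ or $a_{i+1}$. Accordingly $x(T_0)=x_2x_4x_7/(x_1x_3)$ in the first example, not $x_4x_7/(x_1x_2x_3)$ as your formula $x_{\beta_0}x_{\beta_n}/cross(T,\gamma)$ would give. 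Likewise $x(M_-)$ cannot ``telescope down to $x_{\beta_0}x_{\beta_n}$'': a snake graph with $n$ tiles has $n+1$ edges in its minimal matching (e.g.\ $x(M_-)=x_1x_3x_4^2x_6x_7^2x_8x_{10}x_{16}$ in Example~\ref{ex:noloop2}), and the correct statement --- which is the heart of the paper's proof --- is that the $\beta$ edges of $T_0$, interior ones included, are in bijection with the edges of $M_-$, giving $\prod_i x_{\beta_i}=x(M_-)$, after which one divides by $cross(T,\gamma)=\prod_i x_{\alpha_i}$. To repair your proof you would need to replace the ``degenerate $\beta_i$'' claim by the observation that the minimal direction always produces distinct endpoints (hence $\beta_i$ exists and equals the crossed arc $a_i$ or $a_{i+1}$ reversed), and then match \emph{all} of these $\beta$'s, not just the two end ones, with the boundary edges selected by $M_-$.
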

\begin{proof}  By definition, $M_-$ can be obtained via directing the diagonals upwards if the orientation $+$, downwards if the orientation is $-$, and then connecting them.  Note that the diagonals in the snake graph model are the crossings $a_i$, and this as downwards squares are rotated, so this is equivalent to directing each crossing $i$ crosswise in $\triangle_{i-1}$ that we do for the minimal $T$ paths. The $\beta$ edges of the $T$-path that connect these crossing edges are exactly the edges in the snake path connecting the directed diagonals (See Figure~\ref{fig:minimalscompared} for an example). Thus, such edges exist and $\prod_i x_{\beta_i}=x(M_-)$. We divide both sides by $cross(T,\gamma)=\prod_i x_{\alpha_i}$ to get our result.
\end{proof}

\begin{figure}[H]
\includegraphics[width=11cm]{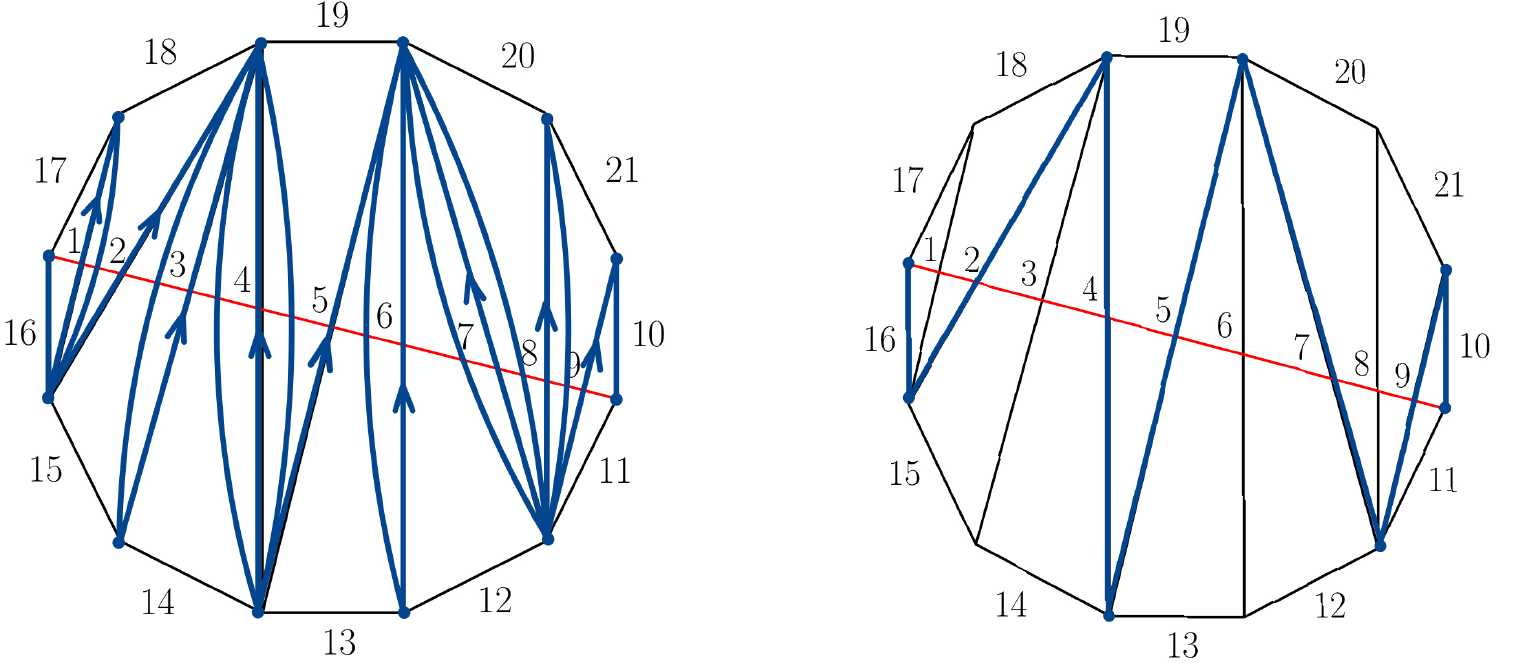}
\caption{The minimal $T$-walk $(16,1,1,2,3,3,4,4,4,5,6,6,7,7,7,8,8,9,10)$ and the minimal $T$-path for the minimal matching in Figure~\ref{fig:min}.} \label{fig:minimalscompared}
\end{figure}

Consider the evaluation of the weight polynomial at $w_i=x(i)y(i)$. We can use it to calculate the cluster expansion formula corresponding to an arc $\gamma$.

\begin{proof}[Proof of Theorem~\ref{thm:poset extension}] 
Let $M$ be a perfect (good) matching of the snake graph $G$, corresponding to the ideal $h(M)$ of the corresponding (fence) poset $P$. The idea here is that the lattices of the perfect matchings of $G$ and the order ideals in $P$ are the same~\cite{CS21}. Each edge in order ideal lattice is given by the label of each tile as in Equation~\ref{eqn:x}. We can obtain $M$ from the minimal matching $M_-$ by a series of moves associated to $i\in h(M)$. Each move $i$ corresponds to multiplying the $x(M)$ value by the rational function $x(i)$ defined in Equation~\ref{eqn:x} above. Consequently we have:
\begin{align*}
x(M)=x(M_-)\prod_{i\in h(M)}x(i).
\end{align*}
Similarly, we obtain coefficient variables by labeling each edge in the order ideal lattice by the y variable of the corresponding tile as we move up in the lattice. Thus, we obtain $y(M)=\prod_{i\in h(M)}y(i)$. Summing over all perfect (good) matchings $M$ or equivalently over all order ideals we get:
\begin{align*}
\sum_{M}x(M)y(M)=x(M_-)\sum_{I}\prod_{i\in I}x(i)y(i)=\rank(P_\gamma;xy).
\end{align*} Result follows by Theorem~\ref{thm:snake} and Proposition~\ref{prop:minimalexists}. \end{proof}

\begin{example}\label{ex:noloop2} Consider the example illustrated in figures \ref{fig:sch10}, \ref{fig:min} and \ref{fig:minimalscompared} above. The minimal matching gives us  $x(M_-)=x_1 x_3 x_4^2 x_6 x_7^2 x_8 x_{10} x_{16}$. One can also draw the corresponding fence poset, with each node $i$ labeled by its weight $x(i)y(i)$ as follows. 

 \begin{center}\scalebox{.8}{
\begin{tikzpicture}
\draw (0,0)--(1.5,-1)--(4.5,1)--(6,0)--(9,2)--(12,0); 
\fill (0,0) circle(.1) node[above,yshift=.17cm] {$1$} ;
\draw (.1,-.6) node{$\displaystyle \frac{x_2 x_{17}}{x_{16}x_{18}} y_1$} ;
\fill (1.5,-1) circle(.1) node[above,yshift=.17cm] {$2$} ;
\draw (1.6,-1.6) node{$\displaystyle \frac{x_{15} x_{18}}{x_1 x_{3}}y_2$};
\fill (3,0) circle(.1) node[above,yshift=.17cm] {$3$} ;
\draw (3.2,-.6) node{$\displaystyle \frac{x_2x_{14}}{x_4x_{15}}y_3$} ;
\fill (4.5,1) circle(.1) node[above,yshift=.17cm] {$4$} ;
\draw (4.6,.4) node{$\displaystyle \frac{x_{3}x_{5}}{x_{14}x_{19}}y_4$} ;
\fill (6,0) circle(.1) node[above,yshift=.17cm] {$5$} ;
\draw (6.1,-.6) node{$\displaystyle \frac{x_{13}x_{19}}{x_{4}x_{6}}y_5$} ;
\fill (7.5,1) circle(.1) node[above,yshift=.17cm]{$6$} ;
\draw (7.7,.4) node{$\displaystyle \frac{x_{5}x_{12}}{x_7x_{13}}y_6$} ;
\fill (9,2) circle(.1) node[above,yshift=.17cm] {$7$} ;
\draw (9.1,1.4) node{$\displaystyle \frac{x_{6}x_{8}}{x_{12}x_{20}}y_7$} ;
\fill (10.5,1) circle(.1) node[above,yshift=.17cm] {$8$} ;
\draw (10.6,.4) node{$\displaystyle \frac{x_{9}x_{20}}{x_{7}x_{21}}y_8$} ;
\fill (12,0) circle(.1) node[above,yshift=.17cm] {$9$} ;
\draw (12.1,-.6) node{$\displaystyle \frac{x_{11}x_{21}}{x_{8}x_{10}}y_9$};
\end{tikzpicture}}
\end{center}

 For the expansion formula we get:
 
  \begin{align*}
  x_{\gamma}=\displaystyle \frac{x_1x_3x_4^2x_6x_7^2x_8x_{10}x_{16}}{x_1x_2x_3x_4x_5x_6x_7x_8x_9} \rank(P_\gamma;xy)=\displaystyle \frac{x_4x_7x_{10}x_{16}}{x_2x_5x_9} \rank(P_\gamma;xy).
\end{align*}

Here, the  $\rank(P_\gamma;xy)$ consists of $64$ terms corresponding to the $64$ perfect matchings of the snake graph and a calculation by hand is impractical. We will discuss a faster way of doing these calculations using matrices in Section~\ref{sec:matrix}.
 \end{example}

\subsection{Bijection to T-walks}\label{subsec:proof}

 We now give a bijection between $T$-walks of $\gamma$ and ideals of the poset $P_\gamma$ that we will use to prove Theorem~\ref{thm:texpansion}.

\begin{thm}\label{thm:bijection} The map $\Gamma:\mathrm{TW}(\gamma)\rightarrow J(P_\gamma)$ mapping each T-walk $T_{\vec{v}}$ to the ideal of $P_\gamma$ given by $\{x_i\mid {\vec{v}}_i=1\}$ is a well-defined bijection. Furhtermore we have:
\begin{equation}
    x(T_{\vec{v}})=x(T_0)\cdot x(\Gamma(T_{\vec{v}})), \qquad \qquad \mathrm y(T_{\vec{v}})=y(\Gamma(T_{\vec{v}})).
\end{equation}
\end{thm}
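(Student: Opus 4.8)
The plan is to establish the three assertions in order: (1) $\Gamma$ is well-defined, i.e.\ the set $\{a_i \mid \vec v_i = 1\}$ is genuinely an order ideal of $P_\gamma$ whenever $T_{\vec v}$ is a valid $T$-walk; (2) $\Gamma$ is a bijection; and (3) the weight identities hold. For (1), I would argue contrapositively at the level of the defining relations of $P_\gamma$. Recall $P_\gamma$ is the transitive closure of local relations $a_i \succeq a_{i+1}$ or $a_i \preceq a_{i+1}$ coming from whether $e_{i+1}$ follows $e_i$ clockwise or counterclockwise in $\triangle_i$ (plus the loop/notch relations). So it suffices to check: if $a_i \preceq a_{i+1}$ in $P_\gamma$ via a covering relation and $\vec v_{i+1} = 1$, then $\vec v_i = 1$. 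Using Lemma~\ref{lem:minimal direction}, which pins down exactly how the minimal direction $\vec 0$ orients consecutive crossed arcs relative to the shared vertex of $\triangle_i$, I would translate the existence of the connecting arc $\beta_i$ in $\triangle_i$ into a statement about which of $\alpha_i,\alpha_{i+1}$ point toward or away from the shared vertex. The key observation is that $\beta_i$ exists (connects endpoint of $\alpha_i$ to start of $\alpha_{i+1}$) precisely when the chosen orientations are compatible with the triangle; the ``forbidden'' configuration (as in the $(0,1,0)$, $(0,1,1)$, $(1,1,0)$ examples of Example~\ref{ex:Twalk}) is exactly the one violating the ideal condition. Flipping $v_i$ from $0$ to $1$ reverses $\alpha_i$, and one checks case by case (following vs.\ preceding, the two values of $v_{i+1}$) that validity of the $T$-walk forces the downward-closure condition. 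The loop and notch relations are handled by the same local analysis applied in $\triangle_n$.

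For the bijectivity in (2), the cleanest route is to exhibit the inverse explicitly: given an ideal $I \in J(P_\gamma)$, set $\vec v_i = 1$ iff $a_i \in I$, and show the resulting $\vec v$ produces a valid $T$-walk. By the Remark after the $T$-walk definition, each $\vec v$ yields \emph{at most one} $T$-walk, so injectivity of $\Gamma$ is automatic once well-definedness holds; the content is surjectivity, i.e.\ that the ideal condition guarantees every $\beta_i$ can be chosen. This is the same local computation as in (1) run in reverse: at each $\triangle_i$, the ideal condition on the pair $(a_i, a_{i+1})$ rules out exactly the configuration where $\alpha_i$ and $\alpha_{i+1}$ fail to be joinable by a directed arc, and in a triangle any two distinct vertices are joined by a unique side, so $\beta_i$ exists and is unique. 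The boundary arcs $\beta_0, \beta_n$ pose no obstruction since they only depend on whether endpoints coincide, not on an ideal condition. I expect this local case analysis — matching up the four sub-cases of Lemma~\ref{lem:minimal direction} with the four combinations of $(v_i, v_{i+1})$ and keeping the orientation conventions straight — to be the main obstacle; it is not deep but it is the part where sign/orientation bookkeeping errors creep in, and one must be careful about the self-folded triangle caveat (deferred to Section~\ref{sec:selffold}).

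For the weight identities in (3), I would compare term by term. The identity $x(T_{\vec v}) = x(T_0)\cdot x(\Gamma(T_{\vec v}))$ should follow by telescoping: starting from $\vec 0$ (where $T_{\vec 0} = T_0$ by Proposition~\ref{prop:minimalexists}) and flipping the coordinates in $I = \Gamma(T_{\vec v})$ one at a time, each flip of $v_i$ from $0$ to $1$ reverses $\alpha_i$, which swaps which arcs of $\triangle_i$ and $\triangle_{i-1}$ serve as the ``counterclockwise-following'' arcs $\alpha,\beta$ versus the ``clockwise-following'' arcs $\sigma,\epsilon$ in the definition of $x(a_i) = x_\alpha x_\beta / (x_\sigma x_\epsilon)$, and simultaneously alters the $\beta_{i-1}, \beta_i$ edges of the walk. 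One checks that the net effect on the ratio $\prod x_{\beta_j}/\prod x_{\alpha_j}$ is exactly multiplication by $x(a_i)$ — the $\alpha$-arcs in $x(a_i)$'s numerator are the new $\beta$-edges, the old ones go to the denominator, and the $\alpha_i$ contributions cancel since $\prod x_{\alpha_j} = cross(T,\gamma)$ is flip-invariant. Summing over the flips in $I$ gives the product formula; independence of the order of flips matches the fact that $I$ is an ideal (any linear extension works). The coefficient identity $y(T_{\vec v}) = y(\Gamma(T_{\vec v}))$ is immediate from the definitions, since $y(T_{\vec v}) = \prod_{v_i = 1} y_{\alpha_i} = \prod_{v_i=1} y_{e_i} = \prod_{a_i \in I} y(a_i) = y(I)$, with the radius/self-folded modification matching on both sides by construction. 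Finally, combining the bijection with these identities and summing over $J(P_\gamma)$ yields Theorem~\ref{thm:texpansion} from Theorem~\ref{thm:poset extension}, as promised.
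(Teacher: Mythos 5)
Your proposal is correct and takes essentially the same route as the paper: both well-definedness and surjectivity rest on the same local, triangle-by-triangle equivalence (existence of the connecting arc $\beta_i$ in $\triangle_i$ $\Leftrightarrow$ the ideal condition on the pair $(v_i,v_{i+1})$ for the covering relation there), which is precisely how the paper argues via the extended map $\overline{\Gamma}$ on all of $\{0,1\}^n$, with injectivity automatic since each $\vec v$ determines at most one T-walk. Your telescoping check of $x(T_{\vec v})=x(T_0)\cdot x(\Gamma(T_{\vec v}))$ and the immediate verification of the $y$-identity are sound and in fact supply detail that the paper's proof leaves implicit.
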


\begin{proof} Consider the extension map $\overline{\Gamma}:\{0,1\}^n\rightarrow 2^{P_\gamma}$ mapping any ${\vec{v}}$ to the subset $\{x_i\mid {\vec{v}}_i=1\}$ of $P_\gamma$. We will show that the image of an ${\vec{v}}$ under this map is an ideal of $P_\gamma$ if and only if a T-walk $T_{\vec{v}}$ exists.
Note that, by the definition of $P_\gamma$, $i\succeq i+1$ if and only if $e_{i+1}$ follows $e_i$ counterclockwise in $\triangle_i$. In that case, there is an arc in $\triangle_i$ connecting the endpoint of $e_i$ to the starting point of $e_{i+1}$ unless they are both directed counterclockwise. 
As the $0$ direction is given by directing $e_{i+1}$ counterclockwise in $\triangle_i$ and $e_i$ counterclockwise in $\triangle_{i-1}$, this happens if and only ${\vec{v}}_{i+1}=0$ and ${\vec{v}}_i=1$. Similarly, if $i\preceq i+1$, there is no arc in $\triangle_i$ connecting the endpoint of $e_i$ to the starting point of $e_{i+1}$ if and only if ${\vec{v}}_i=0$ and ${\vec{v}}_{i+1}=1$. As a consequence, a T-walk $T_{\vec{v}}$ does not exist if and only if $\overline{\Gamma}({\vec{v}})$ is not an ideal of $P_\gamma$.
\end{proof}

\begin{proof}[Proof of Theorem~\ref{thm:texpansion}] We have shown previously shown that (see Theorem~\ref{thm:poset extension}) we have:
\begin{align*}x_{\gamma}=\displaystyle {x(T_0)} \, \rank(P_\gamma;xy).
\end{align*}

We can use the bijection from Theorem~\ref{thm:bijection} to rewrite this formula in the language of $T$-walks:
\[
x(T_0)\rank(P_\gamma;xy)=\sum_{I\in J(P_\gamma)} x(T_0) x(I) y(I)= \sum_{T_{\vec{v}} \in \mathrm{TW}(\gamma)}  x(T_{\vec{v}}) y(T_{\vec{v}}).
\]
\end{proof}

\section{Expansions via matrices}\label{sec:matrix}

\begin{table}
    \centering
    \begin{tabular}[t]{|c|c|c|}
\hline
         & Formula & Example \\
         \hline &&\\
    Addition     &\emph{(See Theorem~\ref{thm:oriented2})} &
   \\
   $\mathbf{P}\searrow\mathbf{Q}$-
 &\begin{tabular}{c}
 $\rmm_w((\mathbf{P}\searrow\mathbf{Q})\!\searrow)=\rmm_w(\mathbf{P}\!\searrow) \cdot \rmm_w(\mathbf{Q}\!\searrow) $\\
 $\drm_w((\mathbf{P}\searrow\mathbf{Q})\!\nearrow)=\rmm_w(\mathbf{P}\!\searrow) \cdot \drm_w(\mathbf{Q}\!\nearrow)$
 \end{tabular}& 
 \begin{tabular}{c}
 \begin{tikzpicture}[scale=.45]
\draw (0,0)--(1,1)--(3,-1);
\fill[white] (0,0) circle(.2) ;
\fill[red] (0,0) circle(.1) ;
\draw[red] (0,0) circle(.2);
\fill (1,1) circle(.1)  ;
\fill (2,0) circle(.1)  ;
\fill[blue] (3,-1) circle(.15)  ;
\draw[->, blue,dashed, thick] (3,-1)--(3.5,-1.5);
\node at (1.5,-1.5) {$\scriptstyle{\mathbf{P}\!\searrow}$\quad,};
\end{tikzpicture} \quad \begin{tikzpicture}[scale=.45]
\fill[white] (0,1) circle(.2) ;
\draw (0,-.5)--(1,.5);
\fill[white] (0,-.5) circle(.2) ;
\fill[red] (0,-.5) circle(.1) ;
\draw[red] (0,-.5) circle(.2);
\fill[blue] (1,.5) circle(.15)  ;
\draw[->, blue,dashed,thick] (1,.5)--(1.5,0);
\node at (.5,-1.5) {$\scriptstyle{\mathbf{Q}\!\searrow}$};
\end{tikzpicture}$\quad$ \raisebox{.2cm}{$\rightarrow$} $\quad$\begin{tikzpicture}[scale=.45]
\draw (0,0)--(1,1)--(3,-1)--(4,0);
\fill[white] (0,0) circle(.2) ;
\fill[red] (0,0) circle(.1) ;
\draw[red] (0,0) circle(.2);
\fill (1,1) circle(.1)  ;
\fill(1+2/3,1/3) circle(.1);
\fill (1+4/3,-1/3) circle(.1)  ;
\fill(3,-1) circle(.1);
\fill[blue] (4,0) circle(.15)  ;
\draw[->, blue,dashed, thick] (4,0)--(4.5,-.5);
\node at (2,-1.5) {$\scriptstyle{(\mathbf{P}\searrow\mathbf{Q})\!\searrow}$};
\end{tikzpicture} \end{tabular}\\ &&\\
    $\mathbf{P}\nearrow\mathbf{Q}$
        &\begin{tabular}{c}
 $\rmm_w((\mathbf{P}\nearrow\mathbf{Q})\!\searrow)=\drm_w(\mathbf{P}\!\nearrow) \cdot \rmm_w(\mathbf{Q}\!\searrow) $\\
 $\drm_w((\mathbf{P}\nearrow\mathbf{Q})\!\nearrow)=\drm_w(\mathbf{P}\!\nearrow) \cdot \drm_w(\mathbf{Q}\!\nearrow)$
 \end{tabular}&\begin{tabular}{c}
 \begin{tikzpicture}[scale=.45]
\draw (0,0)--(1,1)--(3,-1);
\fill[white] (0,0) circle(.2) ;
\fill[red] (0,0) circle(.1) ;
\draw[red] (0,0) circle(.2);
\fill (1,1) circle(.1)  ;
\fill (2,0) circle(.1)  ;
\fill[blue] (3,-1) circle(.15)  ;
\draw[->, blue,dashed, thick] (3,-1)--(3.5,-.5);
\node at (1.5,-1.5) {$\scriptstyle{\mathbf{P}\!\nearrow}$\quad,};
\end{tikzpicture}\quad \begin{tikzpicture}[scale=.45]
\fill[white] (0,1) circle(.2) ;
\draw (0,-.5)--(1,.5);
\fill[white] (0,-.5) circle(.2) ;
\fill[red] (0,-.5) circle(.1) ;
\draw[red] (0,-.5) circle(.2);
\fill[blue] (1,.5) circle(.15)  ;
\draw[->, blue,dashed,thick] (1,.5)--(1.5,0);
\node at (.5,-1.5) {$\scriptstyle{\mathbf{Q}\!\searrow}$};
\end{tikzpicture}$\quad$ \raisebox{.2cm}{$\rightarrow$} $\quad$\begin{tikzpicture}[scale=.45]
\draw (0,0)--(1,1)--(3,-1)--(5,1);
\fill[white] (0,0) circle(.2) ;
\fill[red] (0,0) circle(.1) ;
\draw[red] (0,0) circle(.2);
\fill (1,1) circle(.1)  ;
\fill(2,0) circle(.1);
\fill(3,-1) circle(.1);
\fill (4,0) circle(.1) ;
\fill[blue] (5,1) circle(.15)  ;
\draw[->, blue,dashed, thick] (5,1)--(5.5,.5);
\node at (3,-1.5) {$\scriptstyle{(\mathbf{P}\nearrow\mathbf{Q})\!\searrow}$};
\end{tikzpicture} \end{tabular}\\
\hline &&\\
        Loop &\emph{(See Theorem~\ref{thm:oriented2})}&\\
        $\close(\mathbf{P}\!\searrow)$ &$\rank( \close(\mathbf{P}\!\searrow);w)=\operatorname{tr}(\rmm_w({\mathbf{P}\!\searrow}))$ &\begin{tabular}{c}
        \begin{tikzpicture}[scale=.45]
\draw (0,0)--(1,1)--(3,-1);
\fill[white] (0,0) circle(.2) ;
\fill[red] (0,0) circle(.1) ;
\draw[red] (0,0) circle(.2);
\fill (1,1) circle(.1)  ;
\fill (2,0) circle(.1)  ;
\fill[blue] (3,-1) circle(.1)  ;
\draw[->, blue,dashed, thick] (3,-1)--(3.5,-1.5);
\node at (1.5,-1.5) {$\scriptstyle{\mathbf{P}\!\searrow}$};
\end{tikzpicture} $\quad$ \raisebox{.2cm}{$\rightarrow$} $\quad$   \begin{tikzpicture}[scale=.45]
\draw (0,-1)--(1,1)--(3,-.2)--(0,-1);
\fill (0,-1) circle(.1) ;
\fill (1,1) circle(.1)  ;
\fill (2,0.4) circle(.1)  ;
\fill (3,-.2) circle(.1);
\node at (1.5,-1.5) {$\scriptstyle{\close\,(\mathbf{P}\!\searrow)}$};
\end{tikzpicture}\end{tabular} \\ && \\
 $\close(\mathbf{P}\nearrow)$ &$\rank( \close(\mathbf{P}\nearrow);w)=\operatorname{tr}(\rmm_w({\mathbf{P}\!\nearrow}))$ &\begin{tabular}{c}
        \begin{tikzpicture}[scale=.45]
\draw (0,0)--(1,1)--(3,-1);
\fill[white] (0,0) circle(.2) ;
\fill[red] (0,0) circle(.1) ;
\draw[red] (0,0) circle(.2);
\fill (1,1) circle(.1)  ;
\fill (2,0) circle(.1)  ;
\fill[blue] (3,-1) circle(.1)  ;
\draw[->, blue,dashed, thick] (3,-1)--(3.5,-.5);
\node at (1.5,-1.5) {$\scriptstyle{\mathbf{P}\!\nearrow}$};
\end{tikzpicture} $\quad$ \raisebox{.2cm}{$\rightarrow$} $\quad$   \begin{tikzpicture}[scale=.45]
\draw (0,0)--(1,1)--(3,-1)--(0,0);
\fill (0,0) circle(.1) ;
\fill (1,1) circle(.1)  ;
\fill (2,0) circle(.1)  ;
\fill (3,-1) circle(.1);
\node at (1.5,-1.5) {$\scriptstyle{\close\,(\mathbf{P}\!\nearrow)}$};
\end{tikzpicture}\end{tabular} \\
        \hline  &&\\
        Source&&\\
        Loop&\emph{(See Theorem~\ref{theorem1})}&\\
             $\rhd(\mathbf{P}\!\searrow)$ &$\rmm_w( (\rhd(\mathbf{P}\!\searrow))\!\searrow) =\lloop_\searrow (\rmm_w({\mathbf{P}\!\searrow}))$ &\begin{tabular}{c}
        \begin{tikzpicture}[scale=.45]
\draw (0,0)--(1,1)--(3,-1);
\fill[white] (0,0) circle(.2) ;
\fill[red] (0,0) circle(.1) ;
\draw[red] (0,0) circle(.2);
\fill (1,1) circle(.1)  ;
\fill (2,0) circle(.1)  ;
\fill[blue] (3,-1) circle(.1)  ;
\draw[->, blue,dashed, thick] (3,-1)--(3.5,-1.5);
\node at (1.5,-1.5) {$\scriptstyle{\mathbf{P}\!\searrow}$};
\end{tikzpicture} $\quad$ \raisebox{.2cm}{$\rightarrow$} $\quad$   \begin{tikzpicture}[scale=.45]
\draw (0,-1)--(1,1)--(3,-.2)--(0,-1);
\fill (0,-1) circle(.1) ;
\fill (1,1) circle(.1)  ;
\fill (2,0.4) circle(.1)  ;
\fill[white] (3,-.2) circle(.2) ;
\fill[red] (3,-.2) circle(.1) ;
\draw[red] (3,-.2) circle(.2);
\draw[->, blue,dashed, thick] (3,-.2)--(3.8,-1);
\node at (1.5,-1.5) {$\scriptstyle{\rhd(\mathbf{P}\!\searrow)\!\searrow}$};
\end{tikzpicture}\end{tabular} \\ && \\
  $\rhd(\mathbf{P}\!\nearrow)$ &$\rmm_w(( \rhd(\mathbf{P}\!\nearrow))\!\searrow)=\lloop_\nearrow(
             \rmm_w({\mathbf{P}\!\nearrow}))$
  &\begin{tabular}{c}
        \begin{tikzpicture}[scale=.45]
\draw (0,0)--(1,1)--(3,-1);
\fill[white] (0,0) circle(.2) ;
\fill[red] (0,0) circle(.1) ;
\draw[red] (0,0) circle(.2);
\fill (1,1) circle(.1)  ;
\fill (2,0) circle(.1)  ;
\fill[blue] (3,-1) circle(.1)  ;
\draw[->, blue,dashed, thick] (3,-1)--(3.5,-.5);
\node at (1.5,-1.5) {$\scriptstyle{\mathbf{P}\!\nearrow}$};
\end{tikzpicture} $\quad$ \raisebox{.2cm}{$\rightarrow$} $\quad$   \begin{tikzpicture}[scale=.45]
\draw (0,0)--(1,1)--(3,-1)--(0,0);
\fill (0,0) circle(.1) ;
\fill (1,1) circle(.1)  ;
\fill (2,0) circle(.1)  ;
\fill[white] (3,-1) circle(.2) ;
\fill[red] (3,-1) circle(.1) ;
\draw[red] (3,-1) circle(.2);
\draw[->, blue,dashed, thick] (3,-1)--(3.9,-1.5);
\node at (1.5,-1.5) {$\scriptstyle{\rhd(\mathbf{P}\!\nearrow)\!\searrow}$};
\end{tikzpicture}\end{tabular} \\
        \hline  &&\\
        Target&&\\
        Loop&\emph{(See Theorem~\ref{theorem2})}&\\
             $\lhd(\mathbf{P}\!\searrow)$ &$\rmm_w( (\lhd(\mathbf{P}\!\searrow))\!\searrow)=\rloop(\rmm_w({\mathbf{P}\!\searrow}))$ &\begin{tabular}{c}
        \begin{tikzpicture}[scale=.45]
\draw (0,0)--(1,1)--(3,-1);
\fill[white] (0,0) circle(.2) ;
\fill[red] (0,0) circle(.1) ;
\draw[red] (0,0) circle(.2);
\fill (1,1) circle(.1)  ;
\fill (2,0) circle(.1)  ;
\fill[blue] (3,-1) circle(.1)  ;
\draw[->, blue,dashed, thick] (3,-1)--(3.5,-1.5);
\node at (1.5,-1.5) {$\scriptstyle{\mathbf{P}\!\searrow}$};
\end{tikzpicture} $\quad$ \raisebox{.2cm}{$\rightarrow$} $\quad$   \begin{tikzpicture}[scale=.45]
\draw (0,-1)--(1,1)--(3,-.2)--(0,-1);
\fill (3,-.2) circle(.1) ;
\fill (1,1) circle(.1)  ;
\fill (2,0.4) circle(.1)  ;
\fill[white] (0,-1) circle(.2) ;
\fill[red] (0,-1) circle(.1) ;
\draw[red] (0,-1) circle(.2);
\draw[->, blue,dashed, thick] (0,-1)--(-.5,-1.5);
\node at (1.5,-1.5) {$\scriptstyle{\lhd(\mathbf{P}\!\searrow)\!\searrow}$};
\end{tikzpicture}\end{tabular} \\ && \\
  $\lhd(\mathbf{P}\!\nearrow)$ &$\rmm_w( (\lhd(\mathbf{P}\!\nearrow))\!\searrow)=\rloop (\rmm_w({\mathbf{P}\!\nearrow}))$
  &\begin{tabular}{c}
        \begin{tikzpicture}[scale=.45]
\draw (0,0)--(1,1)--(3,-1);
\fill[white] (0,0) circle(.2) ;
\fill[red] (0,0) circle(.1) ;
\draw[red] (0,0) circle(.2);
\fill (1,1) circle(.1)  ;
\fill (2,0) circle(.1)  ;
\fill[blue] (3,-1) circle(.1)  ;
\draw[->, blue,dashed, thick] (3,-1)--(3.5,-.5);
\node at (1.5,-1.5) {$\scriptstyle{\mathbf{P}\!\nearrow}$};
\end{tikzpicture} $\quad$ \raisebox{.2cm}{$\rightarrow$} $\quad$   \begin{tikzpicture}[scale=.45]
\draw (0,0)--(1,1)--(3,-1)--(0,0);
\fill (0,0) circle(.1) ;
\fill (1,1) circle(.1)  ;
\fill (2,0) circle(.1)  ;
\fill (3,-1) circle(.1)  ;
\fill[white] (0,0) circle(.2) ;
\fill[red] (0,0) circle(.1) ;
\draw[red] (0,0) circle(.2);
\draw[->, blue,dashed, thick] (0,0)--(-.8,-.8);
\node at (1.5,-1.5) {$\scriptstyle{\lhd(\mathbf{P}\!\nearrow)\!\searrow}$};
\end{tikzpicture}\end{tabular} \\
\hline
    \end{tabular}
    \caption{Big Table of Oriented Poset Operations}
    \label{table:tableofmoves}
\end{table}

In this section, we will describe a method of calculating weight polynomials of posets with matrix multiplication.  
 An \definition{oriented poset} $\mathbf{P}=(P,x_L,x_R)$ consists of a poset $P$ with two specialized vertices $x_L$ and $x_R$ which can be thought as the left and right vertex, or alternatively the \definition{target vertex} $\target\,$  and the \definition{source vertex} $\rightarrow$. Oriented matrices actually behave like building blocks, in the sense that we connect the target of one with the arrow of another to build a larger structure. Also as building blocks, one can start with the smallest of pieces (a one element poset) to build a myriad of structures. One key difference is, there are two ways of an arrow vertex connecting with a target vertex. We can either connect via $x_R \succeq y_L$ ($x_R\nearrow y_L$) to get $\mathbf{P}\nearrow\mathbf{Q}$ or connect via $x_R \preceq y_L$ ($x_R\searrow y_L$) to get $\mathbf{P}\searrow\mathbf{Q}$.

On the weight polynomial level, we will try to streamline these operations into $2\times2$ matrices that keep track of the end points as well as the weight polynomials.  A \definition{weight matrix} of an oriented poset $\mathbf{P}$ can be thought of as an oriented matrix with an upwards or downwards pointing tail coming out of the arrow vertex. The two types of connections necessitate us to define two kinds of weight matrices.
\begin{itemize}
    \item \emph{Down pointing weight matrix of $\mathbf{P}$}: $$\displaystyle \rmm_w(\mathbf{P}\!\searrow):=\begin{bmatrix} \rank(\mathbf{P};w) & -\rank(\mathbf{P};q)|_{x_R\in I}\\ \rank(\mathbf{P};w)|_{x_L \notin I} & -\rank(\mathbf{P};w)|_{\substack{x_R\in I\\x_L \notin I}}
		\end{bmatrix}$$
    \item  \emph{Up pointing weight matrix of $\mathbf{P}$}: 
    $$	\displaystyle \drm_w(\mathbf{P}\!\nearrow):=\begin{bmatrix} \rank(\mathbf{P};w)|_{x_R\in I} & \rank(\mathbf{P};w)|_{x_R\notin I}\\ \rank(\mathbf{P};w)|_{\substack{x_R\in I\\x_L \notin I}} & \rank(\mathbf{P};w)|_{\substack{x_R\notin I\\x_L \notin I}}
			\end{bmatrix}$$
\end{itemize}
The entries are the weight polynomials of the ideals satisfying the given constraints.
We can easily go from one to the other via the \definition{tail flip} matrix $\tf=\begin{bmatrix}
				1&-1\\1&0
			\end{bmatrix}$:
		$$\drm_w(\mathbf{P}\!\nearrow)=\rmm_w(\mathbf{P}\!\searrow) \cdot \tf, \qquad \qquad \rmm_w(\mathbf{P}\!\searrow)=\drm_w(\mathbf{P}\!{\nearrow}) \cdot \tf^{-1}.$$

If we know a weight matrix of an oriented poset, we can easily read the rank polynomial off from it. For $\rmm_w(\mathbf{P}\!\searrow)$ it is given by the top left entry of the matrix and for $\drm_w(\mathbf{P}\!\nearrow)$ it is given by the sum of the entries in the top row. As we are primarily concerned with the weight polynomials for the expansion formulae calculation, it is sufficient to calculate either matrix. 
		
	\begin{figure}[ht]
	    \centering
\begin{tikzpicture}[scale=.7]
\draw (0,0)--(1.5,1)--(4.5,-1);
\fill (0,0) circle(.2) node[white] {$1$} ;
\fill (1.5,1) circle(.2) node[white] {$2$} ;
\fill (3,0) circle(.2) node[white] {$3$} ;
\fill (4.5,-1) circle(.2) node[white] {$4$} ;
\node at (2.25,-2.5) {Fence poset $F(1,2)$};
\end{tikzpicture}\qquad \qquad \begin{tikzpicture}[scale=.7]
\draw (0,0)--(1.5,1)--(4.5,-1);
\fill[white] (0,0) circle(.2) ;
\fill[red] (0,0) circle(.1) ;
\draw[red] (0,0) circle(.2);
\fill (1.5,1) circle(.1)  ;
\fill (3,0) circle(.1)  ;
\fill[blue] (4.5,-1) circle(.15)  ;
\draw[->, blue,dashed] (4.5,-1)--(5.5,-5/3);
\node at (2.25,-2.5) {$\mathbf{(1,2)}\!\searrow$};
\end{tikzpicture}
\qquad  \begin{tikzpicture}[scale=.7]
\draw (0,0)--(1.5,1)--(4.5,-1);
\fill[white] (0,0) circle(.2) ;
\fill[red] (0,0) circle(.1) ;
\draw[red] (0,0) circle(.2);
\fill (1.5,1) circle(.1)  ;
\fill (3,0) circle(.1)  ;
\fill[blue] (4.5,-1) circle(.15)  ;
\draw[->, blue,dashed] (4.5,-1)--(5.5,-1/3);
\node at (2.25,-2.5) {$\mathbf{(1,2)}\!\nearrow$};
\end{tikzpicture}
\caption{One can think of the fence poset for $(1,2)$ as an oriented poset by specifying left and right end points.}\label{fig:21example}
\end{figure}
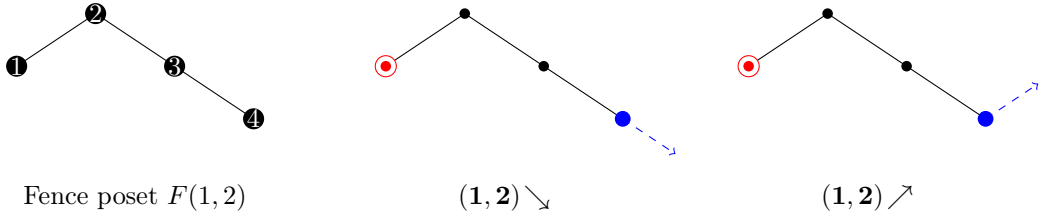

\begin{example} The fence poset of $(1,2)$ can be oriented by taking vertex $1$ as the target vertex, and vertex $4$ as the source vertex. There are two ways to add a tail to the source vertex (see Figure~\ref{fig:21example}) and the corresponding weight matrices are given below:

\begin{align*}
    \rmm_w(\mathbf{(1,2)}\!\searrow)&=\begin{bmatrix}
    \scriptstyle{1+w_1+w_4+w_1w_4+w_3w_4+w_1w_3w_4+w_1w_2w_3w_4} &  \scriptstyle{-w_4-w_1w_4-w_3w_4-w_1w_3w_4-w_1w_2w_3w_4}\\
     \scriptstyle{1+w_4+w_3w_4}&  \scriptstyle{-w_4-w_3w_4}
    \end{bmatrix},\\
        \drm_w(\mathbf{(1,2)}\!\nearrow)&=\begin{bmatrix}
    \scriptstyle{\qquad  w_4+w_1w_4+w_3w_4+w_1w_3w_4+w_1w_2w_3w_4} \qquad &  \hspace{20mm}\scriptstyle{1+w_1}\hspace{20 mm}\\
     \scriptstyle{w_4+w_3w_4}&  \scriptstyle{1}
    \end{bmatrix}.
\end{align*}

The weight polynomial of the fence poset $F(1,2)$ is equal to $1+w_1+w_4+w_1w_4+w_3w_4+w_1w_3w_4+w_1w_2w_3w_4$. It can be calculated by looking at top left entry of $\rmm_w(\mathbf{(1,2)}\!\searrow)$ or the sum of the entries in the top row of 
$\drm_w(\mathbf{(1,2)}\!\nearrow)$.
\end{example}

\begin{thm}\cite{O22}\label{thm:oriented1} The weight matrices for $\mathbf{P}\searrow\mathbf{Q}$ and $\mathbf{P}\nearrow\mathbf{Q}$ can be calculated from the weight matrices for $\mathbf{P}$ and $\mathbf{Q}$ by matrix multiplication as follows:
\begin{eqnarray*}\rmm_w((\mathbf{P}\searrow\mathbf{Q})\!\searrow)=\rmm_w(\mathbf{P}\!\searrow) \cdot \rmm_w(\mathbf{Q}\!\searrow), \qquad &\drm_w((\mathbf{P}\searrow\mathbf{Q})\!\nearrow)=\rmm_w(\mathbf{P}\!\searrow) \cdot \drm_w(\mathbf{Q}\!\nearrow),\\
\rmm_w((\mathbf{P}\nearrow\mathbf{Q})\!\searrow)=\drm_w(\mathbf{P}\!\nearrow) \cdot \rmm_w(\mathbf{Q}\!\searrow), \qquad &\drm_w((\mathbf{P}\nearrow\mathbf{Q})\!\nearrow)=\drm_w(\mathbf{P}\!\nearrow) \cdot \drm_w(\mathbf{Q}\!\nearrow).
\end{eqnarray*}
\end{thm}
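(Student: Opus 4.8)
To prove Theorem~\ref{thm:oriented1}, my plan is to argue directly at the level of order ideals: I will show that the generating function of the ideals of the glued poset, refined by whether the ideal contains the two outer distinguished vertices, factors as a product of such refined generating functions of $\mathbf{P}$ and of $\mathbf{Q}$ corrected by a single inclusion--exclusion term, and that this factorization is literally the product of the corresponding $2\times 2$ weight matrices.

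\emph{Step 1: combinatorics of the gluing.} The poset underlying $\mathbf{P}\searrow\mathbf{Q}$ is the disjoint union $P\sqcup Q$ together with one extra cover relation between the source $x_R$ of $\mathbf{P}$ and the target $x_L$ of $\mathbf{Q}$, whose direction is fixed by the $\searrow$ versus $\nearrow$ gluing. Although the transitive closure also introduces relations between the principal filter of $x_R^{\mathbf{P}}$ in $P$ and the principal ideal of $x_L^{\mathbf{Q}}$ in $Q$, down-closure collapses all of these: for an ideal $I$ of the glued poset, setting $I_P=I\cap P$ and $I_Q=I\cap Q$ identifies the ideals of $\mathbf{P}\searrow\mathbf{Q}$ with the pairs $(I_P,I_Q)$ of ideals of $P$ and $Q$ that avoid exactly one combination --- namely $x_R^{\mathbf{P}}\in I_P$ together with $x_L^{\mathbf{Q}}\notin I_Q$ for $\mathbf{P}\searrow\mathbf{Q}$, and the complementary combination $x_L^{\mathbf{Q}}\in I_Q$ together with $x_R^{\mathbf{P}}\notin I_P$ for $\mathbf{P}\nearrow\mathbf{Q}$. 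This equivalence is the only genuine lemma and is the step I would write out in full.

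\emph{Step 2: factorization and matching of entries.} Because the weight is multiplicative along $I=I_P\sqcup I_Q$, each entry of $\rmm_w((\mathbf{P}\searrow\mathbf{Q})\!\searrow)$ --- the weight sum over the ideals of $\mathbf{P}\searrow\mathbf{Q}$ with prescribed membership of the outer endpoints $x_L^{\mathbf{P}}$ and $x_R^{\mathbf{Q}}$ --- equals a product of a suitably restricted $\mathbf{P}$-sum with a restricted $\mathbf{Q}$-sum, minus the product of the two sums over the forbidden halves. Running through the four cases, these ``product minus product'' expressions are exactly the four entries of $\rmm_w(\mathbf{P}\!\searrow)\cdot\rmm_w(\mathbf{Q}\!\searrow)$: the minus sign sitting in the second column of the down-pointing weight matrices --- the column that records sums over ideals containing the source vertex --- is precisely the inclusion--exclusion correction. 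The identical computation for $\mathbf{P}\nearrow\mathbf{Q}$, where the forbidden pair is the complementary one, reproduces the entries of $\drm_w(\mathbf{P}\!\nearrow)\cdot\rmm_w(\mathbf{Q}\!\searrow)$; the switch from $\rmm_w(\mathbf{P}\!\searrow)$ to $\drm_w(\mathbf{P}\!\nearrow)$ on the left is forced by the direction in which $\mathbf{P}$'s source attaches.

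\emph{Step 3: the up-pointing tails.} The remaining two identities follow formally. The tail lives at the source (right) end of the composite, which is $\mathbf{Q}$'s side, so converting between the up- and down-pointing weight matrices is right-multiplication by the fixed tail-flip matrix $\tf$; hence
\[
\drm_w((\mathbf{P}\searrow\mathbf{Q})\!\nearrow)=\rmm_w((\mathbf{P}\searrow\mathbf{Q})\!\searrow)\cdot\tf=\rmm_w(\mathbf{P}\!\searrow)\cdot\bigl(\rmm_w(\mathbf{Q}\!\searrow)\cdot\tf\bigr)=\rmm_w(\mathbf{P}\!\searrow)\cdot\drm_w(\mathbf{Q}\!\nearrow)
\]
by associativity, and likewise for the $\nearrow$-composition. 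I expect the main difficulty to be bookkeeping rather than anything conceptual: keeping the signs and the row/column meanings straight while verifying all four entries in both the $\searrow$ and $\nearrow$ cases, and checking that the degenerate situations (the two distinguished vertices of a factor coinciding, or a one-element factor) do not spoil the matching --- which they should not, since every matrix entry is a polynomial identity in the weights, but it deserves an explicit remark.
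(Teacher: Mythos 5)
Your argument is correct, and it is exactly the kind of proof this statement has: note that the paper itself does not prove Theorem~\ref{thm:oriented1} (it is quoted from \cite{O22}, and the remark following Theorem~\ref{thm:oriented2} explicitly defers those proofs), but the paper's own proofs of the new results, Theorems~\ref{theorem1} and~\ref{theorem2}, proceed entry by entry over ideals with prescribed membership of the distinguished vertices, which is precisely your Steps 1--2. Your key lemma is right: down-closure collapses the transitive closure of the gluing onto the single condition at $x_R^{\mathbf{P}}$ and $x_L^{\mathbf{Q}}$, so ideals of the glued poset are pairs $(I_P,I_Q)$ avoiding one forbidden membership pattern, and multiplicativity of the weight then turns each entry of $\rmm_w((\mathbf{P}\searrow\mathbf{Q})\!\searrow)$ into the ``product minus product'' expressions, which agree coordinatewise with the entries of $\rmm_w(\mathbf{P}\!\searrow)\cdot\rmm_w(\mathbf{Q}\!\searrow)$ (I checked all four), and similarly for the $\nearrow$-gluing, where the decomposition is over the two allowed patterns. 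Two caveats. First, your forbidden pair presumes that $\mathbf{P}\searrow\mathbf{Q}$ imposes $x_R^{\mathbf{P}}\succeq x_L^{\mathbf{Q}}$; the prose in Section~\ref{sec:matrix} assigns $\succeq$/$\preceq$ to $\nearrow$/$\searrow$ the other way around, but that sentence is evidently a typo, since your convention is the one under which the stated matrix identities and Example~\ref{ex:twalksviamatrices} are true --- just state the convention you use. Second, in Step 3 you quote the printed tail-flip identity $\drm_w(\mathbf{P}\!\nearrow)=\rmm_w(\mathbf{P}\!\searrow)\cdot\tf$; with the matrices as defined in this paper the conversion is actually by $\tf^{-1}=\begin{bmatrix}0&1\\-1&1\end{bmatrix}$ (test it on a single vertex), i.e.\ the roles of $\tf$ and $\tf^{-1}$ are interchanged in the displayed formula. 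This does not affect your Step 3, which only needs that the $\searrow$-to-$\nearrow$ conversion is right multiplication by a fixed invertible matrix attached to the source end, but the specific matrix should be corrected when you write it out.
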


This method allows us to construct all fence posets by combining posets with a single vertex by up and down steps. We will use the following notation for the weight matrices of the single vertex poset:
\begin{align}
\rmm_w(\bullet\!\searrow):=\mdo(w)=\mdom{w},\quad \qquad \drm_w(\bullet\!\nearrow):=\mup(w):=\mupm{w}.
\end{align}~\label{down-up}

These two matrices are sufficient for building all fence posets and calculating their weight polynomials. For circular fence posets, we will need an additional operation. Given an oriented poset, one may want to connect its source vertex to its target vertex, creating a loop. We denote the poset obtained thus from $\mathbf{P}$ by connecting the source to the target via $x_L\preceq x_R$ (respectively $x_L\succeq x_R$) by $\close(\mathbf{P}\!\searrow)$ (respectively $\close(\mathbf{P}\!\nearrow)$. One can simply do this by taking the trace of the corresponding matrix. 

\begin{thm}\cite{O22}\label{thm:oriented2} We can calculate the weight polynomials for $\close(\mathbf{P}\!\searrow)$ and $\close(\mathbf{P}\!\nearrow)$ by taking the traces of the corresponding weight matrices as follows:
\begin{eqnarray*}
     \rank( \close(\mathbf{P}\!\searrow);w)=\operatorname{tr}(\rmm_w({\mathbf{P}\!\searrow})) \qquad \rank( \close(\mathbf{P}\nearrow);w)=\operatorname{tr}(\rmm_w({\mathbf{P}\!\nearrow}))
\end{eqnarray*}
\end{thm}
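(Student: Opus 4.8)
The plan is to unwind the definitions of the weight matrices and of the closure operation $\close(\mathbf{P}\!\searrow)$ and show directly that the ideals of the loop poset are counted, with weights, by the trace. Write $P$ for the underlying poset of the oriented poset $\mathbf{P}$, with specified target vertex $x_L$ and source vertex $x_R$. The loop poset $\close(\mathbf{P}\!\searrow)$ is obtained by adding the cover-type relation $x_L \preceq x_R$ (and taking transitive closure) to $P$; since $P$ already carries whatever relations make $x_L$ a target-side vertex and $x_R$ a source-side vertex, adding $x_L\preceq x_R$ has the effect of identifying the ``top'' of the chain through $x_R$ with something below $x_L$. The key combinatorial observation is that an order ideal $I$ of $\close(\mathbf{P}\!\searrow)$ is exactly an order ideal $I$ of $P$ that additionally respects the new relation: $x_R\in I \Rightarrow x_L\in I$, equivalently, it is \emph{not} the case that $x_R\in I$ and $x_L\notin I$. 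So $J(\close(\mathbf{P}\!\searrow))$ is in bijection with $\{I\in J(P): \neg(x_R\in I \wedge x_L\notin I)\}$, and hence
\begin{equation*}
\rank(\close(\mathbf{P}\!\searrow);w) \;=\; \rank(P;w) \;-\; \rank(P;w)\big|_{\substack{x_R\in I\\ x_L\notin I}}.
\end{equation*}

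Next I would compare this to the trace of $\rmm_w(\mathbf{P}\!\searrow)$. By definition the $(1,1)$-entry is $\rank(\mathbf{P};w)$ and the $(2,2)$-entry is $-\rank(\mathbf{P};w)\big|_{x_R\in I,\, x_L\notin I}$, so
\begin{equation*}
\operatorname{tr}\big(\rmm_w(\mathbf{P}\!\searrow)\big) \;=\; \rank(\mathbf{P};w) \;-\; \rank(\mathbf{P};w)\big|_{\substack{x_R\in I\\ x_L\notin I}},
\end{equation*}
which is precisely the expression obtained above. The $\nearrow$ case is handled the same way: here closing via $x_L\succeq x_R$ forbids the configuration $x_R\notin I,\ x_L\in I$; wait — one must be careful about which configuration is excluded. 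For $\close(\mathbf{P}\!\nearrow)$ the added relation is $x_L\succeq x_R$, so an ideal must satisfy $x_L\in I \Rightarrow x_R\in I$, i.e. forbid $x_L\in I,\ x_R\notin I$, so $\rank(\close(\mathbf{P}\!\nearrow);w) = \rank(P;w) - \rank(P;w)\big|_{x_R\notin I,\, x_L\in I}$. Comparing with $\rmm_w(\mathbf{P}\!\nearrow) = \drm_w(\mathbf{P}\!\nearrow)\cdot\tf^{-1}$, I would compute the diagonal of this product from the given entries of $\drm_w(\mathbf{P}\!\nearrow)$ and $\tf^{-1}=\begin{bmatrix}0&1\\-1&1\end{bmatrix}$, and check the two diagonal entries sum to the right thing; since $\rank(\mathbf{P};w) = \rank(\mathbf{P};w)|_{x_R\in I} + \rank(\mathbf{P};w)|_{x_R\notin I}$ splits the first column of $\drm_w$, the bookkeeping matches.

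The main obstacle — really the only subtle point — is verifying carefully that ``ideal of the glued poset'' equals ``ideal of $P$ satisfying the one extra implication,'' including the effect of taking transitive closure when $x_L$ and $x_R$ are not minimal/maximal. Concretely, one must check that the new relation $x_L\preceq x_R$ does not create a cycle that would force the identification of distinct elements or collapse the poset: this holds because $x_L$ is, by the oriented-poset construction, reachable downward only through the ``left spine'' and $x_R$ upward only through the ``right spine'', so the transitive closure adds only relations of the form $z\preceq x_R \preceq \cdots$ and $\cdots \preceq x_L \preceq y$ already implied once one knows $x_L\notin I \Rightarrow x_R\notin I$ is the single new ideal constraint. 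Once that is pinned down, everything else is a one-line matrix-entry comparison, and the statement follows. (One may alternatively invoke that this is exactly \cite{O22}, but the above gives a self-contained derivation from the definitions recalled in this section.)
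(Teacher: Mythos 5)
Your treatment of the first identity is correct. Since $\close(\mathbf{P}\!\searrow)$ is obtained by adding the single relation $x_L\preceq x_R$, an ideal of the closed poset is exactly an ideal $I$ of $P$ avoiding the configuration $x_R\in I$, $x_L\notin I$ (your transitive-closure check is the standard one and does go through), so $\rank(\close(\mathbf{P}\!\searrow);w)=\rank(\mathbf{P};w)-\rank(\mathbf{P};w)|_{x_R\in I,\,x_L\notin I}=\operatorname{tr}(\rmm_w(\mathbf{P}\!\searrow))$. Note the paper does not prove this theorem at all --- it defers to \cite{O22} and only proves the new source/target-loop operations --- but your entry-by-entry ideal count is exactly the style of those proofs, so this half is both correct and in the spirit of the paper.

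The $\nearrow$ case is where you have a genuine gap. In the paper's notation $\rmm$ and $\drm$ are the same symbol $\mathcal{M}$, so $\operatorname{tr}(\rmm_w(\mathbf{P}\!\nearrow))$ simply means the trace of the up-pointing matrix already defined; there is no separate matrix ``$\drm_w(\mathbf{P}\!\nearrow)\cdot\tf^{-1}$'' to introduce, and introducing it gives the wrong object. A tail flip turns $\drm_w(\mathbf{P}\!\nearrow)$ into $\rmm_w(\mathbf{P}\!\searrow)$, whose trace is $\rank(\mathbf{P};w)-\rank(\mathbf{P};w)|_{x_R\in I,\,x_L\notin I}$, i.e.\ the weight polynomial of $\close(\mathbf{P}\!\searrow)$, not of $\close(\mathbf{P}\!\nearrow)$; the two differ already for a two-element antichain with weights $u$ on $x_L$ and $v$ on $x_R$ ($1+u+uv$ versus $1+v+uv$). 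With the flip you actually wrote (multiplying $\drm_w(\mathbf{P}\!\nearrow)$ by $\tf^{-1}$) the product is not any of the defined matrices, and its trace equals $\rank(\mathbf{P};w)|_{x_R\in I,\,x_L\notin I}-\rank(\mathbf{P};w)|_{x_R\notin I,\,x_L\in I}$, which is not a closure polynomial at all; so the claimed ``bookkeeping matches'' would fail if carried out. The correct argument needs no flip and is the exact analogue of your first case: $\operatorname{tr}(\drm_w(\mathbf{P}\!\nearrow))=\rank(\mathbf{P};w)|_{x_R\in I}+\rank(\mathbf{P};w)|_{x_R\notin I,\,x_L\notin I}$, and splitting $\rank(\mathbf{P};w)|_{x_R\notin I}$ according to whether $x_L\in I$ shows this equals $\rank(\mathbf{P};w)-\rank(\mathbf{P};w)|_{x_L\in I,\,x_R\notin I}=\rank(\close(\mathbf{P}\!\nearrow);w)$. (Also be wary of the displayed tail-flip identities: as printed, $\tf$ and $\tf^{-1}$ are interchanged, as a check on $\mdo(w)$ and $\mup(w)$ shows, so they are not a safe crutch here.)
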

This is a poset that is not oriented. If no more connections are necessary, as in the case of circular fence posets corresponding to the band graphs, it is enough to use the trace operation to calculate the corresponding weight polynomial:

\begin{align*}
    \rank( \close(\mathbf{P}\!\searrow);w)&=\operatorname{tr}(\rmm_w({\mathbf{P}\!\searrow})),\\
    \rank( \close(\mathbf{P}\!\nearrow);w)&=\operatorname{tr}(\rmm_w({\mathbf{P}\!\nearrow})).
\end{align*}

\begin{remark} The proofs of the above identities are given in \cite{O22} where oriented posets are originally defined. Although that work is concerned with rank polynomials and not weight polynomials, the same ideas apply. We will only include proofs for the new constructions given below.
\end{remark}

When we want to continue making connections to a loop  from the source vertex, we use the source loop operation. The \definition{source loop} of $\mathbf{P}\!\searrow$ is the oriented poset $(\close(\mathbf{P}\!\searrow),x_R,x_R)$. We denote it by $\rhd(\mathbf{P}\!\searrow)$. We can similarly define $\rhd(\mathbf{P}\!\nearrow)$. 

Calculating the weight matrix of a source loop requires a separate function as we are altering our source and target matrices.

\begin{align*}
    \lloop_\searrow\left( \begin{bmatrix}
    a&b\\c&d
    \end{bmatrix}\right):&=& \begin{bmatrix}
    a+d& b-d\\a+b&0
    \end{bmatrix} \qquad \qquad     \lloop_\nearrow\left( \begin{bmatrix}
    a&b\\c&d
    \end{bmatrix}\right):&=&  \begin{bmatrix}
    a+d& -a\\d&0
    \end{bmatrix}
\end{align*}
\vspace{.1cm}

\begin{thm}~\label{theorem1} The weight matrices correponding to $\rhd(\mathbf{P}\!\searrow)$ and $\rhd(\mathbf{P}\!\nearrow)$ can be calculated via the $\lloop$ functions as follows: 
\begin{align*}
    \lloop_\searrow(\rmm_w(\mathbf{P}\!\searrow))&= \rmm_w( (\rhd(\mathbf{P}\!\searrow))\!\searrow), \qquad 
    \lloop_\nearrow(\drm_w(\mathbf{P}\!\nearrow))= \rmm_w( (\rhd(\mathbf{P}\!\nearrow))\!\nearrow).
\end{align*}
\end{thm}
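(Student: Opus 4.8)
The plan is to prove each of the two matrix identities by unwinding the definitions of the weight matrices into (conditioned) weight polynomials and then checking equality entry by entry; the only genuinely combinatorial ingredient is a description of the ideal lattice of a closed-up poset. First I would record this description: if $\close(\mathbf{P}\!\searrow)$ is $P$ together with the single added relation $x_L\preceq x_R$, then a subset $I\subseteq P$ is an order ideal of $\close(\mathbf{P}\!\searrow)$ precisely when it is an order ideal of $P$ avoiding the forbidden pattern ``$x_R\in I$ and $x_L\notin I$'' (this is just down-closure with respect to the one new relation); symmetrically, the ideals of $\close(\mathbf{P}\!\nearrow)$ are the ideals of $P$ avoiding ``$x_L\in I$ and $x_R\notin I$''. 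In the settings where $\rhd(\cdot)$ is applied the added relation creates no directed cycle, so this description is literally valid. The top-left entries of the two claimed identities are then exactly the trace statement of Theorem~\ref{thm:oriented2} (applied to $\close(\mathbf{P}\!\searrow)$, respectively $\close(\mathbf{P}\!\nearrow)$); it remains to handle the off-diagonal and bottom entries, which are governed by the extra conditions $x_R\in I$ or $x_R\notin I$.

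For the first identity I would write $\rmm_w(\mathbf{P}\!\searrow)=\begin{bmatrix}a&b\\c&d\end{bmatrix}$, so that $a=\rank(P;w)$, $b=-\rank(P;w)|_{x_R\in I}$, $c=\rank(P;w)|_{x_L\notin I}$, and $d=-\rank(P;w)|_{x_R\in I,\,x_L\notin I}$. Since $\rhd(\mathbf{P}\!\searrow)=(\close(\mathbf{P}\!\searrow),x_R,x_R)$ has its target vertex equal to its source vertex, the bottom-right entry of $\rmm_w((\rhd(\mathbf{P}\!\searrow))\!\searrow)$ is $-\rank(\close(\mathbf{P}\!\searrow);w)|_{x_R\in I,\,x_R\notin I}=0$, matching the shape of $\lloop_\searrow$. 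Using the ideal description: the top-left entry is $\rank(\close(\mathbf{P}\!\searrow);w)=\rank(P;w)-\rank(P;w)|_{x_R\in I,\,x_L\notin I}=a+d$; conditioning on $x_R\in I$ forces $x_L\in I$, so the top-right entry is $-\rank(\close(\mathbf{P}\!\searrow);w)|_{x_R\in I}=-\bigl(\rank(P;w)|_{x_R\in I}-\rank(P;w)|_{x_R\in I,\,x_L\notin I}\bigr)=-\bigl((-b)-(-d)\bigr)=b-d$; and conditioning on $x_R\notin I$ makes the forbidden pattern vacuous, so the bottom-left entry is $\rank(P;w)|_{x_R\notin I}=\rank(P;w)-\rank(P;w)|_{x_R\in I}=a+b$. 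These four entries are exactly $\lloop_\searrow(\begin{bmatrix}a&b\\c&d\end{bmatrix})$.

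The second identity is the mirror image and I would treat it the same way: writing $\drm_w(\mathbf{P}\!\nearrow)=\begin{bmatrix}a&b\\c&d\end{bmatrix}$ one has $a=\rank(P;w)|_{x_R\in I}$, $b=\rank(P;w)|_{x_R\notin I}$ (so $a+b=\rank(P;w)$), $c=\rank(P;w)|_{x_R\in I,\,x_L\notin I}$, $d=\rank(P;w)|_{x_R\notin I,\,x_L\notin I}$; now $\close(\mathbf{P}\!\nearrow)$ adds $x_L\succeq x_R$ with forbidden pattern ``$x_L\in I$ and $x_R\notin I$'', the same inclusion–exclusion gives $\rank(\close(\mathbf{P}\!\nearrow);w)=\rank(P;w)-(b-d)=a+d$, $\rank(\close(\mathbf{P}\!\nearrow);w)|_{x_R\in I}=\rank(P;w)|_{x_R\in I}=a$, and $\rank(\close(\mathbf{P}\!\nearrow);w)|_{x_R\notin I}=\rank(P;w)|_{x_R\notin I,\,x_L\notin I}=d$, and assembling the (down-pointing) weight matrix of $\rhd(\mathbf{P}\!\nearrow)$ — again with a $0$ in the corner since target and source coincide — yields $\begin{bmatrix}a+d&-a\\d&0\end{bmatrix}=\lloop_\nearrow(\begin{bmatrix}a&b\\c&d\end{bmatrix})$. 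I expect the only real difficulty here to be bookkeeping rather than anything conceptual: one must carry the two minus signs built into the definition of $\rmm_w(\mathbf{P}\!\searrow)$ faithfully so that the off-diagonal entry comes out as $b-d$ and not $d-b$, and one should be careful to state and justify the no-cycle remark so that the ideal-lattice description is applicable; once that is in place each entry is a one-line conditioned weight-polynomial identity.
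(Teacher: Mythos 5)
Your proposal is correct and follows essentially the same route as the paper's own proof: an entry-by-entry identification of the weight matrix of the source loop, using the fact that the ideals of the closed-up poset are exactly the ideals of $\mathbf{P}$ avoiding the single forbidden pattern imposed by the new relation between $x_R$ and $x_L$, together with inclusion--exclusion on the conditioned weight polynomials. Two small remarks: your careful treatment of the bottom-left entry (ideals \emph{not} containing the target $x_R$, giving $a+b$, resp.\ $d$) is actually cleaner than the paper's prose for that entry, and in the $\nearrow$ case you compute the down-pointing matrix $\rmm_w((\rhd(\mathbf{P}\!\nearrow))\!\searrow)$, which matches Table~\ref{table:tableofmoves} and is the intended reading of the theorem (the $\nearrow$ on the right-hand side of the stated identity is evidently a typo).
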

\begin{proof} Consider the entries of the matrix $\rmm_w( (\rhd(\mathbf{P}\!\searrow))\!\searrow)$. As the poset has an added connection $x_R\succeq x_L$, the top left corner is given by the trace of $\rmm_w(\mathbf{P}\!\searrow)$, which matches the top left entry of $\lloop_\searrow$. The top right corner is generated by all the ideals of this poset that contain the source vertex $x_R$, times $(-1)$. As the ideals that contain $x_R$ also contain $x_L$ by the relation $x_R\succeq x_L$, we have all the ideals of our original poset that contain $x_L$ minus the ideals that contain $x_L$ but not $x_R$, multiplied by $(-1)$. That is the top right entry minus the bottom right entry of $\rmm_w(\mathbf{P}\!\searrow)$. For the bottom left entry of $\rmm_w( (\rhd(\mathbf{P}\!\searrow))\!\searrow)$, we need the weight polynomial generated by the ideals containing the target vertex, which is also $x_R$. These remain unaffected by $x_R\succeq x_L$, so we can just read them of from the top right entry of $\rmm_w(\mathbf{P}\!\searrow)$, multiplied by $(-1)$. Finally, as the source and target vertices are the same, the bottom right entry must be $0$. The entries of $\rmm_w( (\rhd(\mathbf{P}\!\searrow))\!\searrow)$ can be similarly computed.
\end{proof}

We can also define the target loop operation, where any further connections are made using the target (left) vertex. The \definition{target loop} of $\mathbf{P}\!\searrow$ is the oriented poset $(\close(\mathbf{P}\!\searrow),x_L,x_L)$, denoted by $\lhd(\mathbf{P}\!\searrow)$. The target loop  $\lhd(\mathbf{P}\!\nearrow)$ is similarly defined. 

For the target loop, the direction of the loop does not change the function, so one function is sufficent.

\begin{align*}
    \rloop\left( \begin{bmatrix}
    a&b\\c&d
    \end{bmatrix}\right): =& \begin{bmatrix}
    a+d& c-a\\c+d&0
    \end{bmatrix}
\end{align*}

\begin{thm}~\label{theorem2} The weight matrices correponding to $\lhd(\mathbf{P}\!\searrow)$ and $\lhd(\mathbf{P}\!\nearrow)$ can be calculated via the $\lloop$ functions as follows: 
\begin{align*}
    \rloop(\rmm_w(\mathbf{P}\!\searrow))&= \rmm_w( (\lhd(\mathbf{P}\!\searrow))\!\searrow), \qquad 
    \rloop(\drm_w(\mathbf{P}\!\nearrow))= \rmm_w( (\lhd(\mathbf{P}\!\nearrow))\!\nearrow).
\end{align*}
\end{thm}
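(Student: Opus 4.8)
The plan is to follow the same template as the proof of Theorem~\ref{theorem1}, with the source vertex replaced everywhere by the target vertex. Write $\rmm_w(\mathbf{P}\!\searrow)=\begin{bmatrix} a & b\\ c & d\end{bmatrix}$, so that by definition $a=\rank(\mathbf{P};w)$, $b=-\rank(\mathbf{P};w)|_{x_R\in I}$, $c=\rank(\mathbf{P};w)|_{x_L\notin I}$ and $d=-\rank(\mathbf{P};w)|_{x_R\in I,\,x_L\notin I}$. Two observations drive everything. First, the order ideals of $\close(\mathbf{P}\!\searrow)$ are exactly the order ideals $I$ of $P$ that are compatible with the added cover $x_L\preceq x_R$, i.e.\ those for which $x_R\in I$ forces $x_L\in I$; equivalently, they are all ideals of $P$ other than the ones with $x_R\in I$ and $x_L\notin I$. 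Second, in $\lhd(\mathbf{P}\!\searrow)=(\close(\mathbf{P}\!\searrow),x_L,x_L)$ the source and the target coincide, so any weight-matrix entry that asks for ideals containing the source but not the target must vanish.

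Granting these, the four entries of $\rmm_w((\lhd(\mathbf{P}\!\searrow))\!\searrow)$ can be read off directly. The $(1,1)$ entry is $\rank(\close(\mathbf{P}\!\searrow);w)=a-(-d)=a+d$ by the first observation, and the $(2,2)$ entry is $0$ by the second. For the $(1,2)$ entry we need $-\rank(\close(\mathbf{P}\!\searrow);w)|_{x_L\in I}$; since any ideal that contains $x_L$ automatically respects $x_L\preceq x_R$, this is $-\rank(\mathbf{P};w)|_{x_L\in I}=-(a-c)=c-a$. For the $(2,1)$ entry we need $\rank(\close(\mathbf{P}\!\searrow);w)|_{x_L\notin I}$; an ideal of $\close(\mathbf{P}\!\searrow)$ that omits $x_L$ must also omit $x_R$, so this is $\rank(\mathbf{P};w)|_{x_R\notin I,\,x_L\notin I}=c+d$. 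Hence $\rmm_w((\lhd(\mathbf{P}\!\searrow))\!\searrow)=\begin{bmatrix} a+d & c-a\\ c+d & 0\end{bmatrix}=\rloop(\rmm_w(\mathbf{P}\!\searrow))$, which is the first identity.

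The identity for $\lhd(\mathbf{P}\!\nearrow)$ is obtained in the same way. Now the closure adds the cover $x_R\preceq x_L$, so the ideals of $\close(\mathbf{P}\!\nearrow)$ are the ideals $I$ of $P$ for which $x_L\in I$ forces $x_R\in I$; writing $\drm_w(\mathbf{P}\!\nearrow)=\begin{bmatrix} a & b\\ c & d\end{bmatrix}$ and using the trace identity $\rank(\close(\mathbf{P}\!\nearrow);w)=\operatorname{tr}(\drm_w(\mathbf{P}\!\nearrow))=a+d$ from Theorem~\ref{thm:oriented2}, together with the same source-equals-target collapse, one computes the four entries exactly as above and recognizes $\rloop(\drm_w(\mathbf{P}\!\nearrow))$.

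The bookkeeping is elementary; the step I expect to require the most care — the \emph{main obstacle} — is pinning down the orientation conventions precisely: which of the two closures is used, which of $x_L,x_R$ becomes the common source-target of the resulting loop, and in which direction the remaining tail points. A sign or orientation slip at that stage propagates into every entry of the matrix, so it is worth checking these choices against a small worked example (say a fence poset on two or three vertices) before writing the general identities. Once the orientation data is fixed, the verification needs no idea beyond those already used in the proof of Theorem~\ref{theorem1}.
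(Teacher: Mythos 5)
Your proposal is correct and follows exactly the route the paper takes: the paper's proof of this theorem is literally ``one can calculate coordinate by coordinate as in the source loop case,'' and your entry-by-entry computation (trace in the $(1,1)$ slot, $c-a$ and $c+d$ from ideals containing/omitting $x_L$ in the closed poset, and $0$ because source and target coincide) is precisely that verification, carried out in more detail than the paper itself records. Your attention to the orientation conventions ($x_L\preceq x_R$ versus $x_R\preceq x_L$ in the two closures) is well placed, since that is the only place a slip could occur.
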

\begin{proof} One can calculate coordinate by coordinate as in the source loop case. 
\end{proof}

We can use the above theorems to get matrix formulas for a variety of arcs. Here we show how to use matrices to calculate the expansion formula for the arc from Figure~\ref{fig:twalks}. See Examples~\ref{ex:twalksviatwalks} and \ref{ex:twalksviaposets} for the same calculation using $T$-walks and poset ideals. See the Appendix~\ref{appendix} for calculations for several examples of differing complexity. See Table~\ref{table:tableofmoves} for a list of all the operations listed above, their formulas and examples. 

\begin{example}\label{ex:twalksviamatrices}
We can form the poset from Figure~\ref{fig:twalks4} by one down step, one up step and another down step as follows.
\begin{figure}[H]
    \centering
    \scalebox{.8}{
\begin{tikzpicture}
\draw[->, blue, dashed, thick] (1.5,-1)--(2.5,-1/3);
\draw[->, blue, dashed, thick] (3,0)--(4,-2/3);
\draw[->, blue, dashed, thick] (4.5,-1)--(5.5,-5/3);

\fill (1.5,-1) circle(.2) node[white] {$1$} ;
\draw (1.6,-1.5) node{$\displaystyle w_1$} ;
\draw (3.1,-.5) node{$\displaystyle w_2$} ;
\fill (4.5,-1) circle(.2) node[white] {$3$} ;
\draw (4.6,-1.5) node{$\displaystyle w_3$} ;
\fill[white] (1.5,-1) circle(.2) ;
\fill[red] (1.5,-1) circle(.1) ;
\draw[red] (1.5,-1) circle(.2);
\fill[white] (3,0) circle(.2) ;
\fill[red] (3,0) circle(.1) ;
\draw[red] (3,0) circle(.2);
\fill[white] (4.5,-1) circle(.2) ;
\fill[red] (4.5,-1) circle(.1) ;
\draw[red] (4.5,-1) circle(.2);
\end{tikzpicture} \raisebox{15mm}{$\qquad$ \scalebox{2}{$\Rightarrow$} $\qquad$}
\begin{tikzpicture}
\draw (1.5,-1)-- (3,0)--(4.5,-1);
\draw[->, blue, dashed,thick] (4.5,-1)--(5.5,-5/3);
\fill (1.5,-1) circle(.1);
\draw (1.6,-1.5) node{$\displaystyle w_2$} ;
\fill (3,0) circle(.1) ;
\draw (3.1,-.5) node{$\displaystyle w_3$} ;
\fill[blue] (4.5,-1) circle(.15);
\draw (4.6,-1.5) node{$\displaystyle w_4$} ;
\fill[white] (1.5,-1) circle(.2) ;
\fill[red] (1.5,-1) circle(.1) ;
\draw[red] (1.5,-1)circle(.2);
\end{tikzpicture}}
\end{figure}
The corresponding matrix formula is:
\begin{align*}
\displaystyle & \mup(w_1)\mdo(w_2)\mdo(w_3)=
    \mupm{w_1}\mdom{w_2}\mdom{w_3}\\
    &=\begin{bmatrix}
        1+w_1+w_3+w_1w_3+w_1w_2w_3 & -w_3 -w_1w_3 - w_1w_2w_3\\ 1 + w_3& -w_3
    \end{bmatrix}.
\end{align*}
The top left hand corner gives us the weight polynomial. Then we plug in the labels from Figure~\ref{fig:twalks4} and multiply with $x(T_0)$ like in Example~\ref{ex:twalksviaposets} to get the expansion formula as desired.
\end{example}

\section{The case of self-folded triangles}\label{sec:selffold}

In this section, we will consider the cases surfaces where the initial triangulation has a self-folded triangle. While the general idea remains the same, there are some intricacies in the self-folded case that one needs to pay attention to.

Let $\gamma$ on be an arc on a surface where the initial triangulation has a self-folded triangle. We can think of a self-folded triangle as a degenerate case where a \definition{radius} arc is considered as two of the three sides of the triangle. To compute the correct $T$-walks expansion, we will need to seperate this radius arc $r$ into two parts $r$ and $\underline{r}$, and we assume $\underline{r}$ is following $r$ in the counterclockwise direction, as seen below in Figure~\ref{fig:opening}.

\begin{figure}[H]
\includegraphics[width=7.6cm]{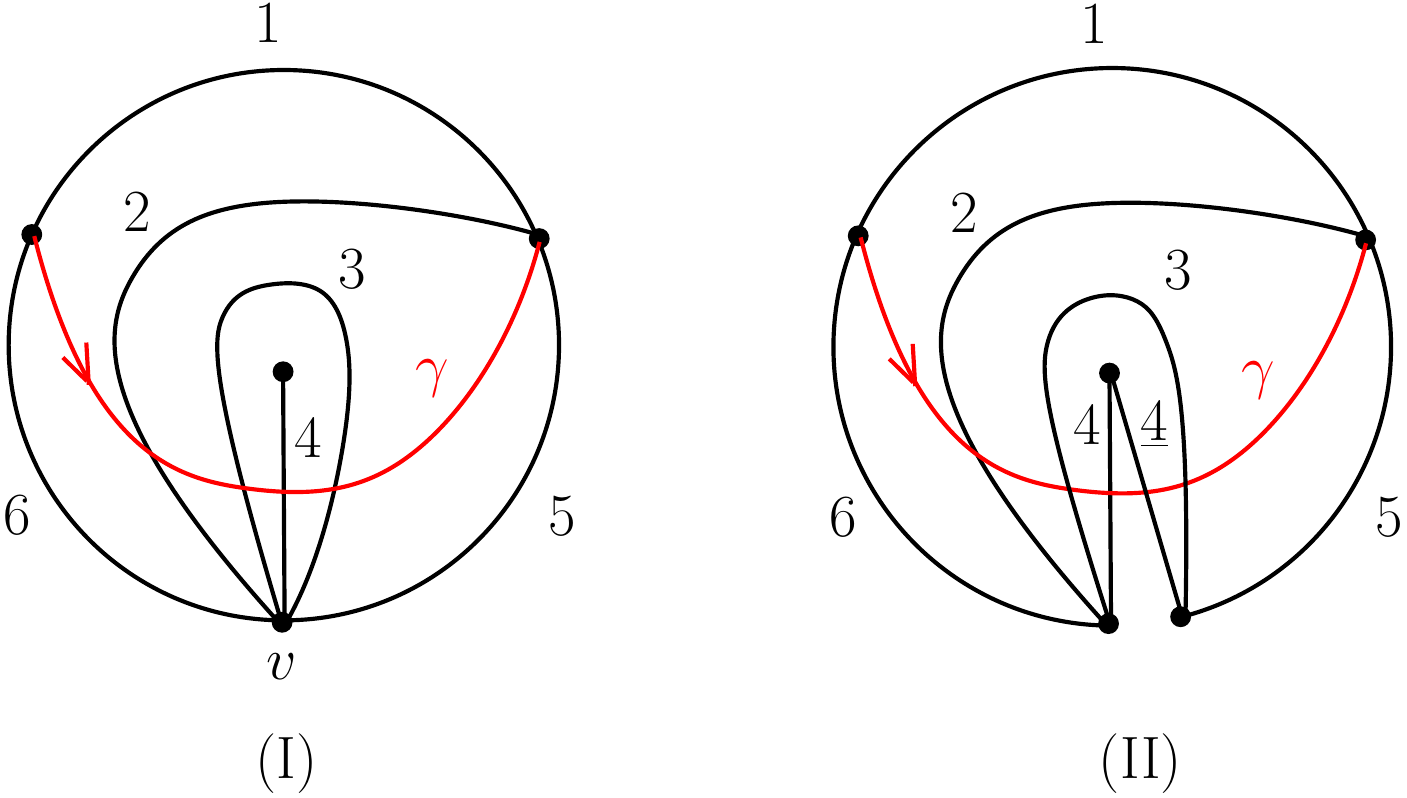}
\caption{Illustration of opening of the radius.} \label{fig:opening}
\end{figure}

More precisely, we seperate the vertex $v$ where the arcs of our self-folded triangle connect into two vertices so that we obtain an actual triangle. (This perspective is often used for self-folded case to resove ambiguities.) Even though our arc $\gamma$ now crosses both $r$ and $\underline{r}$ in this visualisation, we will only count one of these crossings when calculating the associated $T$-walks.

\begin{remark}~\label{rmk:res} If the orientation of the radius in the original surface is $0$, then we take the crossing $\alpha_i$ to be $r$. Otherwise, if the orientation is $1$, we use $\underline{r}$. Note that this only restricts the $\alpha$'s in the associated $T$-walk. For the $\beta$'s, we are allowed to use $r$ or $\underline{r}$ in any orientation. 
\end{remark}

\begin{example}
    For the arc $\gamma$ in Figure~\ref{fig:opening}, we get five $T$-walks, listed below with their corresponding orientation vectors. 
\begin{itemize}
    \item For $\vec{v_1}=(0,0,0,0)$, we get the $T$-walk  $(6,2,2,3,3,4,\underline{4},3,2)$,
    \item For $\vec{v_2}=(0,0,0,1)$, we get the $T$-walk   $(6,2,2,3,3,4,4,3,5)$,
    \item For $\vec{v_3}=(0,0,1,1)$, we get the $T$-walk  $(6,2,2,3,\underline{4},\underline{4},3,3,5)$,
    \item For $\vec{v_4}=(0,1,1,1)$, we get the $T$-walk  $(6,2,5,3,4,\underline{4},3,3,5)$,
    \item For $\vec{v_5}=(1,1,1,1)$, we get the $T$-walk   $(1,2,3,3,4,\underline{4},3,3,5)$.
\end{itemize}
The other orientation vectors ($9$ other possible choices) do not lead to $T$-walks; some will not respect the restriction in Remark~\ref{rmk:res}.

\begin{figure}[H]
\includegraphics[width=2.7cm]{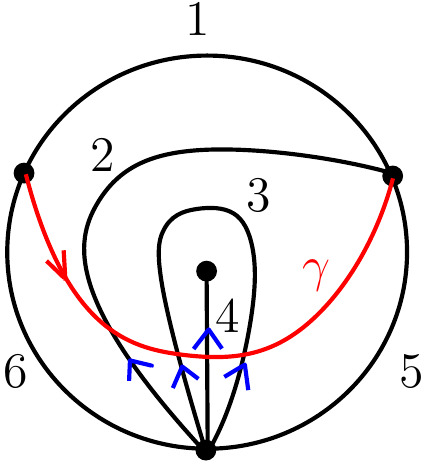}
\caption{Minimal orientation $\vec{v_1}$ shown for this surface.}
\end{figure}

\end{example}

The weight in $x$ variables for the $T$-walks are defined exactly as in the non self-folded case, where both $r$ and $\underline{r}$ contribute $x_r$. For the $y$ variable of $r$ or $\underline{r}$, there is a slight change: we use  $y_r/y_l$ where $l$ is the loop arc around the radius $r$. 

\begin{example}\label{ex:selffold}
    The $x$ and $y$ values for the five $T$-walks for $\gamma$ from Figure~\ref{fig:opening} are given below. 
\begin{itemize}
    \item $x_{v_1}=\frac{x_2x_6}{x_3}$, $\quad y_{v_1}=1$,
    \item $x_{v_2}=\frac{x_5x_6}{x_3}$, $\quad y_{v_2}=y_3$,
    \item $x_{v_3}=\frac{x_5x_6}{x_3}$, $\quad y_{v_3}=y_4$,
    \item $x_{v_4}=\frac{x_5^2x_6}{x_2x_3}$, $\quad y_{v_4}=y_3y_4$.
    \item $x_{v_5}=\frac{x_1x_5}{x_2}$, $\quad y_{v_5}=y_2y_3y_4$.
\end{itemize}

By Theorem~\ref{thm:texpansion} we have:
\[
\displaystyle x_\gamma= \frac{x_2x_6}{x_3}+ \frac{x_5x_6y}{x_3}y_3+ \frac{x_5x_6}{x_3}y_4 +\frac{x_5^2x_6}{x_2x_3}y_3y_4+\frac{x_1x_5}{x_2} y_2y_3y_4.
\]
\end{example}

We now calculate this expansion using the labeled poset method.

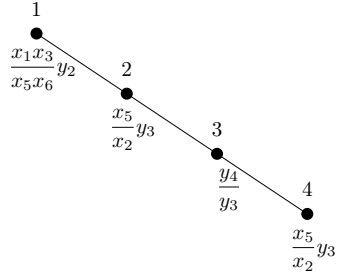
\begin{figure}[H]
 \begin{center}\scalebox{.8}{
\begin{tikzpicture}
\draw (0,0)--(4.5,-3); 
\fill (0,0) circle(.1) node[above,yshift=.17cm] {1};
\draw (.1,-.6) node{$\displaystyle \frac{x_1 x_3}{x_{5}x_{6}} y_2$} ;
\fill (1.5,-1) circle(.1) node[above,yshift=.17cm] {2} ;
\draw (1.6,-1.6) node{$\displaystyle \frac{x_{5}}{x_2}y_3$};
\fill (3,-2) circle(.1) node[above,yshift=.17cm]{3} ;
\draw (3.2,-2.6) node{$\displaystyle \frac{y_4}{y_3}$} ;
\fill (4.5,-3) circle(.1) node[above,yshift=.17cm] {4};
\draw (4.6,-3.6) node{$\displaystyle \frac{x_{5}}{x_2}y_3$};
\end{tikzpicture}}
\end{center}
\caption{Labeled poset for the arc from Figure~\ref{fig:opening}.}\label{fig:selffoldposet}
\end{figure}

When using the poset model for the calculations, the same formula can be used for the $x$ variables of radii and the loops:
$x(a_i):=\frac{x_{\alpha} x_{\beta}}{x_{\sigma} x_{\epsilon}}$
where $\alpha$ and $\beta$ are the labels of the arcs that follow the arc $e_i$ counterclockwise and $\sigma$ and $\epsilon$ are the arcs that follow $e_i$ clockwise. In fact, this formula becomes simpler, as radius arcs always cancel out. Also, if $r$ is a radius, we have full cancellation, giving us $x(r)=1$. For the $y$ variables, we again use $y_{e_i}$ unless $e_i=r$ is a radius. We set $y(r)=y_r/y_l$ where $l$ is the loop arc around the radius $r$, just like we did for the $T$-walks.

\begin{example} The labeled poset for the arc from Figure~\ref{fig:opening} is depicted in Figure~\ref{fig:selffoldposet}. We can calculate the weight polynomial directly, or looking at the top lefthand entry of the rank matrix:
\begin{align*}
&\mdom{w_1}\mdom{w_2}\mdom{w_3}\mdom{w_4}=\\
&\begin{bmatrix}
    1+w_4+w_3w_4+w_2w_3w_4+w_1w_2w_3w_4 & - w_4-w_3w_4-w_2w_3w_4-w_1w_2w_3w_4\\ 1+w_4+w_3w_4+w_2w_3w_4& - w_4-w_3w_4-w_2w_3w_4
\end{bmatrix}
\end{align*}
As we already computed the minimal $T$-walk $(6,2,2,3,3,4,\underline{4},3,2)$ to be $x_2x_6/{x_3}$ in Example~\ref{ex:selffold} above, we can just plug in the labels to get the cluster expansion from the weight polynomial of the poset.

\begin{align*}x_{\gamma}=\displaystyle {x(T_0)} \, \rank(P_\gamma;xy)&=\frac{x_2x_6}{x_3}\left( 1+\frac{x_5}{x_2}y_3+\frac{x_5}{x_2}y_4+\frac{x_5^2}{x_2^2}y_3y_4+\frac{x_1x_3x_5}{x_2^2x_6}y_2y_3y_4\right)\\
&=\frac{x_2x_6}{x_3}+\frac{x_5x_6}{x_3}y_3+\frac{x_5x_6}{x_3}y_4+\frac{x_5^2x_6}{x_2x_3}y_3y_4+\frac{x_1x_5}{x_2}y_2y_3y_4.
\end{align*}
\end{example}

  

\appendix

\section{Matrix Calculation Examples}\label{appendix}

In this section, we illustrate the the matrix calculation techniques described above through different kinds of examples that come up in the setting of cluster algebras. We show how posets corresponding to arcs can be built step by step via matrices, and how the corresponding expansion formula can be calculated in each case. We have chosen to use some examples that already exist in the literature (See \cite{wilson}) for easy comparison. Because of that, we use the following variation of Equation~\ref{eqn:main} from Remark~\ref{remark:clusterversion} in this section: \begin{align*} 
x_{\gamma}= \displaystyle \frac{x(M_{-})}{cross(T,\gamma)} 
 \rank(P_{\gamma};xy).
\end{align*} 

\subsection{Fence posets and snake graphs}

If the fence poset corresponding to our arc has no loops, we can build it using only down and up weight matrices as in Equation~\ref{down-up}, where the direction of the matrix for vertex $i$ is determined by the direction of the connection between $i$ and $i+1$. We will always use the down weight matrix for the final vertex as a convention.

Consider the arc from Example~\ref{ex:noloop2}, accompanied by Figures \ref{fig:sch10}, \ref{fig:min} and \ref{fig:minimalscompared}. 
The underlying fence can be built by combining up and down steps as follows:
\begin{center} \scalebox{.8}{
\begin{tikzpicture}
\draw[->, blue, dashed, thick] (0,0)--(1,-2/3);
\draw[->, blue, dashed, thick] (1.5,-1)--(2.5,-1/3);
\draw[->, blue, dashed, thick] (3,0)--(4,2/3);
\draw[->, blue, dashed, thick] (4.5,1)--(5.5,1/3);
\draw[->, blue, dashed, thick] (6,0)--(7,2/3);
\draw[->, blue, dashed, thick] (7.5,1)--(8.5,5/3);
\draw[->, blue, dashed, thick] (9,2)--(10,4/3);
\draw[->, blue, dashed, thick] (10.5,1)--(11.5,1/3);
\draw[->, blue, dashed, thick] (12,0)--(13,-2/3);
\fill (0,0) circle(.2) node[white] {$1$} ;
\draw (.1,-.5) node{$\displaystyle w_1$} ;%
\fill (1.5,-1) circle(.2) node[white] {$2$} ;
\draw (1.6,-1.5) node{$\displaystyle w_2$} ;
\fill (3,0) circle(.2) node[white] {$3$} ;
\draw (3.1,-.5) node{$\displaystyle w_3$} ;
\fill (4.5,1) circle(.2) node[white] {$4$} ;
\draw (4.6,.5) node{$\displaystyle w_4$} ;
\fill (6,0) circle(.2) node[white] {$5$} ;
\draw (6.1,-.5) node{$\displaystyle w_5$} ;
\fill (7.5,1) circle(.2) node[white] {$6$} ;
\draw (7.6,.5) node{$\displaystyle w_6$} ;
\fill (9,2) circle(.2) node[white] {$7$} ;
\draw (9.1,1.5) node{$\displaystyle w_7$} ;
\fill (10.5,1) circle(.2) node[white] {$8$} ;
\draw (10.6,.5) node{$\displaystyle w_8$} ;
\fill (12,0) circle(.2) node[white] {$9$} ;
\draw (12.1,-.5) node{$\displaystyle w_9$} ;
\fill[white] (0,0) circle(.2) ;
\fill[red] (0,0) circle(.1) ;
\draw[red] (0,0) circle(.2);
\fill[white] (1.5,-1) circle(.2) ;
\fill[red] (1.5,-1) circle(.1) ;
\draw[red] (1.5,-1) circle(.2);
\fill[white] (3,0) circle(.2) ;
\fill[red] (3,0) circle(.1) ;
\draw[red] (3,0) circle(.2);
\fill[white] (4.5,1) circle(.2) ;
\fill[red] (4.5,1) circle(.1) ;
\draw[red] (4.5,1) circle(.2);
\fill[white] (6,0) circle(.2) ;
\fill[red] (6,0) circle(.1) ;
\draw[red] (6,0) circle(.2);
\fill[white] (7.5,1) circle(.2) ;
\fill[red] (7.5,1) circle(.1) ;
\draw[red] (7.5,1) circle(.2);
\fill[white] (9,2) circle(.2) ;
\fill[red] (9,2) circle(.1) ;
\draw[red] (9,2) circle(.2);
\fill[white] (10.5,1) circle(.2) ;
\fill[red] (10.5,1) circle(.1) ;
\draw[red] (10.5,1) circle(.2);
\fill[white] (12,0) circle(.2) ;
\fill[red] (12,0) circle(.1) ;
\draw[red] (12,0) circle(.2);
\end{tikzpicture}}
\end{center}

The corresponding matrix is given by: $\mdo(w_1)\mup(w_2)\mup(w_3)\mdo(w_4)\mup(w_5)\mup(w_6)\mdo(w_7)\mdo(w_8)\mdo(w_9)$. This can be expressed as a matrix product as follows:

\vspace{2mm}

\scalebox{.85}{$\mdom{w_1}\mupm{w_2}\mupm{w_3}\mdom{w_4}\mupm{w_5}\mupm{w_6}\mdom{w_7}\mdom{w_8}\mdom{w_9}$.}

\vspace{2mm}

The upper left entry gives us the weight polynomial $\rank(P_\gamma;w)$.

\begin{dmath*}
\rank(P_\gamma;w)=
1+ w_2 + w_5 + w_9 + w_1w_2 + w_2w_3 + w_2w_5 + w_5w_6 + w_2w_9 + w_5w_9 + w_8w_9 + w_1w_2w_3 + w_1w_2w_5 + w_2w_3w_5 + w_2w_5w_6 + w_1w_2w_9 + w_2w_3w_9 + w_2w_5w_9 + w_5w_6w_9 + w_2w_8w_9 + w_5w_8w_9  + w_1w_2w_3w_5 + w_2w_3w_4w_5 + w_1w_2w_5w_6 + w_2w_3w_5w_6 + w_1w_2w_3w_9 + w_1w_2w_5w_9 + w_2w_3w_5w_9 + w_2w_5w_6w_9 + w_1w_2w_8w_9 + w_2w_3w_8w_9 + w_2w_5w_8w_9 + w_5w_6w_8w_9 + w_1w_2w_3w_4w_5 + w_1w_2w_3w_5w_6 + w_2w_3w_4w_5w_6 + w_1w_2w_3w_5w_9 + w_2w_3w_4w_5w_9 + w_1w_2w_5w_6w_9 + w_2w_3w_5w_6w_9 + w_1w_2w_3w_8w_9 + w_1w_2w_5w_8w_9 + w_2w_3w_5w_8w_9 + w_2w_5w_6w_8w_9 + w_5w_6w_7w_8w_9 + w_1w_2w_3w_4w_5w_6 + w_1w_2w_3w_4w_5w_9 + w_1w_2w_3w_5w_6w_9 + w_2w_3w_4w_5w_6w_9 + w_1w_2w_3w_5w_8w_9 + w_2w_3w_4w_5w_8w_9 + w_1w_2w_5w_6w_8w_9 + w_2w_3w_5w_6w_8w_9 + w_2w_5w_6w_7w_8w_9  + w_1w_2w_3w_4w_5w_6w_9 + w_1w_2w_3w_4w_5w_8w_9 + w_1w_2w_3w_5w_6w_8w_9 + w_2w_3w_4w_5w_6w_8w_9 + w_1w_2w_5w_6w_7w_8w_9 + w_2w_3w_5w_6w_7w_8w_9+ w_1w_2w_3w_4w_5w_6w_8w_9 + w_1w_2w_3w_5w_6w_7w_8w_9 + w_2w_3w_4w_5w_6w_7w_8w_9
+w_1w_2w_3w_4w_5w_6w_7w_8w_9 
\end{dmath*}

We now evaluate the weight polynomial with the labels from Example~\ref{ex:noloop2} in the formula below to get $64$ term expansion formula. 

 \begin{align*}
  x_{\gamma}=\displaystyle \frac{x_1x_3x_4^2x_6x_7^2x_8x_{10}}{x_1x_2x_3x_4x_5x_6x_7x_8x_9x_{10}} \rank(P_\gamma;xy)=\displaystyle \frac{x_4x_7}{x_2x_5x_9} \rank(P_\gamma;xy).
\end{align*}

Note that while looking at larger triangulations increases the number of matchings exponentially, in the matrix expansion it just corresponds to adding one more matrix per triangle. 

\subsection{Circular fence posets and band graphs} Band graphs are obtained by considering the closed curves on the surfaces. We can create their associated snake graphs by identifying a boundary edge of the first tile with a boundary edge of the last tile, where both edges have the same sign. 

Consider the following is an example of a closed arc on a surface of an annulus with initial triangulation with five arcs, depicted in Figure~\ref{fig:band}. It shows the corresponding band graph with the fence poset drawn.  

\begin{figure}[H]
\includegraphics[width=13cm]{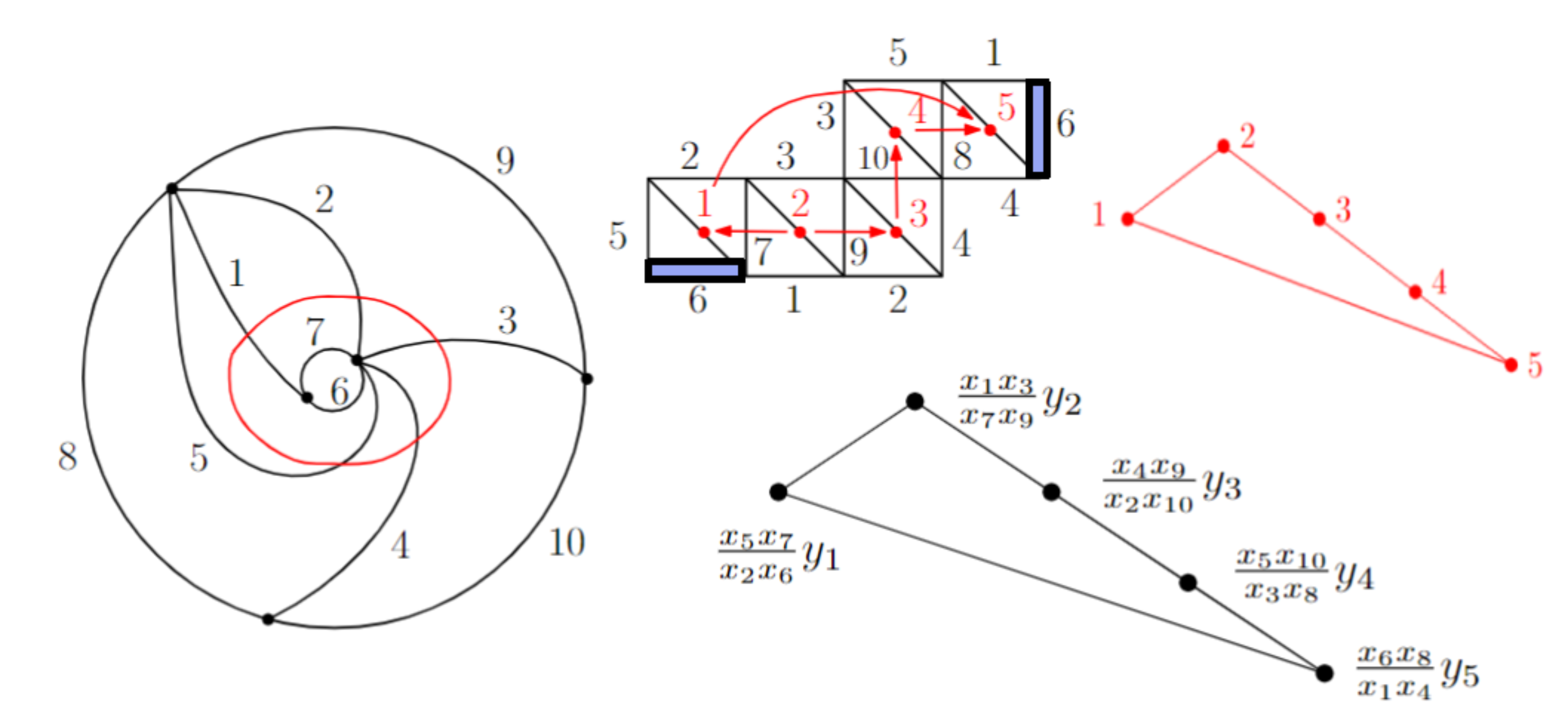}
\caption{A closed arc with the corresponding band graph, poset and labeled poset. The marked edges in the band graph are glued.} \label{fig:band}
\end{figure}

Note that it is preferable to write the poset for the closed arc directly from the surface. We build the corresponding oriented poset by combining up and down steps, and than taking the trace to connect the two ends.

\begin{center} \scalebox{.8}{
\begin{tikzpicture}
\draw[->, blue, dashed, thick] (7.5*.8,1)--(8.5*.8,5/3);
\draw[->, blue, dashed, thick] (9*.8,2)--(10*.8,4/3);
\draw[->, blue, dashed, thick] (10.5*.8,1)--(11.5*.8,1/3);
\draw[->, blue, dashed, thick] (12*.8,0)--(13*.8,-2/3);
\draw[->, blue, dashed, thick] (13.5*.8,-1)--(14.5*.8,-1/3);
\fill (7.5*.8,1) circle(.2) node[white] {$6$} ;
\draw (7.6*.8,.5) node{$\displaystyle w_1$} ;
\fill (9*.8,2) circle(.2) node[white] {$7$} ;
\draw (9.1*.8,1.5) node{$\displaystyle w_2$} ;
\fill (10.5*.8,1) circle(.2) node[white] {$8$} ;
\draw (10.6*.8,.5) node{$\displaystyle w_3$} ;
\fill (12*.8,0) circle(.2) node[white] {$9$} ;
\draw (12.1*.8,-.5) node{$\displaystyle w_4$} ;
\fill (12*.8,0) circle(.2) node[white] {$9$} ;
\draw (13.6*.8,-1.5) node{$\displaystyle w_5$} ;
\fill[white] (7.5*.8,1) circle(.2) ;
\fill[red] (7.5*.8,1) circle(.1) ;
\draw[red] (7.5*.8,1) circle(.2);
\fill[white] (9*.8,2) circle(.2) ;
\fill[red] (9*.8,2) circle(.1) ;
\draw[red] (9*.8,2) circle(.2);
\fill[white] (10.5*.8,1) circle(.2) ;
\fill[red] (10.5*.8,1) circle(.1) ;
\draw[red] (10.5*.8,1) circle(.2);
\fill[white] (12*.8,0) circle(.2) ;
\fill[red] (12*.8,0) circle(.1) ;
\draw[red] (12*.8,0) circle(.2);
\fill[white] (13.5*.8,-1) circle(.2) ;
\fill[red] (13.5*.8,-1) circle(.1) ;
\draw[red] (13.5*.8,-1) circle(.2);
\end{tikzpicture}
 \raisebox{15mm}{ \raisebox{.4cm}{\scalebox{2}{$\Rightarrow$}}}
\begin{tikzpicture}
\draw (7.5*.7,1)--(9*.7,2)--(13.5*.7,-1);
\draw[->, blue, dashed, thick] (13.5*.7,-1)--(14.5*.7,-1/3);
\fill (7.5*.7,1) circle(.1) ;
\draw (7.6*.7,.5) node{$\displaystyle w_1$} ;
\fill (9*.7,2) circle(.1) ;
\draw (9.1*.7,1.5) node{$\displaystyle w_2$} ;
\fill (10.5*.7,1) circle(.1) ;
\draw (10.6*.7,.5) node{$\displaystyle w_3$} ;
\fill (12*.7,0) circle(.1) ;
\draw (12.1*.7,-.5) node{$\displaystyle w_4$} ;
\fill[blue] (13.5*.7,-1) circle(.15);
\draw (13.6*.7,-1.5) node{$\displaystyle w_5$} ;
\fill[white] (7.5*.7,1) circle(.2) ;
\fill[red] (7.5*.7,1) circle(.1) ;
\draw[red] (7.5*.7,1) circle(.2);
\end{tikzpicture}
 \raisebox{15mm}{\raisebox{.4cm}{\scalebox{2}{$\Rightarrow$}} }
\begin{tikzpicture}
\draw (7.5*.7,1)--(9*.7,2)--(13.5*.7,-1)--(7.5*.7,1);
\fill (7.5*.7,1) circle(.1);
\draw (7.6*.7,.5) node{$\displaystyle w_1$};
\fill (9*.7,2) circle(.1);
\draw (9.1*.7,1.5) node{$\displaystyle w_2$};
\fill (10.5*.7,1) circle(.1);
\draw (10.6*.7,.5) node{$\displaystyle w_3$};
\fill (12*.7,0) circle(.1);
\draw (12.1*.7,-.5) node[below,yshift=.4cm]{$\displaystyle w_4$};
\fill (13.5*.7,-1) circle(.1);
\draw (13.6*.7,-1.5) node{$\displaystyle w_5$};
\end{tikzpicture}}
\end{center}

We get the following formula for the weight polynomial:
\begin{align*}
 \rank(P_\gamma;xy)&=   \operatorname{tr}(\mup(w_1)\mdo(w_2)\mdo(w_3)\mdo(w_4)\mup(w_5))\\
    &=\operatorname{tr}\left(\mupm{w_1}\mdom{w_2}\mdom{w_3}\mdom{w_4}\mupm{w_5}\right)\\
    &=1+w_5+w_1w_5+w_4w_5+w_1w_4w_5+w_3w_4w_5+w_1w_3w_4w_5+w_1w_2w_3w_4w_5.
\end{align*}

As the minimal matching gives $x_1x_2^2x_3x_4$, the expansion formula is as follows:
\begin{dmath*}x_{\gamma}=\displaystyle {\frac{x_1x_2^2x_3x_4}{x_1x_2x_3x_4x_5}} \rank(P_\gamma;xy)=\frac{x_2}{x_5}+\frac{x_2x_6x_8}{x_1x_4x_5}y_5+\frac{x_7x_8}{x_1x_4}y_1y_5+\frac{x_2x_5x_6x_{10}}{x_1x_3x_4}y_4y_5+\frac{x_5^2x_7x_{10}}{x_1x_3x_4}y_1y_4y_5+\frac{x_4x_5x_6x_{9}}{x_1x_3x_4}y_3y_4y_5+\frac{x_5^2x_7x_{9}}{x_1x_2x_3}y_1y_3y_4y_5+\frac{x_5^2}{x_2}y_1y_2y_3y_4y_5.
\end{dmath*}
\subsection{Fence posets with added loops and loop graphs}

To build the fence poset corresponding to a loop graph, we will need the above operations as well as the source and target loop operations whenever they are necessary. We will do step by step calculations for three examples: one with a single notched arc, and two with the double notched arcs.

\begin{example}\label{ex:single} Consider the arc given in Figure~\ref{singlenotched}. This example can also be found in \cite{wilson}.

\begin{figure}[H]
\includegraphics[width=13cm]{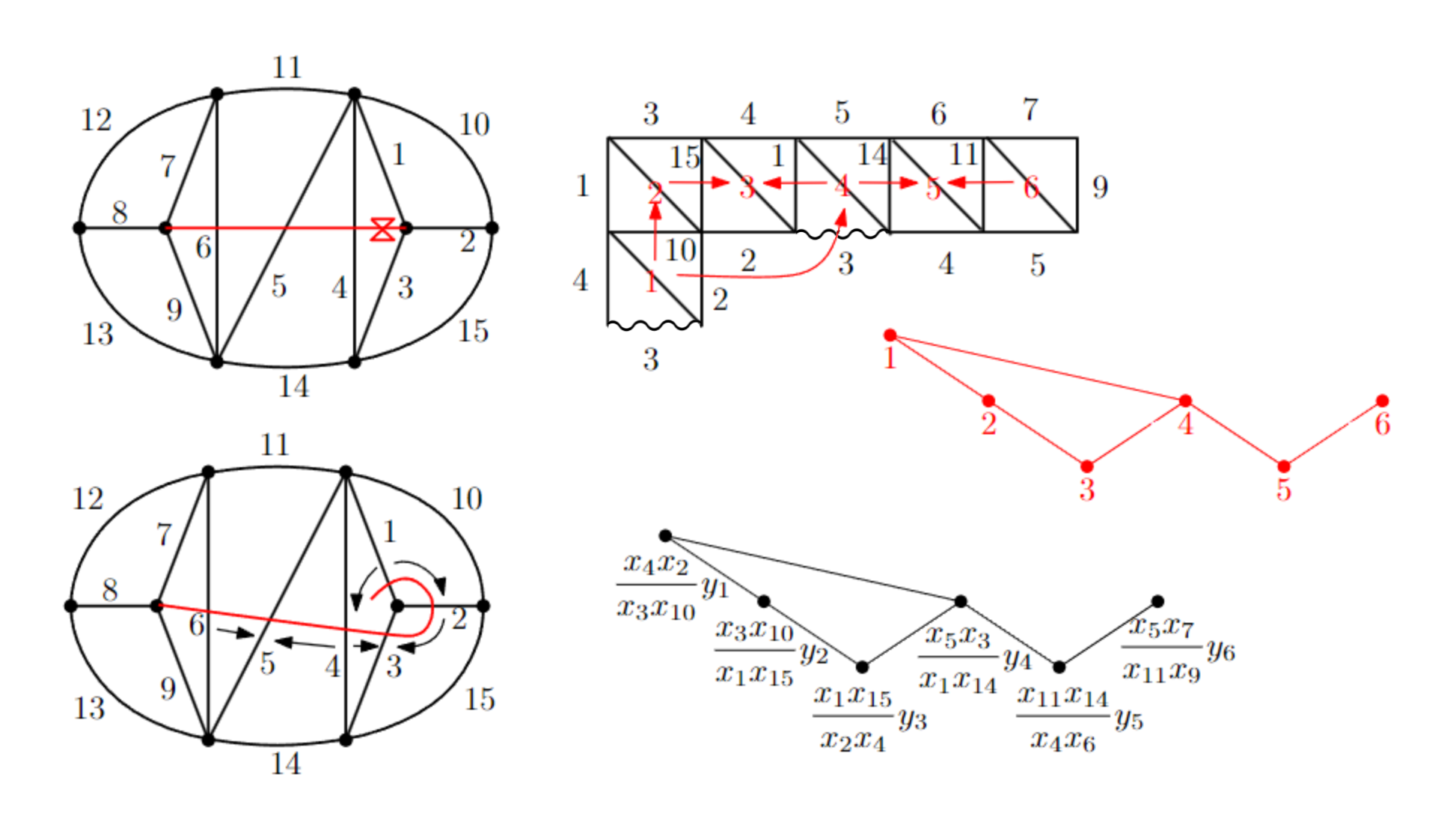}
\caption{An example with a single notched arc, from \cite{wilson}, with corresponding loop graph, poset and labeled poset.} \label{singlenotched}
\end{figure}

 Even though Figure~\ref{singlenotched} shows corresponding snake graph computation, we again note that it is just for convenience as the loop graph can be read directly from the surface. Let us build the underlying poset step by step to get the weight polynomial.

\vspace{5 mm}

\noindent{\textbf{Step $1$: }} We connect the first four vertices with appropriate arrows. We use the upwards arrow for vertex $4$ because we will connect it to vertex $1$ by adding the relation $1\succeq 4$ in Step 2.

\begin{figure}[H]
    \centering
    \scalebox{.8}{
\begin{tikzpicture}
\draw[->, blue, dashed, thick] (0,0)--(1,-2/3);
\draw[->, blue, dashed, thick] (1.5,-1)--(2.5,-5/3);
\draw[->, blue, dashed, thick] (3,-2)--(4,-4/3);
\draw[->, blue, dashed, thick] (4.5,-1)--(5.5,-1/3);
\fill (0,0) circle(.2) node[white] {$1$} ;
\draw (.1,-.5) node{$\displaystyle w_1$} ;%
\fill (1.5,-1) circle(.2) node[white] {$2$} ;
\draw (1.6,-1.5) node{$\displaystyle w_2$} ;
\fill (3,-2) circle(.2) node[white] {$3$} ;
\draw (3.1,-2.5) node{$\displaystyle w_3$} ;
\fill (4.5,-1) circle(.2) node[white] {$4$} ;
\draw (4.6,-1.5) node{$\displaystyle w_4$} ;
\fill[white] (0,0) circle(.2) ;
\fill[red] (0,0) circle(.1) ;
\draw[red] (0,0) circle(.2);
\fill[white] (1.5,-1) circle(.2) ;
\fill[red] (1.5,-1) circle(.1) ;
\draw[red] (1.5,-1) circle(.2);
\fill[white] (3,-2) circle(.2) ;
\fill[red] (3,-2) circle(.1) ;
\draw[red] (3,-2) circle(.2);
\fill[white] (4.5,-1) circle(.2) ;
\fill[red] (4.5,-1) circle(.1) ;
\draw[red] (4.5,-1) circle(.2);
\end{tikzpicture} \raisebox{15mm}{$\qquad$ \scalebox{2}{$\Rightarrow$} $\qquad$}
\begin{tikzpicture}
\draw (0,0)-- (3,-2)--(4.5,-1);
\draw[->, blue, dashed,thick] (4.5,-1)--(5.5,-1/3);
\fill (0,0) circle(.1) ;
\draw (.1,-.5) node{$\displaystyle w_1$} ;%
\fill (1.5,-1) circle(.1) ;
\draw (1.6,-1.5) node{$\displaystyle w_2$} ;
\fill (3,-2) circle(.1) ;
\draw (3.1,-2.5) node{$\displaystyle w_3$} ;
\fill[blue] (4.5,-1) circle(.15);
\draw (4.6,-1.5) node{$\displaystyle w_4$} ;
\fill[white] (0,0) circle(.2) ;
\fill[red] (0,0) circle(.1) ;
\draw[red] (0,0) circle(.2);
\end{tikzpicture}}
\end{figure}


The corresponding matrix is given by $\mdo(w_1)\mdo(w_2)\mup(w_3)\mup(w_4)$.
 \begin{align*}
S_{1} :=  \mdo(w_1)\mdo(w_2)\mup(w_3)\mup(w_4)= \mdom{w_1}\mdom{w_2}\mupm{w_3}\mupm{w_4}.
\end{align*}

\vspace{9 mm}

\noindent{\textbf{Step $2$: }} Now we need to connect the upwards pointing arrow from vertex $4$ with vertex $1$ to form a loop. We will use the source loop operation as we want to keep making connections using vertex $4$.
\begin{figure}[H]
    \centering \scalebox{.8}{
\begin{tikzpicture}
\draw (0,0)-- (3,-2)--(4.5,-1);
\draw[->, blue, dashed, thick] (4.5,-1)--(5.5,-1/3);
\fill (0,0) circle(.1) ;
\draw (.1,-.5) node{$\displaystyle w_1$} ;%
\fill (1.5,-1) circle(.1) ;
\draw (1.6,-1.5) node{$\displaystyle w_2$} ;
\fill (3,-2) circle(.1) ;
\draw (3.1,-2.5) node{$\displaystyle w_3$} ;
\fill[blue] (4.5,-1) circle(.15);
\draw (4.6,-1.5) node{$\displaystyle w_4$} ;
\fill[white] (0,0) circle(.2) ;
\fill[red] (0,0) circle(.1) ;
\draw[red] (0,0) circle(.2);
\end{tikzpicture} \raisebox{15mm}{$\qquad$ \scalebox{2}{$\Rightarrow$} $\qquad$}
\begin{tikzpicture}
\draw (0,0)-- (3,-2)--(4.5,-1)--(0,0);
\draw[->, blue, dashed, thick] (4.5,-1)--(5.5,-5/3);
\fill (0,0) circle(.1) ;
\draw (.1,-.5) node{$\displaystyle w_1$} ;%
\fill (1.5,-1) circle(.1) ;
\draw (1.6,-1.5) node{$\displaystyle w_2$} ;
\fill (3,-2) circle(.1) ;
\draw (3.1,-2.5) node{$\displaystyle w_3$} ;
\fill[blue] (4.5,-1) circle(.15);
\draw (4.6,-1.5) node{$\displaystyle w_4$} ;
\fill[white] (4.5,-1) circle(.2) ;
\fill[red] (4.5,-1) circle(.1) ;
\draw[red] (4.5,-1) circle(.2);
\end{tikzpicture}}
\end{figure}

We get the matrix: $S_2=\lloop_\nearrow(S_1)$. The formula for $S_2$ is:
$$  \lloop_\nearrow \left( \mdo(w_1)\mdo(w_2)\mup(w_3)\mup(w_4) \right).$$

\vspace{9 mm}

\noindent{\textbf{Step $3$: }} We connect an up matrix for $5$ and a down matrix for $6$.  An up matrix for $6$ would also work as we are making no more connections, but then we would consider sum of the top row of the resulting matrix as a final answer.

\begin{figure}[H]
    \centering \scalebox{.8}{
\begin{tikzpicture}
\draw (0*.75,0)-- (3*.75,-2)--(4.5*.75,-1)--(0*.75,0);
\draw[->, blue, dashed, thick] (4.5*.75,-1)--(5.5*.75,-5/3);
\fill (0*.75,0) circle(.1) ;
\draw (.1*.75,-.5) node{$\displaystyle w_1$} ;%
\fill (1.5*.75,-1) circle(.1) ;
\draw (1.6*.75,-1.5) node{$\displaystyle w_2$} ;
\fill (3*.75,-2) circle(.1) ;
\draw (3.1*.75,-2.5) node{$\displaystyle w_3$} ;
\draw (4.6*.75,-1.5) node{$\displaystyle w_4$} ;
\fill[white] (4.5*.75,-1) circle(.2) ;
\fill[red] (4.5*.75,-1) circle(.1) ;
\draw[red] (4.5*.75,-1) circle(.2);
\draw (6.1*.75,-2.5) node{$\displaystyle w_5$} ;
\draw (7.6*.75,-1.5) node{$\displaystyle w_6$} ;
\draw[->, blue, dashed, thick] (6*.75,-2)--(7*.75,-4/3);
\draw[->, blue, dashed, thick] (7.5*.75,-1)--(8.5*.75,-5/3);
\fill[white] (6*.75,-2) circle(.2) ;
\fill[red] (6*.75,-2) circle(.1) ;
\draw[red] (6*.75,-2) circle(.2);
\fill[white] (7.5*.75,-1) circle(.2) ;
\fill[red] (7.5*.75,-1) circle(.1) ;
\draw[red] (7.5*.75,-1) circle(.2);
\end{tikzpicture}
 \raisebox{15mm}{$\qquad$ \scalebox{2}{$\Rightarrow$} $\qquad$}
\begin{tikzpicture}
\draw (0*.75,0)-- (3*.75,-2)--(4.5*.75,-1)--(0*.75,0);
\draw(4.5*.75,-1)--(6*.75,-2)--(7.5*.75,-1);
\fill (0*.75,0) circle(.1) ;
\draw (.1*.75,-.5) node{$\displaystyle w_1$} ;%
\fill (1.5*.75,-1) circle(.1) ;
\draw (1.6*.75,-1.5) node{$\displaystyle w_2$} ;
\fill (3*.75,-2) circle(.1) ;
\draw (3.1*.75,-2.5) node{$\displaystyle w_3$} ;
\draw (4.6*.75,-1.5) node{$\displaystyle w_4$} ;
\fill[blue] (7.5*.75,-1) circle(.15) ;
\draw (6.1*.75,-2.5) node{$\displaystyle w_5$} ;
\draw (7.6*.75,-1.5) node{$\displaystyle w_6$} ;
\draw[->, blue, dashed, thick] (7.5*.75,-1)--(8.5*.75,-5/3);
\fill(6*.75,-2) circle(.1) ;
\fill[white] (4.5*.75,-1) circle(.2) ;
\fill[red] (4.5*.75,-1) circle(.1) ;
\draw[red] (4.5*.75,-1) circle(.2);
\end{tikzpicture}}
\end{figure}

We have $S_3=S_2 \cdot \tf \cdot \mup(w_5) \cdot \mdo(w_6)$. We get the following matrix expansion: \begin{align*}
&\lloop_\nearrow \left( \mdom{w_1}\mdom{w_2}\mupm{w_3}\mupm{w_4} \right) \mupm{w_5}\mdom{w_6}.
\end{align*}

The weight polynomial is given by the top left entry of the resulting matrix:
\begin{dmath*}
1+w_3+w_5+w_2w_3+w_3w_5+w_5w_6+w_2w_3w_5+w_3w_4w_5+w_3w_5w_6+w_2w_3w_4w_5+w_2w_3w_5w_6+w_3w_4w_5w_6+w_1w_2w_3w_4w_5+w_2w_3w_4w_5w_6+w_1w_2w_3w_4w_5w_6.
\end{dmath*}
 
 Now we can plug in the weights and obtain the expansion formula:
 \begin{dmath*}
 x_{\gamma}=\displaystyle {\frac{x(M_-)}{\operatorname{cross}(\gamma,T)}} \rank(P_\gamma;xy)=\frac{x_1x_2x_4^2x_6x_9}{x_1x_2x_3x_4x_5x_6}\rank(P_\gamma;xy)=\frac{x_4x_9}{x_3x_5}+\frac{x_1x_9x_{15}}{x_2x_3x_5}y_3+\frac{x_9x_{11}x_{14}}{x_3x_5x_6}y_5+\frac{x_9x_{10}}{x_2x_5}y_2y_3+\frac{x_1x_9x_{11}x_{14}x_{15}}{x_2x_3x_4x_5x_6}y_3y_5+\frac{x_{7}x_{14}}{x_3x_6}y_5y_6+\frac{x_9x_{10}x_{11}x_{14}}{x_2x_4x_5x_6}y_2y_3y_5+\frac{x_9x_{11}x_{15}}{x_2x_4x_6}y_3y_4y_5+\frac{x_1x_{7}x_{14}x_{15}}{x_2x_3x_4x_6}y_3y_5y_6+\frac{x_3x_9x_{10}x_{11}}{x_1x_2x_4x_6}y_2y_3y_4y_5+\frac{x_7x_{10}x_{14}}{x_2x_4x_6}y_2y_3y_5y_6+\frac{x_5x_{7}x_{15}}{x_2x_4x_6}y_3y_4y_5y_6+\frac{x_9x_{11}}{x_1x_6}y_1y_2y_3y_4y_5+\frac{x_3x_5x_{7}x_{10}}{x_1x_2x_4x_6}y_2y_3y_4y_5y_6+\frac{x_5x_7}{x_1x_6}y_1y_2y_3y_4y_5y_6.
 \end{dmath*}
 
 \end{example}
 
\begin{example}\label{ex:double1}

The following example is a continuation of the previous one but now we have a doubly notched arc attached to the punctures. It is almost the same as Example 5.10 from \cite{wilson} up to a relabeling of the vertices, which is done to unify all examples considered in this paper. 

\begin{figure}[ht]
\includegraphics[width=14cm]{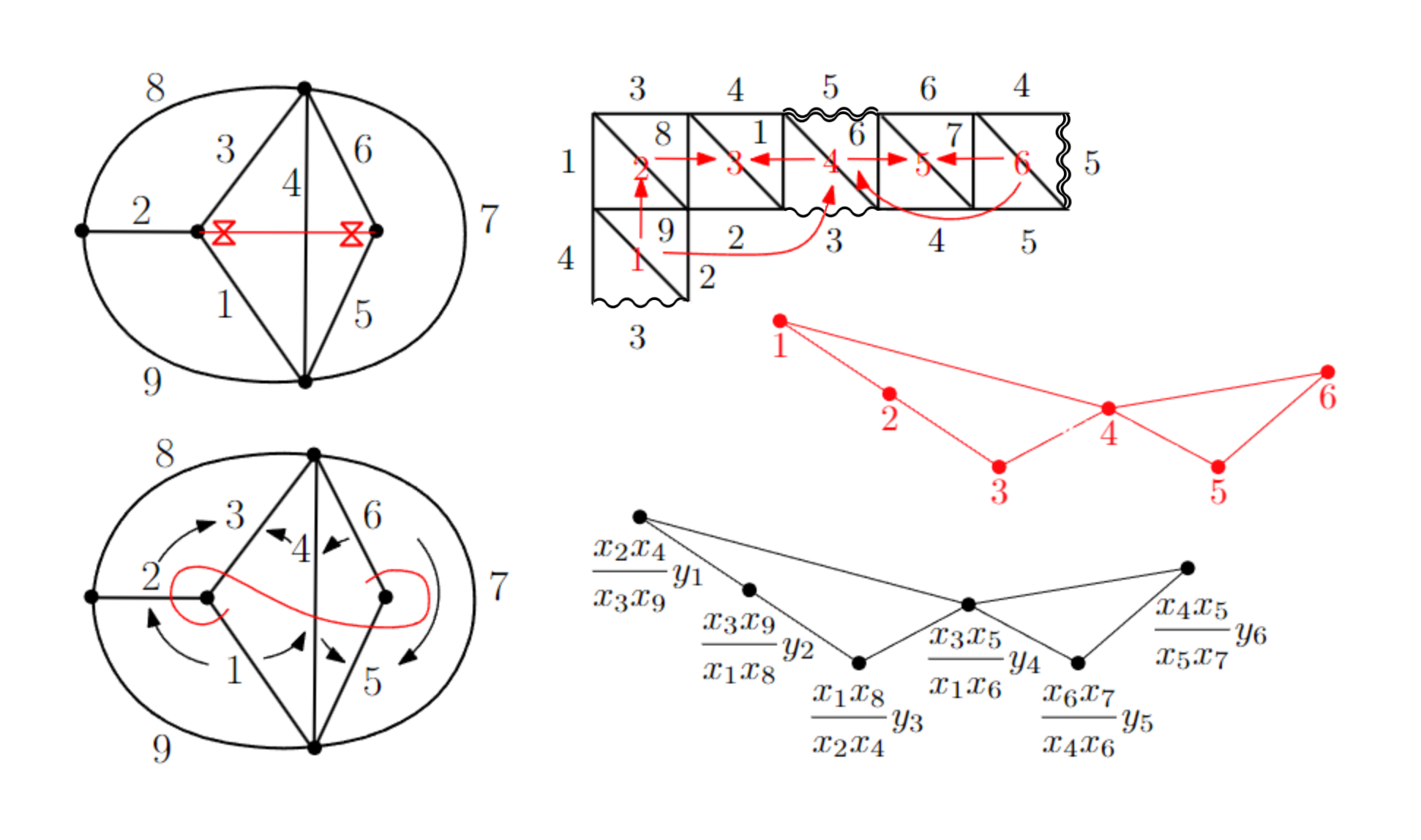}
\caption{A doubly notched arc with its associated loop graph, poset and labeled poset.} \label{doublynotched}
\end{figure}

This is almost the same graph as the above example with one loop, with the only addition being an extra arrow from vertex $6$ to vertex $4$. We can follow the same process up to Step $3$ where we obtained the matrix $S_3$ with the following expansion.

 \begin{align*}
&S_3=\lloop_\nearrow \left( \mdom{w_1}\mdom{w_2}\mupm{w_3}\mupm{w_4} \right)\tfm \mupm{w_5}\mdom{w_6}.
\end{align*}

\noindent{\textbf{Step $4$: }} Take the trace to connect the source vertex $6$ with the target vertex $4$. As the arrow is already pointing downward, there is no need for a tail flip.

\begin{figure}[H]
    \centering \scalebox{.8}{
\begin{tikzpicture}
\draw (0*.75,0)-- (3*.75,-2)--(4.5*.75,-1)--(0*.75,0);
\draw(4.5*.75,-1)--(6*.75,-2)--(7.5*.75,-1);
\fill (0*.75,0) circle(.1) ;
\draw (.1*.75,-.5) node{$\displaystyle w_1$} ;%
\fill (1.5*.75,-1) circle(.1) ;
\draw (1.6*.75,-1.5) node{$\displaystyle w_2$} ;
\fill (3*.75,-2) circle(.1) ;
\draw (3.1*.75,-2.5) node{$\displaystyle w_3$} ;
\draw (4.6*.75,-1.5) node{$\displaystyle w_4$} ;
\fill[blue] (7.5*.75,-1) circle(.15) ;
\draw (6.1*.75,-2.5) node{$\displaystyle w_5$} ;
\draw (7.6*.75,-1.5) node{$\displaystyle w_6$} ;
\draw[->, blue, dashed, thick] (7.5*.75,-1)--(8.5*.75,-5/3);
\fill(6*.75,-2) circle(.1) ;
\fill[white] (4.5*.75,-1) circle(.2) ;
\fill[red] (4.5*.75,-1) circle(.1) ;
\draw[red] (4.5*.75,-1) circle(.2);
\end{tikzpicture}
 \raisebox{15mm}{$\qquad$ \scalebox{2}{$\Rightarrow$} $\qquad$}
\begin{tikzpicture}
\draw (0*.75,0)-- (3*.75,-2)--(4.5*.75,-1.1)--(0*.75,0);
\draw(4.5*.75,-1.1)--(6*.75,-2)--(7.5*.75,-.7);
\fill (0*.75,0) circle(.1) ;
\draw (.1*.75,-.5) node{$\displaystyle w_1$} ;%
\fill (1.5*.75,-1) circle(.1) ;
\draw (1.6*.75,-1.5) node{$\displaystyle w_2$} ;
\fill (3*.75,-2) circle(.1) ;
\draw (3.1*.75,-2.5) node{$\displaystyle w_3$} ;
\draw (4.6*.75,-1.6) node{$\displaystyle w_4$} ;
\fill (4.5*.75,-1.1) circle(.1) ;
\fill (7.5*.75,-.7) circle(.1) ;
\draw (6.1*.75,-2.5) node{$\displaystyle w_5$} ;
\draw (7.6*.75,-1.2) node{$\displaystyle w_6$} ;
\draw (7.5*.75,-.7)--(4.5*.75,-1.1);
\fill(6*.75,-2) circle(.1) ;
\end{tikzpicture}}
\end{figure}

 The trace of $S_3$ gives us the weight polynomial:
 \begin{dmath*}1+w_3+w_5+w_2w_3+w_3w_5+w_2w_3w_5+w_3w_4w_5+w_2w_3w_4w_5+w_3w_4w_5w_6+w_1w_2w_3w_4w_5+w_2w_3w_4w_5w_6+w_1w_2w_3w_4w_5w_6.\end{dmath*}
 
  Plugging in the weights we obtain the following expansion formula:
 \begin{dmath*}
 x_{\gamma}=\displaystyle {\frac{x(M_-)}{\operatorname{cross}(\gamma,T)}} \rank(P_\gamma;xy)=\frac{x_1x_2x_4^2x_6}{x_1x_2x_3x_4x_5x_6}\rank(P_\gamma;xy)=\frac{x_4}{x_3x_5}+\frac{x_1x_8}{x_2x_3x_5}y_3+\frac{x_7}{x_3x_5}y_5+\frac{x_9}{x_2x_5}y_2y_3+\frac{x_1x_7x_8}{x_2x_3x_4x_5}y_3y_5+\frac{x_7x_9}{x_2x_4x_5}y_2y_3y_5+\frac{x_7x_8}{x_2x_4x_6}y_3y_4y_5+\frac{x_3x_7x_9}{x_1x_2x_4x_6}y_2y_3y_4y_5+\frac{x_8}{x_2x_6}y_3y_4y_5y_6+\frac{x_7}{x_1x_6}y_1y_2y_3y_4y_5+\frac{x_3x_9}{x_1x_2x_6}y_2y_3y_4y_5y_6+\frac{x_4}{x_1x_6}y_1y_2y_3y_4y_5y_6.
 \end{dmath*}

\end{example}
 
 \begin{example} \label{ex:double2} Here we consider another doubly notched arc; this time with two disjoint loops in the corresponding loop graph, illustrated in Figure~\ref{doubly2}. 
 
\begin{figure}[H]
\includegraphics[width=15cm]{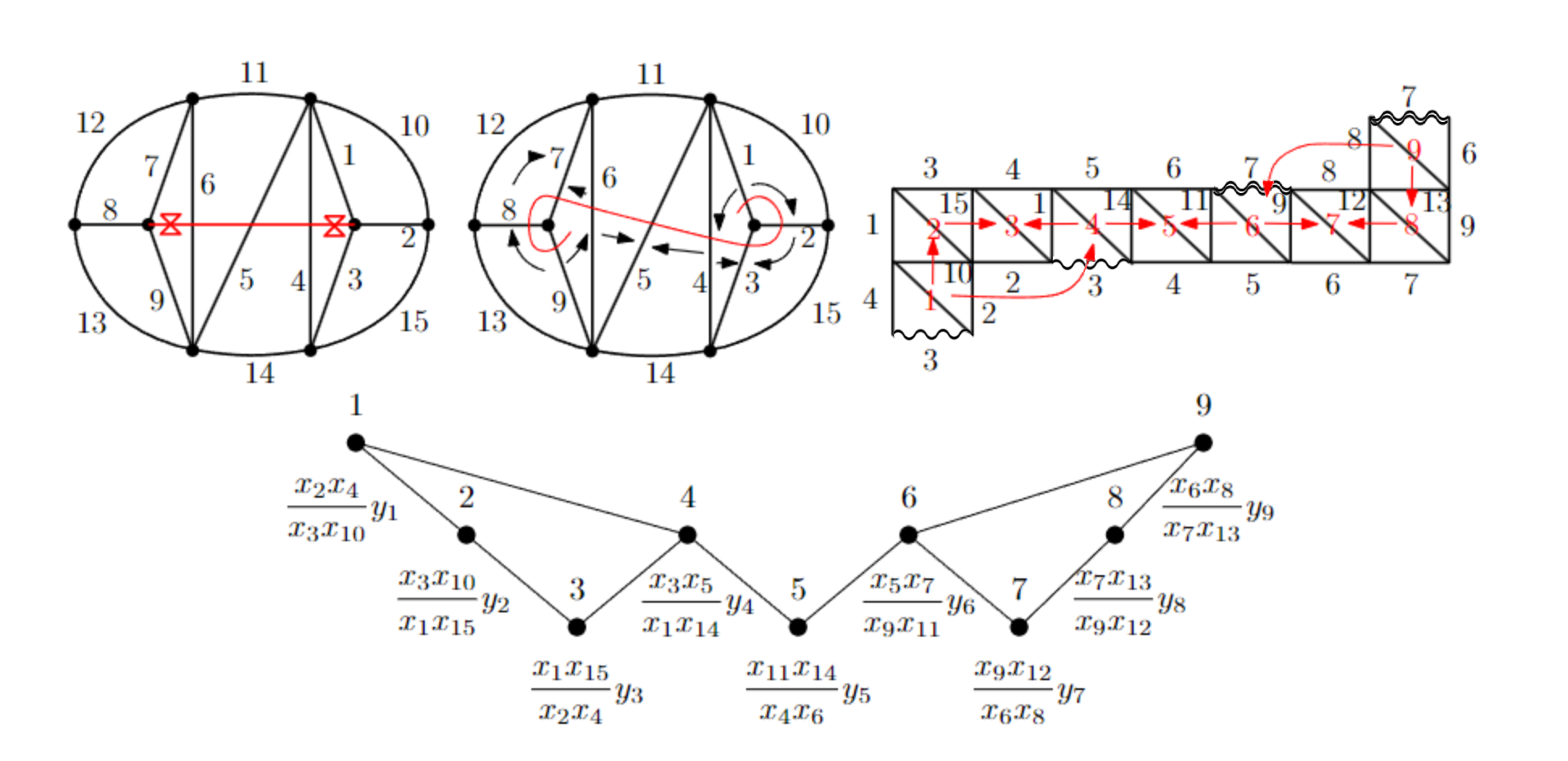}
\caption{An example calculation for a doubly notched arc, from \cite{wilson}, with its corresponding labeled poset.} \label{doubly2}
\end{figure}

The source loop is the one we calculated in Step 2 of the one loop case, with the corresponding matrix: 

$$S_2=\lloop_\nearrow \left( \mdo(w_1)\mdo(w_2)\mup(w_3)\mup(w_4) \right).$$

Let us know similarly calculate the target loop.

\noindent{\textbf{Step $3'$: }} Connect vertices $6$, $7$, $8$ and $9$ by up and down arrows. We use the downwards arrow for vertex $9$ because we will connect it to vertex $6$ by adding the relation $6\preceq 9$ later.

\begin{figure}[H]
    \centering \scalebox{.8}{
\begin{tikzpicture}
\draw[->, blue, dashed, thick] (6,0)--(7,-2/3);
\draw[->, blue, dashed, thick] (1.5,-1)--(2.5,-5/3);
\draw[->, blue, dashed, thick] (3,-2)--(4,-4/3);
\draw[->, blue, dashed, thick] (4.5,-1)--(5.5,-1/3);
\fill (6,0) circle(.2) node[white] {$9$} ;
\draw (6.1,-.5) node{$\displaystyle w_9$} ;%
\fill (1.5,-1) circle(.2) node[white] {$6$} ;
\draw (1.6,-1.5) node{$\displaystyle w_6$} ;
\fill (3,-2) circle(.2) node[white] {$7$} ;
\draw (3.1,-2.5) node{$\displaystyle w_7$} ;
\fill (4.5,-1) circle(.2) node[white] {$8$} ;
\draw (4.6,-1.5) node{$\displaystyle w_8$} ;
\fill[white] (6,0) circle(.2) ;
\fill[red] (6,0) circle(.1) ;
\draw[red] (6,0) circle(.2);
\fill[white] (1.5,-1) circle(.2) ;
\fill[red] (1.5,-1) circle(.1) ;
\draw[red] (1.5,-1) circle(.2);
\fill[white] (3,-2) circle(.2) ;
\fill[red] (3,-2) circle(.1) ;
\draw[red] (3,-2) circle(.2);
\fill[white] (4.5,-1) circle(.2) ;
\fill[red] (4.5,-1) circle(.1) ;
\draw[red] (4.5,-1) circle(.2);
\end{tikzpicture} \raisebox{15mm}{$\qquad$ \scalebox{2}{$\Rightarrow$} $\qquad$}
\begin{tikzpicture}
\draw (1.5,-1)-- (3,-2)--(6,0);
\draw[->, blue, dashed,thick] (6,0)--(7,-2/3);
\fill (4.5,-1) circle(.1) ;
\draw (6.1,-.5) node{$\displaystyle w_9$} ;%
\fill (1.5,-1) circle(.1) ;
\draw (1.6,-1.5) node{$\displaystyle w_6$} ;
\fill (3,-2) circle(.1) ;
\draw (3.1,-2.5) node{$\displaystyle w_7$} ;
\fill[blue] (6,0) circle(.15);
\draw (4.6,-1.5) node{$\displaystyle w_8$} ;
\fill[white] (1.5,-1) circle(.2) ;
\fill[red] (1.5,-1) circle(.1) ;
\draw[red] (1.5,-1) circle(.2);
\end{tikzpicture}}
\end{figure}

The correponding matrix is:

$$S_{3'}= \mdo(w_6)\mup(w_7)\mup(w_8)\mdo(w_9)= \mdom{w_6}\mupm{w_7}\mupm{w_8}\mdom{w_9}.$$

\noindent{\textbf{Step $4'`$: }} Connect vertices $6$ and $9$ using the $\rloop$ operation so that the connections remain on the target (left) side:
\begin{figure}[H]
    \centering \scalebox{.8}{
\begin{tikzpicture}
\draw (1.5,-1)-- (3,-2)--(6,0);
\draw[->, blue, dashed,thick] (6,0)--(7,-2/3);
\fill (4.5,-1) circle(.1) ;
\draw (6.1,-.5) node{$\displaystyle w_9$} ;%
\fill (1.5,-1) circle(.1) ;
\draw (1.4,-1.5) node{$\displaystyle w_6$} ;
\fill (3,-2) circle(.1) ;
\draw (3.1,-2.5) node{$\displaystyle w_7$} ;
\fill[blue] (6,0) circle(.15);
\draw (4.6,-1.5) node{$\displaystyle w_8$} ;
\fill[white] (1.5,-1) circle(.2) ;
\fill[red] (1.5,-1) circle(.1) ;
\draw[red] (1.5,-1) circle(.2);
\end{tikzpicture} \raisebox{15mm}{$\qquad$ \scalebox{2}{$\Rightarrow$} $\qquad$}
\begin{tikzpicture}
\draw (1.5,-1)-- (3,-2)--(6,0)--(1.5,-1);
\draw[->, blue, dashed,thick] (1.5,-1)--(2.1,-2);
\fill (4.5,-1) circle(.1) ;
\draw (6.1,-.5) node{$\displaystyle w_9$} ;%
\fill (1.5,-1) circle(.1) ;
\draw (1.6,-1.5) node{$\displaystyle w_6$} ;
\fill (3,-2) circle(.1) ;
\draw (3.1,-2.5) node{$\displaystyle w_7$} ;
\fill (6,0) circle(.1);
\draw (4.6,-1.5) node{$\displaystyle w_8$} ;
\fill[white] (1.5,-1) circle(.2) ;
\fill[red] (1.5,-1) circle(.1) ;
\draw[red] (1.5,-1) circle(.2);
\end{tikzpicture}}
\end{figure}

We get the following matrix:
$$S_{4'}=\rloop\left( \mdo(w_6)\mup(w_7)\mup(w_8)\mdo(w_9) \right)=\rloop\left( \mdom{w_6}\mupm{w_7}\mupm{w_8}\mdom{w_9} \right).$$

\noindent{\textbf{Step $5'$: }} Finally, connect the pieces together adding an up arrow for vertex $5$ in the middle to get the full poset.

\begin{figure}[H]
    \centering \scalebox{.8}{
\begin{tikzpicture}
\draw (0*.75,0)-- (3*.75,-2)--(4.5*.75,-1)--(0*.75,0);
\draw[->, blue, dashed, thick] (4.5*.75,-1)--(5.5*.75,-5/3);
\fill (0*.75,0) circle(.1) ;
\draw (.1*.75,-.5) node{$\displaystyle w_1$} ;%
\fill (1.5*.75,-1) circle(.1) ;
\draw (1.6*.75,-1.5) node{$\displaystyle w_2$} ;
\fill (3*.75,-2) circle(.1) ;
\draw (3.1*.75,-2.5) node{$\displaystyle w_3$} ;
\draw (4.6*.75,-1.5) node{$\displaystyle w_4$} ;
\fill[white] (4.5*.75,-1) circle(.2) ;
\fill[red] (4.5*.75,-1) circle(.1) ;
\draw[red] (4.5*.75,-1) circle(.2);
\draw (6.1*.75,-2.5) node{$\displaystyle w_5$} ;
\draw[->, blue, dashed, thick] (6*.75,-2)--(7*.75,-4/3);
\fill[white] (6*.75,-2) circle(.2) ;
\fill[red] (6*.75,-2) circle(.1) ;
\draw[red] (6*.75,-2) circle(.2);
\draw (7.5*.75,-1)-- (9*.75,-2)--(12*.75,0)--(7.5*.75,-1);
\draw[->, blue, dashed,thick] (7.5*.75,-1)--(8.1*.75,-2);
\fill (10.5*.75,-1) circle(.1) ;
\draw (12.1*.75,-.5) node{$\displaystyle w_9$} ;%
\fill (7.5*.75,-1) circle(.1) ;
\draw (7.6*.75,-.7) node{$\displaystyle w_6$} ;
\fill (9*.75,-2) circle(.1) ;
\draw (9.1*.75,-2.5) node{$\displaystyle w_7$} ;
\fill (12*.75,0) circle(.1);
\draw (10.6*.75,-1.5) node{$\displaystyle w_8$} ;
\fill[white] (7.5*.75,-1) circle(.2) ;
\fill[red] (7.5*.75,-1) circle(.1) ;
\draw[red] (7.5*.75,-1) circle(.2);
\end{tikzpicture}
 \raisebox{15mm}{$\,$ \scalebox{2}{$\Rightarrow$} $\,$}
\begin{tikzpicture}
\draw (0*.7,0)-- (3*.7,-2)--(4.5*.7,-1)--(0*.7,0);
\fill (0*.7,0) circle(.1) ;
\draw (.1*.7,-.5) node{$\displaystyle w_1$} ;%
\fill (1.5*.7,-1) circle(.1) ;
\draw (1.6*.7,-1.5) node{$\displaystyle w_2$} ;
\fill (3*.7,-2) circle(.1) ;
\draw (3.1*.7,-2.5) node{$\displaystyle w_3$} ;
\draw (4.6*.7,-1.5) node{$\displaystyle w_4$} ;
\draw (6.1*.7,-2.5) node{$\displaystyle w_5$} ;
\draw  (4.5*.7,-1)--(6*.7,-2) --(7.5*.7,-1);
\fill (6*.7,-2) circle(.1) ;
\draw (7.5*.7,-1)-- (9*.7,-2)--(12*.7,0)--(7.5*.7,-1);
\draw[->, blue, dashed,thick] (7.5*.7,-1)--(8.1*.7,-2);
\fill (10.5*.7,-1) circle(.1) ;
\draw (12.1*.7,-.5) node{$\displaystyle w_9$} ;%
\fill (7.5*.7,-1) circle(.1) ;
\draw (7.6*.7,-.7) node{$\displaystyle w_6$} ;
\fill (9*.7,-2) circle(.1) ;
\draw (9.1*.7,-2.5) node{$\displaystyle w_7$} ;
\fill (12*.7,0) circle(.1);
\draw (10.6*.7,-1.5) node{$\displaystyle w_8$} ;
\fill[blue] (7.5*.7,-1) circle(.15) ;
\fill[white] (4.5*.7,-1) circle(.2) ;
\fill[red] (4.5*.7,-1) circle(.1) ;
\draw[red] (4.5*.7,-1) circle(.2);
\end{tikzpicture}}
\end{figure}

\begin{align*}
S_{5'}=S_2\cdot \mup(w_5) \cdot S_{4'} = \lloop_\nearrow \left( \mdo(w_1)\mdo(w_2)\mup(w_3)\mup(w_4) \right)\cdot \mup(w_5) \cdot \rloop\left( \mdo(w_6)\mup(w_7)\mup(w_8)\mdo(w_9) \right).
\end{align*}

The upper left entry of the matrix gives us the following rank polynomial

\begin{dmath*}
\rank(P_\gamma;w)=
1 + w_3 + w_5 + w_7 + w_2w_3 + w_3w_5 + w_3w_7 + w_5w_7 + w_7w_8 + w_2w_3w_5 + w_3w_4w_5 + w_2w_3w_7 + w_3w_5w_7 + w_5w_6w_7 + w_3w_7w_8 + w_5w_7w_8 + w_2w_3w_4w_5 + w_2w_3w_5w_7 + w_3w_4w_5w_7 + w_3w_5w_6w_7 + w_2w_3w_7w_8 + w_3w_5w_7w_8 + w_5w_6w_7w_8 + w_1w_2w_3w_4w_5 + w_2w_3w_4w_5w_7 + w_2w_3w_5w_6w_7 + w_3w_4w_5w_6w_7 + w_2w_3w_5w_7w_8 + w_3w_4w_5w_7w_8 + w_3w_5w_6w_7w_8 + w_5w_6w_7w_8w_9  + w_1w_2w_3w_4w_5w_7 + w_2w_3w_4w_5w_6w_7 + w_2w_3w_4w_5w_7w_8 + w_2w_3w_5w_6w_7w_8 + w_3w_4w_5w_6w_7w_8 + w_3w_5w_6w_7w_8w_9  + w_1w_2w_3w_4w_5w_6w_7 + w_1w_2w_3w_4w_5w_7w_8 + w_2w_3w_4w_5w_6w_7w_8 + w_2w_3w_5w_6w_7w_8w_9 + w_3w_4w_5w_6w_7w_8w_9+ w_1w_2w_3w_4w_5w_6w_7w_8 + w_2w_3w_4w_5w_6w_7w_8w_9+w_1w_2w_3w_4w_5w_6w_7w_8w_9.
\end{dmath*}

 \end{example}



\bibliographystyle{alpha}
\bibliography{cluster}

\end{document}